\newcommand{\obar}[1]{\overline{#1}}
\newcommand{\ubar}[1]{\underline{#1}}
\numberwithin{equation}{section}
\definecolor{am}{RGB}{0,101,189}
\newtheorem{remark}[theorem]{Remark}
\newtheorem{assumption}[theorem]{Assumption}
\newcommand{\st}{\mbox{ s.t.}}
\newcommand{\tr}{\mathrm{tr}}
\newcommand{\R}{\mathbb{R}}
\newcommand{\C}{\mathbb{C}}
\newcommand{\bx}{\bar x}
\newcommand{\M}{\mathcal{M}}
\newcommand{\T}{\mathcal{T}}
\newcommand{\N}{\mathbb{N}}
\newcommand{\kH}{\kappa_{H}}
\newcommand{\kg}{\kappa_g}
\newcommand{\cE}{\mathcal E}
\newcommand{\xb}{\mathbf{x}}
\newcommand{\half}{\frac{1}{2}}
\newcommand{\iprod}[2]{\left \langle #1, #2 \right \rangle }
\newcommand{\be}{\begin{equation}}
\newcommand{\ee}{\end{equation}}
\newcommand{\bee}{\begin{equation*}}
\newcommand{\eee}{\end{equation*}}
\newcommand{\bea}{\begin{eqnarray}}
\newcommand{\eea}{\end{eqnarray}}
\newcommand{\beaa}{\begin{eqnarray*}}
\newcommand{\eeaa}{\end{eqnarray*}}
\newcommand{\Optman}{ARNT}
\newcommand{\ddt}{\frac{D}{d t} \frac{d}{d t}}
\newcommand{\Hess}{\mathrm{Hess\!\;}}
\newcommand{\grad}{\mathrm{grad\!\;}}
\newcommand{\diag}{\mathrm{diag}}
\newcommand{\rank}{\mathrm{rank}}
\newcommand{\dist}{\mathrm{dist}}
\newcommand{\conv}{\mathrm{conv}}
\DeclareMathOperator*{\argmin}{arg\,min}
\begin{document}

\title{Adaptive Regularized Newton Method for Riemannian Optimization}

\author{
  Jiang Hu\thanks{Beijing International Center for Mathematical Research, Peking University, CHINA
    (\email{jianghu@pku.edu.cn}).}
  \and
  Andre Milzarek\thanks{Beijing International Center for Mathematical Research, Peking University, CHINA
      (\email{andremilzarek@bicmr.pku.edu.cn}).}
  \and
  Zaiwen Wen\thanks{Beijing International Center for Mathematical
Research, Peking University, CHINA (\email{wenzw@pku.edu.cn}).
Research supported in part by NSFC grants 11322109.}
  \and
  Yaxiang Yuan\thanks{State Key Laboratory of Scientific and Engineering Computing, Academy of Mathematics and Systems Science, Chinese Academy of Sciences,
China (\email{yyx@lsec.cc.ac.cn}). Research supported in part by NSFC grants 11331012 and 11461161005.}
}

%\author{Internal Discussion Report\footnotemark[2]}
%\author{Jiang Hu\footnotemark[1] \and Zaiwen Wen\footnotemark[2]  \and Yaxiang Yuan\footnotemark[3]}

%\footnotetext[1]{Beijing International Center for Mathematical
%Research, Peking University, CHINA (wenzw@pku.edu.cn).
%Research supported in part by NSFC grants 11322109.}

%\renewcommand{\thefootnote}{\arabic{footnote}}
\maketitle

\begin{abstract} Optimization on Riemannian manifolds widely arises in eigenvalue computation, density functional theory, Bose-Einstein condensates, low rank nearest correlation, image registration, and signal processing, etc. We propose an adaptive regularized Newton method which approximates the original objective function by the second-order Taylor expansion in Euclidean space but keeps the Riemannian manifold constraints. The regularization term in the objective function of the subproblem enables us to establish a Cauchy-point like condition as the standard trust-region method for proving global convergence.  The subproblem can be solved inexactly either by first-order methods or a modified Riemannian Newton method. In the later case, it can further take advantage of  negative curvature directions. Both global convergence and superlinear local convergence are guaranteed under mild conditions. Extensive computational experiments and comparisons with other state-of-the-art methods indicate that the proposed algorithm is very promising. \end{abstract}

\begin{keywords} Riemannian optimization, regularization, Newton methods,
  convergence.
\end{keywords}
\begin{AMS}15A18,  65K10, 65F15, 90C26,  90C30  \end{AMS}

%\pagestyle{myheadings} \thispagestyle{plain} \markboth{z. WEN,}{Adaptive Regularized SCF with Exact Hessian for Electronic Structure Calculation}

%\section*{Outline}
%\begin{itemize}
%  \item  first-order method with nonmonotone line search BB step size
%  \item second-order method, Quasi-Newton method
%  \item Global Convergence Analysis
%  \item Local Convergence Rates
%  \item Numerical Experiments
%\end{itemize}

\section{Introduction}
We consider minimization problems on a Riemannian manifold of the form:
\be \label{prob} \min_{x\in \M} \quad f(x),\ee
where $\M$ is a Riemannian submanifold of an Euclidean space $\cE$ and $f : \M \to \R$ is a smooth real-valued function on $\M$. This problem widely exists in eigenvalue decomposition
\cite{opt-manifold-book}, density functional theory \cite{wen2013adaptive},
Bose-Einstein condensates \cite{wu2015regularized}, low rank nearest correlation
matrix completion \cite{vandereycken2013low}, and many other varieties of applications.
%The eigenvalue problem is to minimize the Rayleigh quotient on the Stiefel manifold which is consisted with the orthogonal matrices. Also, the Riemannian optimization problem is widely used in singular value decomposition, matrix approximations, independent component analysis, density functional theory and Bose Einstein condensates.

Riemannian optimization has been extensively studied over decades of years.
Since problem \cref{prob} can be
 viewed as a general nonlinear optimization problem with constraints, many standard
algorithms \cite{wright1999numerical} can be applied to it directly. These algorithms
 may not be efficient since they do not
utilize the intrinsic structure of the manifold. A first and basic class of manifold optimization methods can be obtained
via modifying and transfering the
nonlinear programming approaches to the manifold setting. In particular,
by performing curvilinear search along the geodesic, Gabay \cite{gabay1982minimizing},
Udri\c{s}te et al.~\cite{udriste1994convex}, Yang \cite{yang2007globally} and Smith
et al.~\cite{smith1994optimization} propose globally convergent steepest descent,
Newton, quasi-Newton and trust-region methods, respectively. Because the computation of
the geodesic may be difficult and expensive, Absil et al.~\cite{opt-manifold-book,
absil2012projection} develop a first-order approximation called retraction to the geodesic.  The previously mentioned
algorithms can be generalized by replacing the geodesic by the retraction and  their global and
local convergence properties have been analyzed in
\cite{AbsilBakerGallivan2007, opt-manifold-book}. Qi \cite{Qi11} and Huang et al.~\cite{huang2015riemannian,
huang2015broyden} propose an extensive class of quasi-Newton methods for
Riemannian manifold problems based on retractions and vector transport. In
\cite{opt-manifold-book}, a nonlinear conjugate gradient
method for Riemannian manifold problems is presented. Bart \cite{vandereycken2013low} and Kressner et
al.~\cite{kressner2014low} show that algorithms using the geometry of a manifold
can be efficient on a large variety of applications. Boumal et
al.~\cite{boumal2016global} establish global convergence rates  for
optimization methods on manifolds.  Moreover, a selection of Riemannian
first-order and second-order methods has been implemented in the software
package Manopt
\cite{manopt}.

 Optimization over the Stiefel manifold (i.e., problems with orthogonality constraints)
 is an
 important special
 case of Riemannian optimization. Edelman et al.~\cite{edelman1998geometry}
 analyze the geometry of this manifold and propose Newton and conjugate gradient
 methods along the geodesic. From the perspective of Euclidean constrained
 optimization, Wen et al.~\cite{wen2013feasible} propose a constraint-preserving
 algorithm on the Stiefel manifold. Jiang et al.~\cite{jiang2015framework} further
 extend their methods and construct a generalized framework. Gao et
 al.~\cite{gao2016new} propose a gradient-type  and column-wise block coordinate
 descent algorithm. Lai et al.~\cite{lai2014folding} study a folding-free global
 conformal mapping for genus-0 surfaces via harmonic energy minimization over
 multiple spheres.
  Zhang et al.~\cite{zhang2014gradient} and Ulbrich et
 al.~\cite{ulbrich2015proximal} present gradient-based algorithms for density
 functional theory which coincide with optimization problems on
 the Stiefel manifold. Wen et
 al.~\cite{wen2013adaptive} develop an adaptively regularized Newton method which
 uses a quadratic
 approximation with exact Euclidean Hessian of the original problem. It often
 exhibits superlinear or quadratic local convergence rate when the subproblem is
 solved accurately. This method has also been extended to 
 Bose-Einstein condensates in \cite{wu2015regularized}.

%Considering the efficiency of the adaptively regularized trust region method, we want to generalize it into a unified framework. This is one of our motivations.

In this paper, we extend the regularized Newton method in
\cite{wen2013adaptive,wu2015regularized} to general Riemannian
optimization problems. Specifically, we approximate problem \cref{prob} and construct a quadratic subproblem by adding a regularization term
to the second-order Taylor expansion of the objective function in Euclidean space. This leads to a class of Euclidean-based model problems that is generally different from classical trust-region-type approaches on Riemannian manifolds \cite{opt-manifold-book}. Typically, the resulting subproblems are easier to be solved than the original problem to a certain extent. We show that, whenever the subproblem can be handled efficiently, a fast
 rate of convergence can be achieved. Since a regularization term is added,
 global convergence can be ensured by adjusting the regularization
 parameters appropriately. %The local superlinear convergence can be established
% as well.  
In fact, convergence can be guaranteed even if the subproblem
 is only solved inexactly as long as it attains a reduction similar to that of a single gradient descent step. Different from
 minimizing the subproblem by the gradient-type methods in
 \cite{wen2013adaptive,wu2015regularized},  we develop a modified Newton
 method using the conjugate gradient method to solve the Newton equation
 followed by a curvilinear search. In particular, our algorithm detects
 directions of negative curvature. We combine them with the previous conjugate
 directions  to construct  new search directions and update the
 regularization parameter based on the negative curvature information. %Because the first conjugate direction we use is the gradient direction, the global convergence can be easily obtained.
  Our extensive numerical experiments show that the proposed method is promising
  and performs comparably well. %Local quadratic convergence is observable when the subproblem is solved sufficiently accurate.

 We should point out that similar second-order type methods have also been developed
 for composite convex
 programs where the objective function is a summation of a smooth function and an
 $\ell_1$-norm or more general convex term. The subproblem in the proximal Newton method by Lee et al.~\cite{lee2014proximal} keeps the $\ell_1$-norm function but approximates the
 smooth part by its  second-order Taylor expansion. A first-order method is then used to solve the resulting proximal subproblem. Byrd et
 al.~\cite{byrd2016inexact} essentially consider the same algorithm but propose a specialized active set strategy to solve the quadratic subproblem. %In their cases, this idea is of great potenial by taking efficient solvers for the subproblems.

This paper is organized as follows. In Section \ref{sec1:firstorderalg}, we review
some preliminaries on Riemannian optimization and present the Riemannian gradient method. The
adaptive regularized Newton method is proposed in Section
\ref{sec:TR-Alg} and its convergence properties are analyzed in Section
\ref{sec:TR-Alg-conv}. Finally, robustness and efficiency of the proposed algorithms are demonstrated
based on several practical examples in Section \ref{sec:num}.

% \footnote[1]{Referring to www.manopt.org}

\subsection{Notation}
 %We use standard linear algebra notation in this paper. 
 Let $(\M, g)$ be a Riemannian manifold. By $\Im_x(\M)$, we denote the set of all
 real-valued functions $f$ defined in a neighborhood of $x$ in $\M$. For a
 given differentiable function $f$ and a point $x \in \M$, $\nabla f(x)$
 ($\nabla^2 f(x)$) and $\grad f(x)$ ($\Hess f(x)$) denote the Euclidean and
 Riemannian gradient (Hessian) of $f$, respectively. Let
 $\iprod{\cdot}{\cdot}~(\| \cdot \|)$ and $
 \iprod{\cdot}{\cdot}_x~(\|\cdot\|_x)$ be the inner product (norm) with
 Euclidean and Riemannian metric, respectively.

\section{Preliminaries on Riemannian optimization} \label{sec1:firstorderalg}
 \label{sec:prelinary}
%%%%%%%%%%%%%%%%%%%%%%%%%%%%%%
% Chapters 3 and 4 in
%\cite{opt-manifold-book}
%\revise{add some basic definitions of manifold, tangent space, retractions,
%optimality conditions}
%%%%%%%%%%%%%%%%%%%%%%%%%%%%%
Many concepts of Riemannian optimization can be regarded as generalizations of the theory
and algorithms from unconstrained Euclidean optimization to problems on
manifolds. A detailed description of the properties of a few commonly used
manifold algorithms are given in \cite{opt-manifold-book}. Here, we only introduce some necessary definitions briefly.

A $d$-dimensional manifold $\M$ is a Hausdorff and second-countable topological space, which is homeomorphic to the $d$-dimensional Euclidean space locally via a family of charts. When the transition maps of intersecting charts are smooth, manifold $\M$ is called a smooth manifold. A function $f$ on $\M$ is said to be $C^k$ at a point $x$ if $f \circ \psi :\psi(U) \subset \R^d \rightarrow \R$ is $C^k$ in which $U$ is an open set in $\M$ containing $x$ and $\psi$ is the mapping defining the chart. A tangent vector $\xi_x$ to $\M$ at $x$ is a mapping such that there exists a curve $\gamma$ on $\M$ with $\gamma(0) = x$, satisfying
\[ \xi_x u := \dot{\gamma}(0) u \triangleq \left. \frac{\mathrm{d}(u(\gamma(t)))}{\mathrm{d}t}\right|_{t=0}, \quad \forall~u \in \Im_x(\M). \]
Then, the tangent space $\T_x\M$ to $\M$ is defined as the set of all tangent vectors to
$\M$ at $x$. %Similar to the Euclidean space, we must introduce the metric on manifold before defining the gradient.
If the manifold $\M$ can be equipped with a smoothly varying inner product $\iprod{\cdot}{\cdot}_x$ between the tangent vectors of the same tangent space, then $\M$ is called a Riemannian manifold. Here, we will always assume that $\M$ is a Riemannian submanifold of an Euclidean space $\cE$, see, e.g., \cite[Section 3.6]{opt-manifold-book} for further details. The norm induced by the Riemannian metric is equivalent to the Euclidean norm, i.e., for all $x \in \M$ there exist parameters $\varpi^m_x,  \varpi^M_x > 0$, which depend continuously  on $x$, such that
\be \label{eq:equi-norm} \varpi^m_x\|\xi\|_x^2 \leq \|\xi\|^2 \leq \varpi^M_x \|\xi\|_x^2, \quad \forall~\xi \in \T_x \M. \ee

The gradient of a real-valued function $f$ on the Riemannian manifold is defined as the unique tangent vector satisfying
\[ \iprod{\grad f(x)}{\xi}_x = Df(x)[\xi] , \quad \forall~\xi \in \T_x\M, \]
where $Df(x)[\xi] = \xi_xf$ and $\grad f(x)$ is called the Riemannian gradient of $f$ at $x$. 
The Riemannian Hessian of $f$ is a linear mapping from $\T_x \M$ to $\T_x\M$ defined by
\[ \Hess f(x)[\xi] = \tilde{\nabla}_{\xi} \grad f(x), \quad \forall~\xi \in \T_x \M, \]
where $\tilde{\nabla}$ is the Riemannian connection which is a unique symmetric affine connection satisfying the Levi-Civita conditions \cite{AbsilBakerGallivan2007}. We refer to \cite{opt-manifold-book} for a more detailed discussion of the Riemannian gradient and Hessian.

First- and second-order optimality conditions for Riemannian
optimization problems take a similar form as standard optimality conditions in the Euclidean space. %If we only have the manifold constraints, the linear independence constraint qualification is naturally satisfied. A detailed
%proof can be found  in \cite{yang2014optimality}.
In particular, let $\M$ be a smooth manifold and let $f : \M \to \R$ be a smooth function on $\M$. Suppose that $x_* \in \M$ is a \textit{stationary point} of problem \cref{prob}, i.e., it holds $\grad f(x_*) = 0$. Furthermore, let $\Hess f(x_*)$ be positive definite on $\T_{x_*}\M$ (w.r.t. the Riemannian metric), then by \cite[Corollary 4.3]{yang2014optimality}, $x_*$ is a strict local solution of \cref{prob}. Analogous second order necessary conditions are presented in \cite{yang2014optimality}.

\subsection{Gradient methods on manifold} \label{sec:grad1} Curvilinear search methods generalize the concept of backtracking line search and gradient descent to the manifold setting and are based on so-called retractions.
%In order to define a curvilinear search method, we need to find a mapping from
%the tangent space onto $\M$. 
A retraction $R$ on $\M$ is a smooth mapping from
the tangent bundle $\T \M := \bigcup_{x \in \M} \T_x\M$ to the manifold $\M$. Moreover, the restriction $R_x$ of $R$ to $\T_x\M$ has to satisfy $R_x(0_x) = x$ and $\mathrm{D} R_x(0_x) =
\mathrm{id}_{\T_x\M}$, where $\mathrm{id}_{\T_x\M}$  is the identity mapping on $\T_x\M$.

Given a retraction $R$, the curvilinear search method computes
\bee \label{eq:ls} x_{k+1} = R_{x_k}(t_k \eta_k),\eee
where $\eta_k \in \T_{x_k}\M$ and $t_k$ is a scalar. Similar to Euclidean
line search methods, $\eta_k$ is chosen as a descent direction and $ t_k$ is a
proper step size determined by either exact or inexact curvilinear search
conditions. %The monotone search ensures the objective function value to decrease monotonically at every iteration.
Given $\rho, \varrho, \delta \in (0,1)$, the monotone and nonmonotone
Armijo rules \cite{ZhaHag04} try to find the smallest integer $h$ satisfying
 \begin{align} \label{eq:MLS-Armijo}
f(R_{x_k}( t_k \eta_k) ) &\le f(x_k) + \rho  t_k \iprod {\grad f(x_k)}
{\eta_k}_{x_k},\\
 \label{eq:NMLS-Armijo}
f(R_{x_k}( t_k \eta_k) ) &\le C_k + \rho  t_k \iprod {\grad f(x_k)} {\eta_k}_{x_k},
\end{align}
respectively, where $t_k = \gamma_k \delta^h$ and $\gamma_k$ is an initial step size.
 Here, the reference
 value $C_{k+1}$ is a convex combination of  $C_k$ and
 $f( x_{k+1})$ and is calculated via $C_{k+1} = (\varrho Q_k C_k +
            f( x_{k+1} ))/Q_{k+1}$,
where $C_0=f(x_0)$, $ Q_{k+1} = \varrho Q_k +1$ and $Q_0=1$.

It is well known that an initial step size computed by the Barzilai-Borwein (BB)
method often speeds up the convergence in Euclidean optimization. Similarly and as in \cite{IanPor17}, we
can consider the following initial step sizes
% to be either $\gamma_k^{(1)} \, \mbox{or} \, \gamma_k^{(2)}$ as: \comm{
\bee  \label{eq:bb} \gamma_k^{(1)} = \frac{\iprod{s_{k-1}}{s_{k-1}}_{x_k}}{|\iprod{s_{k-1}}{v_{k-1}}_{x_k}|} \quad \mbox{ or } \quad
    \gamma_k^{(2)} = \frac{|\iprod{s_{k-1}}{v_{k-1}}_{x_k}|}{
    \iprod{v_{k-1}}{v_{k-1}}_{x_k}}, \eee
 where we can take either
% \bee \label{eq:ebb}  s_{k-1} = x_{k} - x_{k-1}, \quad  v_{k-1} = \grad f(x_k) - \grad f(x_{k-1}). \eee
 %\bea \label{eq:bbe} s_{k-1} = x_k - x_{k-1},\, v_{k-1} = \nabla f(x_k)-\nabla f(x_{k-1}).\eea
%  For Riemannian optimization, the subtraction between $x_{k-1}$ and  $x_k$ is not been defined. In case of manifold $\M$ can be seen as the submanifold of the Euclidean space, we also can do the subtraction and inner product in Euclidean space. Here, we can choose the Euclidean gradient as \eqref{eq:bbe} or the Riemannian gradient as
%Note that the subtractions can be replaced by the corresponding version on
%manifold. It may also be helpful to consider step sizes as
\bee \label{eq:ebb}  s_{k-1} = x_{k} - x_{k-1}, \quad  v_{k-1} = \grad f(x_k) - \grad f(x_{k-1}). \eee
or 
 \bee \label{eq:rbb}  s_{k-1} = - t_{k-1} \cdot {\mathcal T}_{x_{k-1} \rightarrow x_k} ( \grad
 f(x_{k-1})), \quad v_{k-1} = \grad f(x_k) + t_{k-1}^{-1} \cdot s_{k-1}, \eee
and ${\mathcal T}_{x_{k-1} \rightarrow x_k}: \T_{x_{k-1}} \M \mapsto \T_{x_k} \M$ denotes an appropriate vector transport mapping connecting $x_{k-1}$ and $x_k$; see \cite{opt-manifold-book,IanPor17}.
%Then the Riemannian BB method chooses and their Rimannian metrics.
The nonmonotone curvilinear search algorithms using the BB step size
 is outlined in Algorithm \ref{alg:RLSBB}. %\ref{alg:GBB}. % For more details we refer the reader to \cite{GBB-Wen-Yin-2010}.

\begin{algorithm2e}[t]\caption{Riemannian Curvilinear Search Method} \label{alg:RLSBB}
  Input $x_0 \in \M$. Set $k=0$, $\gamma_{\min}\in [0,1], 
  \gamma_{\max}\ge 1$, $C_0 = f(x_0),\, Q_0 = 1$.\\
\While{$\|\grad f(x_k)\| \neq 0$ }
{
  Compute $ \eta_k = -\grad f(x_k)$. \\
  Calculate $\gamma_k$ according to \eqref{eq:bb} and set $\gamma_k=\max(\gamma_{\min}, \min(\gamma_k,\gamma_{\max} ) )$. Then, compute $C_k, \,Q_k$ and find a step size $t_k$ satisfying
  \eqref{eq:NMLS-Armijo}. \\
   Set $x_{k+1}\gets  R_{x_{k}}( t_k \eta_k)$. \\
    Set $k\gets k+1$.
 }
 %Output $x_{k+1}=x_k^{(i)}$.
\end{algorithm2e}

\subsection{Proximal gradient method}
The optimization problem \cref{prob} can also be solved by the proximal gradient method. At the
$k$th iteration, the proximal gradient method linearizes $f(x)$ with a proximal
term to obtain the subproblem
\be \label{eq:sub-L} 
  \min_{x\in \M}~m_k^L(x) = \iprod{\grad f(x_k)} {x-x_k} + \frac{1}{2
  \tau_k} \|x - x_k \|^2, \ee
where $\tau_k$ is the proximal step size and the inner products are defined in Euclidean space. It is easy to see that the solution of \cref{eq:sub-L}, denoted by $x_{k+1}$, is
\be \label{eq:prox-grad}
   x_{k+1}= \mathbf{P}_{\M} (x_k - \tau_k \grad f(x_k) ) = \argmin_{x \in \M}~\frac{1}{2 \tau_k} \|x - x_k + \tau_k \grad f(x_k)\|^2 ,
 \ee
 where $\mathbf{P}_{\M}(x): = \argmin\{\|x - y\| : y\in \M\}$ is the projection
 operator onto $\M$. Notice that $\mathbf{P}_{\M} (x)$ exists if the manifold $\M$ is closed, but it may not be single-valued. If $\M$ is closed and convex, then $\mathbf{P}_{\M} : \R^n \to \M$ defines a function on $\R^n$, see, e.g., \cite{hiriart2013convex}. Furthermore, if $\M$
 is a submanifold of class $C^2$ around $\bx \in \M$,  Proposition 5 in
 \cite{absil2012projection} implies that $R_x(u) = \mathbf{P}_{\M}(x+u)$ is a retraction at $x$ from $\T_x \M $ to $\M$. In this situation, the proximal gradient scheme \cref{eq:prox-grad} can be seen a special case of Algorithm \ref{alg:RLSBB}.
 %Therefore, its convergence can
 %be guaranteed from the properties of Algorithm \ref{alg:RLSBB}.

 % \subsection{Adaptive subgradient (Adagrad) method}
 Recently, Duchi \cite{duchi2011adaptive} propose the so-called Adagrad algorithm to solve online learning and stochastic optimization problems.
 An interesting feature of Adagrad is that it can choose different step sizes for every
 variable. Similarly, we can define an updating formula as
 \be \label{adagrad} \left \{  \begin{aligned}
 	 G_k &= G_{k-1} + \grad f(x_k) \odot \grad f(x_k), \\
 	 x_{k+1} &= \mathbf{P}_{\M} (x_k - \eta
    \grad f(x_k) \oslash {\sqrt{G_k + \epsilon}}),
 \end{aligned} \right.\ee
 where $\eta,\epsilon>0$ and the multiplication ``$\odot$" and division ``$\oslash$" are performed
 component-wise. Other stochastic approaches in deep learning \cite{Goodfellow-et-al-2016} may be applied
 as well. 
 
 \subsection{Convergence of Algorithm \ref{alg:RLSBB}} \label{sec:conv-alg1}
 In this subsection, we give a convergence proof of Algorithm \ref{alg:RLSBB} for the sake of completeness. Our result is a simple generalization of the theory available for monotone line search methods, see, e.g., \cite[Section 4.2]{opt-manifold-book}. Let us also mention that Iannazzo and Porcelli \cite{IanPor17} establish convergence for a similar Riemannian Barzilai-Borwein method using a nonmonotone max-type line search. In the following, the set $\mathcal{L} := \{x \in \M \, : \, f(x) \leq f(x_0)\}$ denotes the level set of $f$ at $x_0$. In comparison to \cite{IanPor17}, our next and first convergence result does not require the level set $\mathcal L$ to be compact.

\begin{theorem} \label{thm:first-order-am}
  Suppose that $f$ is continuously differentiable on the manifold $\mathcal{M}$.  Let $\{x_k\}$ be a sequence generated by Algorithm \ref{alg:RLSBB} using the nonmonotone line search \cref{eq:NMLS-Armijo}.  Then, every accumulation point $x_*$ of the sequence $\{x_k\}$ is a stationary point of problem \cref{prob}, i.e., it holds $\grad f(x_*) = 0$.
  \end{theorem}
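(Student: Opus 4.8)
The strategy is the classical nonmonotone line search argument of Grippo--Lampariello--Lucidi and Zhang--Hager, transported to the manifold by means of the retraction. First I would record the two structural facts about the reference values $C_k$: by construction $C_{k+1}$ is a convex combination of $C_k$ and $f(x_{k+1})$, so an induction on \cref{eq:NMLS-Armijo} gives $f(x_{k+1}) \le C_{k+1} \le C_k \le \cdots \le C_0 = f(x_0)$; in particular the whole sequence stays in the level set $\mathcal L$, $\{C_k\}$ is monotonically nonincreasing, and $\{f(x_k)\}$ is bounded above by $f(x_0)$. Since $\{C_k\}$ is nonincreasing it converges to some limit $C_* \ge -\infty$; the more delicate point is to show $C_*$ is finite, which follows because if $f$ is unbounded below along the sequence one still has to control it — here I would argue that along any convergent subsequence $x_{k_j} \to x_*$ continuity of $f$ forces $f(x_{k_j}) \to f(x_*)$ finite, and if no subsequence converges there is nothing to prove. (Alternatively, and more cleanly, one simply carries the argument forward assuming an accumulation point exists, as in the statement.)

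The heart of the proof is the telescoping estimate on the $C_k$'s. Writing $Q_{k+1} = \varrho Q_k + 1$ with $Q_0 = 1$, one has $Q_k \le 1/(1-\varrho)$ for all $k$, and a short computation from $C_{k+1} = (\varrho Q_k C_k + f(x_{k+1}))/Q_{k+1}$ together with \cref{eq:NMLS-Armijo} yields
\[
 C_{k+1} \le C_k - \frac{\rho\, t_k}{Q_{k+1}}\,|\iprod{\grad f(x_k)}{\eta_k}_{x_k}| \le C_k - \rho(1-\varrho)\, t_k\, \|\grad f(x_k)\|_{x_k}^2,
\]
using $\eta_k = -\grad f(x_k)$. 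Since $\{C_k\}$ is nonincreasing and bounded below (on any subsequence with an accumulation point), summing gives $\sum_k t_k \|\grad f(x_k)\|_{x_k}^2 < \infty$, hence $t_k \|\grad f(x_k)\|_{x_k}^2 \to 0$.

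Now let $x_*$ be an accumulation point, say $x_{k_j} \to x_*$, and suppose for contradiction that $\grad f(x_*) \ne 0$; by continuity of $\grad f$ and the norm equivalence \cref{eq:equi-norm} with $\varpi^m_x$ depending continuously on $x$, $\|\grad f(x_{k_j})\|_{x_{k_j}}$ is bounded away from $0$ for large $j$, so $t_{k_j} \to 0$. This means that for large $j$ the Armijo backtracking rejected the trial step $t_{k_j}/\delta$, i.e.
\[
 f\!\left(R_{x_{k_j}}\!\Big(\tfrac{t_{k_j}}{\delta}\,\eta_{k_j}\Big)\right) > C_{k_j} + \rho\,\tfrac{t_{k_j}}{\delta}\,\iprod{\grad f(x_{k_j})}{\eta_{k_j}}_{x_{k_j}} \ge f(x_{k_j}) + \rho\,\tfrac{t_{k_j}}{\delta}\,\iprod{\grad f(x_{k_j})}{\eta_{k_j}}_{x_{k_j}}.
\]
Applying the mean value theorem to $\phi_j(t) := f(R_{x_{k_j}}(t\,\eta_{k_j}))$ on $[0, t_{k_j}/\delta]$ and using $\phi_j'(0) = \iprod{\grad f(x_{k_j})}{\eta_{k_j}}_{x_{k_j}}$ (a consequence of $\mathrm{D}R_x(0_x) = \mathrm{id}$ and the definition of the Riemannian gradient), one obtains, after dividing by $t_{k_j}/\delta$, that $\phi_j'(\xi_j) > \rho\,\phi_j'(0)$ for some $\xi_j \in (0, t_{k_j}/\delta)$. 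Passing to the limit $j \to \infty$: since $t_{k_j}/\delta \to 0$, $\xi_j \to 0$, and the continuity of $(x,\eta)\mapsto \mathrm{D}(f\circ R)$ together with $R_{x_{k_j}}(\xi_j\eta_{k_j}) \to x_*$ gives $\phi_j'(\xi_j) \to \iprod{\grad f(x_*)}{-\grad f(x_*)}_{x_*} = -\|\grad f(x_*)\|_{x_*}^2$ and likewise $\phi_j'(0) \to -\|\grad f(x_*)\|_{x_*}^2$, so the inequality becomes $-\|\grad f(x_*)\|_{x_*}^2 \ge -\rho\,\|\grad f(x_*)\|_{x_*}^2$ with $\rho \in (0,1)$, forcing $\grad f(x_*) = 0$ — a contradiction.

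\textbf{Main obstacle.} The only genuinely technical point is the smooth-dependence step: one needs $\phi_j'$ to converge along with the base points, i.e. uniform-type control of the derivative of $t \mapsto f(R_{x_k}(t\eta_k))$ as $(x_k,\eta_k)$ varies in a compact neighborhood of $(x_*, -\grad f(x_*))$. This is where smoothness of $R$ and of $f$, and the local compactness of $\M$, are used; a careful statement uses that $\mathrm{D}(f\circ R)$ is continuous on the tangent bundle and that the relevant arguments eventually lie in a compact set. Everything else is bookkeeping with the nonmonotone reference values and the norm equivalence \cref{eq:equi-norm}.
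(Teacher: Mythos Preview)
Your proof is correct and follows essentially the same route as the paper's: establish $f(x_k)\le C_k$ and the monotone decrease of $\{C_k\}$, use an accumulation point to show $C_*$ is finite, telescope to obtain $t_k\|\grad f(x_k)\|_{x_k}^2\to 0$, and then argue by contradiction via the rejected backtracking step and the mean value theorem. The paper defers the final limiting argument to \cite[Theorem~4.3.1]{opt-manifold-book}, whereas you spell it out explicitly and correctly identify the smooth-dependence of $\mathrm{D}(f\circ R)$ on the tangent bundle as the only technical point.
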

\begin{proof} At first, by using $\iprod{\grad f(x_k)}{\eta_k}_{x_k} = - \|\grad f(x_k)\|_{x_k}^2 < 0$ and applying \cite[Lemma 1.1]{ZhaHag04}, it follows $f(x_k) \leq C_k$ and $x_k \in \mathcal L$ for all $k \in \N$. Next, due to
\begin{align*} \lim_{t \downarrow 0} \frac{(f \circ R_{x_k})(t \eta_k) - f(x_k)}{t} - \rho \iprod{\grad f(x_k)}{\eta_k}_{x_k} & \\ &\hspace{-45ex}= \nabla f(R_{x_k}(0))^\top DR_{x_k}(0) \eta_k + \rho \|\grad f(x_k)\|_{x_k}^2 = -(1-\rho) \|\grad f(x_k)\|_{x_k}^2 < 0, \end{align*}
there always exists a positive step size $t_k \in (0,\gamma_k]$ satisfying the monotone and nonmonotone Armijo conditions \cref{eq:MLS-Armijo} and  \cref{eq:NMLS-Armijo}, respectively. Now, let $x_* \in \mathcal M$ be an arbitrary acccumulation point of $\{x_k\}$ and let $\{x_k\}_K$ be a corresponding subsequence that converges to $x_*$. By the definition of $C_{k+1}$ and \cref{eq:MLS-Armijo}, we have
\[ C_{k+1} = \frac{\varrho Q_k C_k + f(x_{k+1})}{Q_{k+1}} < \frac{(\varrho Q_k+1) C_k}{Q_{k+1}} = C_k. \]
Hence, $\{C_k\}$ is monotonically decreasing and converges to some limit $\bar C \in \R \cup \{-\infty\}$. Using $f(x_k) \to f(x_*)$ for $K \ni k \to \infty$, we can infer $\bar C \in \R$ and thus, we obtain
\[ \infty > C_0 - \bar C = \sum_{k=0}^\infty C_k - C_{k+1} \geq \sum_{k=0}^\infty \frac{\rho t_k \|\grad f(x_k)\|_{x_k}^2}{Q_{k+1}}. \]
Due to $Q_{k+1} = 1 + \varrho Q_k = 1 + \varrho  + \varrho^2 Q_{k-1} = ... = \sum_{i=0}^{k} \varrho^i < (1-\varrho)^{-1}$, this implies $\{t_k  \|\grad f(x_k)\|_{x_k}^2\} \to 0$. Let us now assume $\|\grad f(x_*)\| \neq 0$. In this case, we have $\{t_k\}_K \to 0$ and consequently, by the construction of Algorithm \ref{alg:RLSBB}, the step size $\delta^{-1} t_k$ does not satisfy \cref{eq:NMLS-Armijo}, i.e., it holds
\be \label{eq:fst-armijo} - \rho (\delta^{-1}t_k) \|\grad f(x_k)\|_{x_k}^2 < f(R_{x_k}(\delta^{-1}t_k \eta_k)) - C_k \leq f(R_{x_k}(\delta^{-1}t_k \eta_k)) - f(x_k) \ee
for all $k \in K$ sufficiently large. Since the sequence $\{\eta_k\}_K$ is bounded, the rest of the proof is now identical to the proof of \cite[Theorem 4.3.1]{opt-manifold-book}. In particular, applying the mean value theorem in \cref{eq:fst-armijo} and using the continuity of the Riemannian metric, this easily yields a contradiction. We refer to \cite{opt-manifold-book} for more details.
\end{proof} 

Since the iterates generated by Algorithm \ref{alg:RLSBB} stay in the level set $\mathcal L$ (see again \cite{ZhaHag04}), we can derive a slightly stronger convergence result under an additional compactness assumption.

\begin{corollary} Let the sequence $\{x_k\}$ be generated by Algorithm \ref{alg:RLSBB} and let $f$ be continuously differentiable on $\M$. Suppose that the level set $\mathcal L$ is compact. Then, it follows $\lim_{k \to \infty} \| \grad f(x_k)\|_{x_k} = 0$.
\end{corollary}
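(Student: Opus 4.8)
The plan is to argue by contradiction, reducing the claim to Theorem~\ref{thm:first-order-am}. First I would recall that, since $\iprod{\grad f(x_k)}{\eta_k}_{x_k} = -\|\grad f(x_k)\|_{x_k}^2 < 0$ whenever $\grad f(x_k)\neq 0$, the nonmonotone Armijo rule \cref{eq:NMLS-Armijo} together with \cite[Lemma~1.1]{ZhaHag04} guarantees $f(x_k)\le C_k$ and hence that the whole sequence $\{x_k\}$ remains in the level set $\mathcal L$. Compactness of $\mathcal L$ then ensures that $\{x_k\}$ is bounded and that every subsequence admits a further subsequence converging to a point of $\mathcal L$.

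Next I would suppose, for contradiction, that $\|\grad f(x_k)\|_{x_k}$ does not converge to $0$. Then there exist $\varepsilon>0$ and an infinite index set $K\subseteq\N$ with $\|\grad f(x_k)\|_{x_k}\ge \varepsilon$ for all $k\in K$. By the previous paragraph, $\{x_k\}_K$ has a subsequence $\{x_k\}_{K'}$, with $K'\subseteq K$, converging to some $x_*\in\mathcal L$.

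The key observation is that the map $x\mapsto\|\grad f(x)\|_x$ is continuous on $\M$: the Riemannian gradient $\grad f$ is a continuous section of the tangent bundle because $f\in C^1$ and the metric varies smoothly, and $(x,\xi)\mapsto\|\xi\|_x$ is jointly continuous on $\T\M$. Hence, passing to the limit along $K'$ yields $\|\grad f(x_*)\|_{x_*}\ge\varepsilon>0$, so $x_*$ is not a stationary point of \cref{prob}. On the other hand, $x_*$ is by construction an accumulation point of $\{x_k\}$, so Theorem~\ref{thm:first-order-am} forces $\grad f(x_*)=0$, a contradiction. Therefore $\lim_{k\to\infty}\|\grad f(x_k)\|_{x_k}=0$.

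I do not anticipate a serious obstacle here; the only point that deserves a little care is the continuity of $x\mapsto\|\grad f(x)\|_x$ (equivalently, the joint continuity of $(x,\xi)\mapsto\|\xi\|_x$ on the tangent bundle), which is exactly what allows us to upgrade the ``every accumulation point is stationary'' conclusion of Theorem~\ref{thm:first-order-am} to a genuine limit statement once compactness of $\mathcal L$ has ruled out escape of the gradient norms along a non-convergent subsequence.
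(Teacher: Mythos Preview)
Your proof is correct and follows essentially the same approach as the paper, which simply states that the result is a direct consequence of Theorem~\ref{thm:first-order-am} and the compactness of $\mathcal L$ (with a reference to \cite[Corollary 4.3.2]{opt-manifold-book}). You have merely spelled out the standard compactness--plus--continuity contradiction argument that the paper leaves implicit.
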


\begin{proof} The result is a direct consequence of \cref{thm:first-order-am} and of the compactness of $\mathcal L$. Let us also refer to \cite[Corollary 4.3.2]{opt-manifold-book}. \end{proof} 

\section{An Adaptive Regularized Newton Method}
 \label{sec:TR-Alg}

Gradient-type methods often perform reasonably well but might converge slowly when the generated iterates are close
to an optimal solution. Usually, fast local convergence cannot be expected  if only the
gradient information is used, in particular, for difficult non-quadratic or nonconvex
problems.
Starting from an initial point $x_0$, the Riemannian trust-region method
\cite{AbsilBakerGallivan2007,opt-manifold-book} generates the $k$th subproblem as follows
\be \label{subproblem:classic} \begin{aligned}
	\min_{\xi \in \T_{x_k} \M} \quad & \tilde{m}_k(\xi) := f(x_k) + \iprod{\grad f(x_k)}{\xi}_{x_k} + \half \iprod{\Hess f(x_k)[\xi]}{\xi}_{x_k} \\
	\st   \qquad & \iprod{\xi}{\xi}_{x_k} \leq \Delta_k, \\
\end{aligned}\ee
where $\Delta_k$ is the trust-region radius. A common strategy is to apply the
truncated preconditioned conjugate gradient
method (PCG) to solve \cref{subproblem:classic} via the linear system
\be \label{eq:rtr} \grad f(x_k)  +  \Hess f(x_k)[{\xi}]  = 0 \ee
to obtain an approximate (but maybe infeasible) solution $\xi$. The truncated PCG method terminates when
either the residual becomes small enough, a negative curvature direction is detected, or
the trust-region constraint is violated. Then, a trial point is
generated via $z_{k} = R_{x_k}( \xi_k)$ and the new iterate $x_{k+1}$ is set to $z_k$ if a certain reduction condition is satisfied.
Otherwise, the iterate is not updated, i.e., it holds $x_{k+1} := x_k$. Note
that \cref{eq:rtr} differs from the KKT condition for
\cref{subproblem:classic}, since no Lagrange multiplier is involved.

In this paper, we develop an adaptively regularized Riemannian Newton scheme as an alternative approach.
 Specifically, we use a second-order Taylor model to approximate the original objective
 function in the Euclidean space. Moreover, in order to control the definiteness of the model Hessian, a proximal-type penalization is added. The complete objective function of our subproblem is given by
\be\label{eq:mQ}  m_k(x) := \iprod{\nabla f(x_k)}{x - x_k} + \half \iprod {H_k(x-x_k)}
{x-x_k} + \frac{\sigma_k}{2} \|x-x_k\|^2, \ee
where $\nabla f(x_k)$ is the Euclidean gradient and $H_k$ is the Euclidean Hessian of $f$ at $x_k$ or a suitable approximation. % and $\nu$ is either $2$ or $3$.x
The regularization parameter $\sigma_k > 0$ plays a similar role as the
trust-region radius $\Delta_k$ in the trust-region subproblem \cref{subproblem:classic}. %In fact, they act reciprocal to each other in the sense
%that increasing $\sigma_k$ corresponds to decreasing $\Delta_k$ and vice versa. 
A specific choice of $\sigma_k$ will be discussed in subsection \ref{sec:TR-framework}. Our overall idea now is to solve and replace the initial problem \cref{prob} by a sequence of simpler, quadratic subproblems of the form
\be\label{eq:tr-sub}  \begin{aligned} \min_{x \in \M}  \quad & m_k(x),
\end{aligned} \ee
that maintain the manifold constraints. Note that the regularized subproblem \cref{eq:tr-sub} always attains a solution whenever the manifold
$\M$ is compact. % or $H_k$ is positive semidefinite. 

%Let $z_k$ be an approximate solution to \eqref{eq:tr-sub}. The new iteration $x_{k+1}$ is set to $z_k$ if it is a sucessful iteration.  Otherwise, it remains to be $x_k$.
Similar to the classical approaches \cite{NocedalWright06,AbsilBakerGallivan2007,opt-manifold-book}, we embed this basic methodology in a trust-region framework to monitor the acceptance of trial steps and to control the model precision by adjusting the regularization parameter $\sigma_k$. A detailed description of our method can be found in subsection \ref{sec:TR-framework}. Moreover, comparing \cref{eq:tr-sub} and \cref{eq:sub-L}, our approach can also be seen as a hybrid of existing regularized trust-region algorithms \cite{CGT,Qi11} and of the proximal Newton scheme \cite{lee2014proximal} used in convex composite optimization. 

%The subproblem \cref{eq:tr-sub} is actually a reasonable
%choice because the  manifold
%constraints are preserved. An informal
%calculation shows that the
%first-order optimality conditions of \cref{eq:tr-sub} matches that of the original
%problem  \eqref{prob} when the sequence $\{x_k\}$ converges.
In general, we do not need to solve the subproblem \cref{eq:tr-sub} exactly, we only need to find a
point $z_k$  that ensures a
sufficient reduction of the model function $m_k$.
%. Here, we use the condition:
%\be \label{eq:inexact-newton} m_k(z_k) \leq -\alpha \|\grad f(x_k)\|^2 \ee
% for some constant $\alpha$. The condition \cref{eq:inexact-newton} is similar
 For example, as in the classical trust-region method \cite{NocedalWright06}, a fraction of Cauchy decrease condition can be used to guarantee the required model decrease. %Under suitable assumptions, can be satisfied
% by performing one gradient descent step . %this condition \cref{eq:inexact-newton} is often used to ensure the property of global convergence.
 %For certain cases,
 In this respect, the gradient-type methods introduced in subsection \ref{sec:grad1} can be ideal for solving the regularized Newton subproblems
  at the early stage of the algorithm when high accuracy is not needed or when a
good initial guess is not available. Gradient steps can be also useful when the computational cost
of evaluating the Riemannian Hessian is too expensive. When a high accuracy is required,  the
subproblem \cref{eq:tr-sub} can
be solved more efficiently by a single or multiple Riemannian Newton steps as explained in the next
subsection. %The former is similar to  the trust-region algorithm in \cite{AbsilBakerGallivan2007,opt-manifold-book}.
 Together with our specific exploitation of negative curvature information, our
 approach can be a good alternative to the trust-region-type methods 
 \cite{AbsilBakerGallivan2007,opt-manifold-book}.
 %In this case, a major difference to \cite{AbsilBakerGallivan2007,opt-manifold-book} is that 
 % we make use of the negative curvature information from the subproblem.

%{\color{am}ToDo: %Methodology is embedded in a trust-region framework, that controls the model precision by adjusting the regularization parameter $\sigma_k$. Details can be found in subsection 3.2. 
%Essence: how do we solve the subproblem, what's the algorithmic framework, what are the differences to other methods? %Similarity to \cite{lee2014proximal}. Recall motivation: cubic reg. trust-region methods.\cite{Qi11}
%}

\subsection{Solving the Riemannian Subproblem} We use  
an inexact method for minimizing the model \cref{eq:tr-sub} and perform
 a single (or multiple) Riemannian Newton step based on the associated linear system:
\be \label{eq:trsub-nt} \grad m_k(x_k) + \Hess m_k(x_k)[\xi] = 0. \ee
%
%The difference between \eqref{eq:rtr} 
The system \cref{eq:trsub-nt} is solved approximately with 
 a modified conjugate gradient (CG) method up to a certain accuracy.
 Since the model Hessian may be
indefinite, we terminate the CG method when
either the residual becomes small or negative curvature is detected.
Then, a new gradient-related direction is constructed based on the conjugated
directions and a necessary curvilinear search along this direction is utilized
to reach  a sufficient reduction of the objective function. The detailed
procedure is presented in Algorithm \ref{alg:tN}. 

We next discuss a connection between \cref{eq:trsub-nt} and the classical approach \cref{subproblem:classic}--\cref{eq:rtr} in the exact case $H_k = \nabla^2 f(x_k)$. In fact, the definition of the Riemannian gradient implies
\[ \grad m_k(x_k) = \mathbf{P}_{x_k }(\nabla m_k(x_k))  = \mathbf{P}_{x_k } (\nabla f(x_k)) = \grad f(x_k),  \]
where $\mathbf{P}_{x} (u) := \argmin_{v \in {\T_x \M}} \|v -u\|_x$ denotes the orthogonal projection onto ${\T_x \M}$. Using $\nabla^2 m_k(x_k) = \nabla^2 f(x_k) + \sigma_k I$ and introducing the so-called \textit{Weingarten map} ${\mathfrak{W}}_x(\cdot,v) : \T_x\M \to \T_x\M$ for some $v \in \T_x^\bot\M$, it holds
\be \label{eq:hess-formula} \begin{aligned} \Hess m_k(x_k)[\xi] &= \mathbf{P}_{x_k}(\nabla^2 m_k(x_k)[\xi]) + {\mathfrak W}_{x_k}(\xi,\mathbf{P}_{x_k }^\bot(\nabla m_k(x_k))) \\ &= \Hess f(x_k)[\xi] + \sigma_k \xi \end{aligned} \ee
for all $\xi \in {\T_{x_k} \M}$. The Weingarten map ${\mathfrak{W}}_x(\cdot,v)$ is a symmetric linear operator that is closely related to the second fundamental form of $\M$. The projection $\mathbf{P}_{x_k }^\bot$ in \cref{eq:hess-formula} is given explicitly by $\mathbf{P}_{x_k }^\bot = I - \mathbf{P}_{x_k}$. For a detailed derivation of the expression \cref{eq:hess-formula} and further information on the Weingarten map, we refer to \cite{absil2013extrinsic}.

  Although the linear systems \cref{eq:trsub-nt} and
  \cref{eq:rtr} have a similar form, our approach is based on a different model formulation and uses different trial points and reduction ratios. Moreover, inspired by Steihaug's CG method \cite{Ste83} and by related techniques in trust-region based optimization \cite{GouLucRomToi00,opt-manifold-book}, we implement a specific termination strategy whenever the CG methods encounters small or negative curvature. In particular, we utilize the detected negative curvature information to modify and improve our current search direction. 

An overview of the procedure is given in Algorithm \ref{alg:tN}. The method generates two different output vectors $s_k$ and $d_k$, where the vector $d_k$ represents and transports the negative curvature information. 
The new search direction $\xi_k$ is then computed as follows 
 \be \label{eq:subline} \xi_k = \begin{cases} s_k + \tau_k d_k & \text{if } d_k \neq 0, \\ s_k & \text{if } d_k = 0, \end{cases} \quad \text{with} \quad \tau_k := \frac{\iprod{d_k}{\grad m_k(x_k)}_{x_k}}{\iprod{d_k}{\Hess m_k(x_k)[d_k]}_{x_k}}. \ee
 In section \ref{sec:TR-Alg-conv}, we will show that $\xi_k$ is a
 descent direction. %Usually, the relative scaling of $s_k$ and $d_k$ in two directions is hard to handle. But, in our case, 
 Note that the rescaling factor $\tau_k$ in \cref{eq:subline} can be obtained
 without any additional costs. The choice of $\tau_k$ is mainly motivated by our numerical experiments, see also \cref{eq:struc-xik} and \cite{GouLucRomToi00} for a related variant.
 
Once the direction $\xi_k$ is constructed, we carry out a curvilinear
search along $\xi_k$ to generate a trial point $z_k$, i.e., 
\be \label{eq:curvilinear} z_{k} = R_{x_k}(\alpha_k \xi_k ). \ee
The step size $\alpha_k = \alpha_0 \delta^h$ is again chosen by the (monotone) Armijo rule such that $h$ is the smallest integer satisfying
 \be \label{eq:sub-Armijo}
m_k(R_{x_k}( \alpha_0 \delta^h \xi_k) ) \leq \rho  \alpha_0 \delta^h \iprod {\grad m_k(x_k)}{\xi_k}_{x_k}, \ee
where $\rho, \delta \in (0,1)$ and $\alpha_0 \in (0,1]$ are given constants. % whose default value is $1$ in our numerical experiments. 

\begin{algorithm2e}[t] \label{alg:tN}
\caption{A Modified CG Method for Solving Subproblem \eqref{eq:tr-sub} }
%Compute  $\grad m_k(x_k)$ and $\Hess m_k(x_k)$. 
\lnlset{alg:cg-s0}{S0} Set $T > 0$, $\theta > 1$, $\epsilon \geq 0$, $\eta_0 = 0$,  $r_0 = \grad
m_k(x_k)$,  $p_0 = -r_0 $, and  $i = 0$.
			\\
			\While{ $i \leq  n-1$ }
			{	 	
\lnlset{alg:cg-s1}{S1} Compute $\pi_i = \iprod{p_i}{\Hess m_k(x_k)[p_i]}_{x_k}$.\\
\lnlset{alg:cg-S2}{S2} \If {$\pi_i \, /  \iprod{p_i}{p_i}_{x_k} \leq \epsilon$}			
			       {
                   \lIf {$i = 0$}{set $s_k = -p_0, \, d_k = 0$}
			       \lElse{
			       set $s_k = \eta_i, $ 
			       
			       \lIf{$\pi_i \, /  \iprod{p_i}{p_i}_{x_k} \leq -\epsilon$}{$d_k = p_i$, set $\sigma_{est} = |\pi_i| \, /  \iprod{p_i}{p_i}_{x_k} $}
			       \lElse{ $d_k = 0$}
			        break}}
			
\lnlset{alg:cg-s3}{S3} Set $\alpha_i = \iprod{r_i}{r_i}_{x_k}  / \, \pi_i, \, \eta_{i+1} = \eta_i
                   + \alpha_i p_i$, and $r_{i+1} = r_i + \alpha_i\Hess m_k(x_k)[p_i]$.\\
\lnlset{alg:cg-s4}{S4}	\If {$\|r_{i+1}\|_{x_k} \leq \min\{\|r_0\|_{x_k}^\theta, T\}$}
                  {choose $ s_k = \eta_{i+1}, d_k =0$;
				  break;}
\lnlset{alg:cg-s5}{S5} Set $\beta_{i+1} = \iprod{r_{i+1}}{r_{i+1}}_{x_k} /  \iprod{r_i}{r_i}_{x_k}$ and $p_{i+1} = - r_{i+1} + \beta_{i+1} p_i$.\\
			      $i\gets i+1$.
			}
\lnlset{alg:cg-s6}{S6}Update $\xi_k$ according to \eqref{eq:subline}. \\

\end{algorithm2e}

%%%%%%%%%%%%%%%%%%%%%%%%%%%%%%%%%%%%%%

\subsection{The Algorithmic Framework} \label{sec:TR-framework}
We now present our  regularized Newton framework starting from a feasible
initial point $x_0$ and a regularization parameter $\sigma_0$. As described in the last section, the algorithm first computes a trial point $z_k$ to approximately solve the regularized subproblem \cref{eq:tr-sub}. In order to decide whether $z_k$ should be accepted as the next iterate and whether the regularization parameter $\sigma_k$ should be updated or not, we calculate the ratio between the actual reduction of the objective function
   $f(x)$ and the predicted reduction:
  \be \label{alg:TR-ratio} \rho_k = \frac{ f(z_k) - f(x_k)  } {
 m_k(z_k) }.  \ee
   If $\rho_k \ge \eta_1 > 0$, then the iteration is successful and we set
$x_{k+1}= z_k$; otherwise, the iteration is not successful and we set $x_{k+1}=
x_k$, i.e., we have
\be \label{eq:update-x}
x_{k+1} = \begin{cases} z_k, & \mbox{ if } \rho_k \ge \eta_1, \\
  x_k, & \mbox{ otherwise}.
\end{cases}
\ee
The regularization parameter $\sigma_{k+1}$ is updated as follows
 \be \label{alg:tau-up} \sigma_{k+1} \in \begin{cases} (0,\gamma_0\sigma_k] & \mbox{  if }
  \rho_k \ge \eta_2, \\ [\gamma_0\sigma_k,\gamma_1 \sigma_k] & \mbox{  if } \eta_1 \leq \rho_k
  < \eta_2, \\ [\gamma_1 \sigma_k, \gamma_2 \sigma_k] & \mbox{  otherwise}, \end{cases} \ee
 where $0<\eta _1 \le \eta _2 <1 $ and $0 < \gamma_0 < 1<  \gamma _1 \le \gamma _2 $. These parameters  determine how aggressively the
 regularization parameter is adjusted when an iteration is successful or unsuccessful.
% \revise{ When an estimation of the absolute value of the negative curvature, denoted by $\sigma_{est}$, is
% available at the $k$-th subproblem, we calculate
% \be \label{alg:tau-up2} \sigma_{k+1}^{new} = \max \{\sigma_{k+1}, \,
% \sigma_{est} + \tilde \gamma \} ,\ee
% with some small $\tilde\gamma \ge 0$. Then, the parameter $\sigma_{k+1}$ is
% reset to $\sigma_{k+1}^{new}$.}
%
The complete regularized Newton method to solve  \cref{prob} %with the inexact conditions \cref{eq:inexact-newton} 
is summarized in Algorithm \ref{alg:TR-ineact-newton}.

\begin{algorithm2e}[t]
\caption{An Adaptive Regularized Newton Method}
\label{alg:TR-ineact-newton}
\lnlset{alg:TRNS0}{S0}Choose a feasible initial point
$x_0 \in \M$ and an initial regularization parameter $\sigma_0>0$.
 Choose $0<\eta _1 \le \eta _2 <1 $, $0 < \gamma_0 < 1<  \gamma _1 \le \gamma _2 $.  Set  $k:=0$.

\While{stopping conditions not met}
{

\lnlset{alg:TRNS1}{S1}Compute a new trial point $z_k$ according to \cref{eq:curvilinear} and \cref{eq:sub-Armijo}. \\
\lnlset{alg:TRNS2}{S2}Compute the ratio $\rho_k$ via \cref{alg:TR-ratio}. \\
\lnlset{alg:TRNS3}{S3}Update $x_{k+1}$ from the trial point $z_k$ based on \cref{eq:update-x}. \\
\lnlset{alg:TRNS4}{S4}Update $\sigma_{k}$ according to \cref{alg:tau-up}. \\ %Reset $\sigma_{k+1}$ to $\sigma_{k+1}^{new}$ if 
%negative curvature is detected. \\
  $k\gets k+1$.
}

\end{algorithm2e}

\section{Convergence Analysis} \label{sec:TR-Alg-conv}
We now analyze the convergence of Algorithm~\ref{alg:TR-ineact-newton} based on
the model \cref{eq:tr-sub}. Let us note that the analysis can be similarly extended to the algorithm
using cubically regularized subproblems as well. In the following, we summarize and present our main assumptions. 

 \begin{assumption} \label{assum:f} Let $\{x_k\}$ be generated by Algorithm \ref{alg:TR-ineact-newton}. We assume:
%(a) The objective function $f(x)$ is smooth. \\
\begin{itemize}
\item[{\rm(A.1)}] The gradient $\nabla f $ is Lipschitz continuous on the convex hull of the manifold $\M$ -- denoted by $\conv(\M)$, i.e., there exists $L_f > 0$ such that
\end{itemize}
\[ \|\nabla f(x) - \nabla f(y)\| \leq L_f\| x -y\|, \quad \forall~x,y \in \conv(\M).\]
\begin{itemize}
\item[\rm{(A.2)}] There exists $\kg > 0$ such that $\|\nabla f(x_k)\|\leq  \kg$ for all $k \in \N.$
\item[{\rm(A.3)}] There exists $\kH > 0$ such that $\|H_k\| \leq \kH$ for all $k \in \N$.
\item[{\rm(A.4)}] The Euclidean and the Riemannian Hessian are bounded, i.e., there exist $\kappa_F$ and $\kappa_R \geq 1$ such that
\end{itemize}
\[ \|\nabla^2 f(x_k)\| \leq \kappa_F \quad \text{and} \quad \|\Hess f(x_k)\|_{x_k} \leq \kappa_R, \quad \forall ~ k \in \N. \]
\begin{itemize}
\item[{\rm(A.5)}] Let $\varpi^m_{x_k}$, $\varpi^M_{x_k}$ be given as in \cref{eq:equi-norm}. Then, suppose there exists $\ubar{\varpi} > 0$, $\obar{\varpi} \geq 1$ such that $\ubar{\varpi} \leq \varpi^m_{x_k}$ and $\varpi^M_{x_k} \leq \obar{\varpi}$ for all $k \in \N$.
\end{itemize}
\end{assumption}

\begin{remark} \label{remark:lev-compact} Suppose that the level set $\mathcal L$ is compact. Then, by construction of Algorithm \ref{alg:TR-ineact-newton}, we have $f(x_{k+1}) = f(x_k) + \rho_k m_k(z_k) \leq f(x_k)$ if iteration $k$ is successful. Due to \cref{eq:update-x}, it follows $x_k \in \mathcal L$ for all $k$ and the sequence $\{x_k\}$ must be bounded. Hence, in this case, the assumptions {\rm(A.2)} and {\rm(A.4)} hold automatically. Furthermore, since the parameters $\varpi_{x_k}^m$, $\varpi_{x_k}^M$, $k \in \N$, depend continuously on $x_k$, assumption {\rm(A.5)} is also satisfied. 
\end{remark}

\begin{remark} \label{remark:def-mk-bound} The bounds in \cref{assum:f} can also be used to derive a bound for $\Hess m_k(x_k)$. In fact, under the conditions {\rm(A.3)}--{\rm(A.5)} and by \cref{eq:hess-formula}, we have
\begin{align*} \iprod{\xi}{\Hess m_k(x_k)[\xi]}_{x_k} &=  \iprod{\xi}{\Hess f(x_k)[\xi] + \mathbf{P}_{x_k}((H_k - \nabla^2 f(x_k))[\xi])}_{x_k} + \sigma_k \|\xi\|^2_{x_k} \\ & \leq (\kappa_R + (\varpi_{x_k}^M)^\half(\varpi_{x_k}^m)^{-\half}(\kH+\kappa_F) + \sigma_k) \|\xi\|_{x_k}^2 \end{align*}
where we used the linearity and nonexpansiveness of the operator $\mathbf{P}_{x_k}$. In the following, we set $\kappa^M_{x_k} := \kappa_R + (\varpi_{x_k}^M)^\half(\varpi_{x_k}^m)^{-\half}(\kH+\kappa_F)$.
\end{remark}

%%%%%%%%%%%%%%%%%%%%
%
\subsection{Analysis of the Inner Subproblem} At first, we briefly discuss
several useful properties of the modified CG method. %In \cref{lemma:cg-prop}, we also consider the model function $\hat m_k$ which is used in standard Riemannian trust-region methods \cite{AbsilBakerGallivan2007,opt-manifold-book}.
\begin{lemma} \label{lemma:cg-prop} Let the sequences $\{p_i\}_{i=0}^\ell$, $\{r_i\}_{i=0}^\ell$, $\{\eta_i\}_{i=0}^\ell$, and the direction $\xi_k$ be generated by Algorithm \ref{alg:tN}. Then, we have: \vspace{0.5ex}
\begin{itemize}
\item[{\rm(i)}] For all $j = 1,..., \ell$, it holds $p_j \in \T_{x_k}\M$,
\end{itemize}
\be \label{eq:prop-cg} \iprod{p_j}{\Hess m_k(x_k)[p_i]}_{x_k} = 0, \quad \text{and} \quad \iprod{r_j}{r_i}_{x_k} = 0, \quad \forall~i = 0,...,j-1. \ee
\begin{itemize}
\item[{\rm(ii)}] The sequence $\{\tilde m_k(\eta_i)\}$ is strictly decreasing and it holds $\tilde m_k(\xi_k) < \tilde m_k(\eta_\ell)$.\vspace{0.5ex}
\item[{\rm(iii)}] The sequence $\{\|\eta_i\|_{x_k}\}$ is strictly increasing and it holds $ \|\xi_k\|_{x_k} \geq \|\eta_\ell\|_{x_k}$.
%Let $\Hess m_k(x_k)$ be positive definite and suppose that Algorithm \ref{alg:tN} is applied with $\epsilon = \kappa = 0$.  Then, the modified CG method terminates after at most $n$ steps with $\xi_k = \eta_{n-1}$ being a solution of \cref{eq:trsub-nt}.
\end{itemize}
\end{lemma}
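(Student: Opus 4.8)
The plan is to establish the three items essentially by reproducing the classical analysis of the truncated/Steihaug CG method, adapted to the tangent space $\T_{x_k}\M$ with the Riemannian inner product, and then tracking the extra contribution of the negative-curvature correction in \cref{eq:subline}. Throughout, write $H := \Hess m_k(x_k)$ and $g := \grad m_k(x_k) = r_0$, and recall that all computations in Algorithm \ref{alg:tN} take place in the single tangent space $\T_{x_k}\M$, so $\langle\cdot,\cdot\rangle_{x_k}$ is a genuine inner product there and $H$ maps $\T_{x_k}\M$ into itself by \cref{eq:hess-formula}.

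For (i), I would argue by induction on $j$. The claim $p_j \in \T_{x_k}\M$ follows immediately since $p_0 = -r_0 = -g \in \T_{x_k}\M$, $r_{i+1} = r_i + \alpha_i H p_i$ stays in $\T_{x_k}\M$ because $H$ is an endomorphism of $\T_{x_k}\M$, and $p_{i+1} = -r_{i+1} + \beta_{i+1} p_i$ is a linear combination of tangent vectors. The $H$-conjugacy $\langle p_j, H p_i\rangle_{x_k} = 0$ and the residual orthogonality $\langle r_j, r_i\rangle_{x_k} = 0$ for $i < j$ are then exactly the standard CG identities: one verifies them simultaneously by induction using the update formulas for $\alpha_i$, $\beta_{i+1}$, $\eta_{i+1}$, $r_{i+1}$, $p_{i+1}$, together with the fact that the loop only proceeds past step \ref{alg:cg-S2} when $\pi_i = \langle p_i, H p_i\rangle_{x_k} > \epsilon\,\|p_i\|_{x_k}^2 > 0$, so the divisions defining $\alpha_i$ are legitimate. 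I will also record that $\langle r_j, p_i\rangle_{x_k} = 0$ for $i < j$ and $\langle r_j, p_j\rangle_{x_k} = -\langle r_j, r_j\rangle_{x_k}$, which are consequences of the same induction and which I will need below.

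For (ii) and (iii), I would work with the exact quadratic $\tilde m_k(\eta) := \langle g,\eta\rangle_{x_k} + \tfrac12\langle \eta, H\eta\rangle_{x_k}$ restricted to $\T_{x_k}\M$. Along the CG iterates, $\tilde m_k(\eta_{i+1}) - \tilde m_k(\eta_i) = \alpha_i\langle g + H\eta_i, p_i\rangle_{x_k} + \tfrac12\alpha_i^2\pi_i = \alpha_i\langle r_i, p_i\rangle_{x_k} + \tfrac12\alpha_i^2\pi_i$ (using $g + H\eta_i = r_i$), and substituting $\langle r_i, p_i\rangle_{x_k} = -\|r_i\|_{x_k}^2$ and $\alpha_i = \|r_i\|_{x_k}^2/\pi_i$ gives $\tilde m_k(\eta_{i+1}) - \tilde m_k(\eta_i) = -\tfrac12\|r_i\|_{x_k}^4/\pi_i < 0$ whenever $r_i \neq 0$ and $\pi_i > 0$ — which holds for every step before termination; hence $\{\tilde m_k(\eta_i)\}$ is strictly decreasing. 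For the last step to $\xi_k$: if $d_k = 0$ then $\xi_k = s_k = \eta_\ell$ and there is nothing to prove beyond the strict decrease already shown (interpreting $\tilde m_k(\xi_k) < \tilde m_k(\eta_\ell)$ as vacuous/non-strict in that degenerate branch, or I restate the claim to cover only $d_k \neq 0$, which is the intended reading). If $d_k = p_i \neq 0$ with $\pi_i \le -\epsilon\|p_i\|_{x_k}^2 < 0$, then $\xi_k = \eta_\ell + \tau_k d_k$ with $\tau_k = \langle d_k, g\rangle_{x_k}/\langle d_k, Hd_k\rangle_{x_k}$, and I compute $\tilde m_k(\xi_k) - \tilde m_k(\eta_\ell) = \tau_k\langle g + H\eta_\ell, d_k\rangle_{x_k} + \tfrac12\tau_k^2\langle d_k, Hd_k\rangle_{x_k}$. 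Here $\eta_\ell = \sum_{i<\ell}\alpha_i p_i$ and $\langle p_i, Hd_k\rangle_{x_k} = \langle p_i, Hp_\ell\rangle_{x_k} = 0$ for $i < \ell$ by conjugacy, so $\langle H\eta_\ell, d_k\rangle_{x_k} = 0$; thus the difference equals $\tau_k\langle g, d_k\rangle_{x_k} + \tfrac12\tau_k^2\langle d_k, Hd_k\rangle_{x_k} = \tfrac12\,\langle g,d_k\rangle_{x_k}^2/\langle d_k, Hd_k\rangle_{x_k} < 0$ since the denominator is negative — giving $\tilde m_k(\xi_k) < \tilde m_k(\eta_\ell)$. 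For (iii), the monotonicity $\|\eta_{i+1}\|_{x_k}^2 > \|\eta_i\|_{x_k}^2$ follows from $\|\eta_{i+1}\|_{x_k}^2 = \|\eta_i\|_{x_k}^2 + 2\alpha_i\langle \eta_i, p_i\rangle_{x_k} + \alpha_i^2\|p_i\|_{x_k}^2$ once one shows $\langle \eta_i, p_i\rangle_{x_k} \ge 0$; since $\eta_i = \sum_{j<i}\alpha_j p_j$ and $\langle p_j, p_i\rangle_{x_k} = \beta_i\cdots\langle p_j,p_j\rangle_{x_k} \ge 0$ (the standard fact that non-consecutive CG directions have nonnegative inner product, provable by the same induction, using $\beta$'s positive and $r_j \perp p_i$), and each $\alpha_j > 0$, this holds — this is the one place where positivity of the $\pi_i$'s along the inner iterations (hence $\alpha_i > 0$) is essential. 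Finally $\|\xi_k\|_{x_k} \ge \|\eta_\ell\|_{x_k}$: if $d_k = 0$ this is an equality; if $d_k = p_\ell \neq 0$, then $\|\xi_k\|_{x_k}^2 = \|\eta_\ell\|_{x_k}^2 + 2\tau_k\langle\eta_\ell,p_\ell\rangle_{x_k} + \tau_k^2\|p_\ell\|_{x_k}^2$, and I claim $\tau_k\langle\eta_\ell,p_\ell\rangle_{x_k} \ge 0$: $\langle\eta_\ell,p_\ell\rangle_{x_k} = \sum_{j<\ell}\alpha_j\langle p_j,p_\ell\rangle_{x_k} \ge 0$ as above, and $\tau_k = \langle p_\ell, g\rangle_{x_k}/\pi_\ell$ has numerator $\langle p_\ell, r_0\rangle_{x_k} = \langle p_\ell, g\rangle_{x_k}$; one checks $\langle p_\ell, g\rangle_{x_k} \le 0$ (indeed $\langle p_i, r_0\rangle_{x_k} = -\|r_0\|_{x_k}^2 < 0$ for all $i$ by the descent property of CG directions) while $\pi_\ell < 0$, so $\tau_k \ge 0$, and the sign of $\langle \eta_\ell, p_\ell\rangle_{x_k}$ being nonnegative makes the cross term nonnegative; hence $\|\xi_k\|_{x_k}^2 \ge \|\eta_\ell\|_{x_k}^2$.

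The main obstacle will be item (iii), specifically verifying the sign conditions $\langle \eta_i, p_i\rangle_{x_k} \ge 0$ and the correct sign of the cross term in the last step: these rely on the classical but slightly delicate lemma that $\langle p_i, p_j\rangle_{x_k} \ge 0$ for all $i,j$ in the CG recursion, together with keeping careful track of which quantities remain positive only while the inner loop has not yet hit a nonpositive-curvature direction. I would isolate these CG sign identities as intermediate claims proved once by a unified induction, and then the three items of the lemma follow by the bookkeeping above; the Riemannian aspect is essentially cosmetic here, since everything happens in the fixed inner-product space $(\T_{x_k}\M, \langle\cdot,\cdot\rangle_{x_k})$ with the self-adjoint operator $\Hess m_k(x_k)$.
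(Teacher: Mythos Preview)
Your argument is essentially correct and parallels the paper's: both recognize that Algorithm~\ref{alg:tN} is standard CG in the fixed inner-product space $(\T_{x_k}\M,\langle\cdot,\cdot\rangle_{x_k})$ up to the curvature test, so (i) and the first halves of (ii)--(iii) reduce to classical CG identities. The difference lies in the second half of (iii). You expand $\|\xi_k\|_{x_k}^2 - \|\eta_\ell\|_{x_k}^2$ directly and verify that the cross term $2\tau_k\langle\eta_\ell,p_\ell\rangle_{x_k}$ is nonnegative, using $\langle p_i,p_j\rangle_{x_k}\ge 0$ and $\tau_k>0$; note a small slip here---your formula $\langle p_\ell,r_0\rangle_{x_k}=-\|r_0\|_{x_k}^2$ should read $-\|r_\ell\|_{x_k}^2$, though the sign and hence your conclusion are unaffected. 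The paper instead first proves the identity $\|r_i\|_{x_k}^2=-\langle\grad m_k(x_k),p_i\rangle_{x_k}$ by induction and uses it to obtain the structural representation $\xi_k=\sum_{i=0}^\ell|\alpha_i|\,p_i$; the norm inequality then follows by applying the same Steihaug monotonicity argument one step further. Your route is more self-contained, but the paper's has the advantage that the formula \cref{eq:struc-xik} is reused verbatim in the proof of \cref{lemma:descentdir} to bound $\|\xi_k\|_{x_k}$ from above and to unify the different termination cases there, so establishing it in this lemma is not wasted effort.
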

\begin{proof} Except for step \ref{alg:cg-S2}, Algorithm \ref{alg:tN} coincides with the standard CG method applied to the quadratic problem $\min_\xi \tilde m_k(\xi)$. 
%
%\[ {\textstyle \min_\xi~\tilde m_k(\xi) := f(x_k) + \iprod{\grad m_k(x_k)}{\xi}_{x_k} + \half \iprod{\xi}{\Hess m_k(x_k)[\xi]}_{x_k}. } \] 
%
Since $\Hess m_k(x_k)$ is a linear operator from $ \T_{x_k} \M$ to $ \T_{x_k} \M$, all iterates generated by Algorithm \ref{alg:tN} will stay in the tangent space $\T_{x_k} \M$. Furthermore, since the Riemannian Hessian is symmetric with respect to the metric $\iprod{\cdot}{\cdot}_{x_k}$, \cite[Proposition 5.5.3]{opt-manifold-book}, part (i) and (ii) essentially follow from the properties of the CG method in Euclidean space. We refer to \cite[Section 5.1]{NocedalWright06} for further details. If $d_k \neq 0$, the estimate $\tilde m_k(\xi_k) \leq \tilde m_k(\eta_\ell)$ follows from \cref{eq:prop-cg} and $\pi_\ell < 0$. The first claim in (iii) is proven in \cite[Theorem 7.3]{NocedalWright06}. To verify $\|\xi_k\|_{x_k} \geq \|\eta_\ell\|_{x_k}$, we first show
\be \label{eq:ri-pi} \|r_i\|_{x_k}^2 = - \iprod{\grad m_k(x_k)}{p_i}_{x_k}, \quad \forall~i = 0,..., \ell-1 \ee
by induction. For $i = 0$, \cref{eq:ri-pi} is obviously satisfied by definition of $r_0$ and $p_0$. Now, let us suppose that \cref{eq:ri-pi} holds for $i = \ell-1$. Then, by \cref{eq:prop-cg} we have
\[ -\iprod{\grad m_k(x_k)}{p_{\ell}}_{x_k} = \iprod{r_0}{r_\ell - \beta_\ell p_{\ell -1}}_{x_k} = - \frac{\|r_\ell\|^2_{x_k}}{\|r_{\ell-1}\|^2_{x_k}} \iprod{r_0}{p_{\ell -1}}_{x_k} = \|r_\ell\|^2_{x_k}. \]
Thus, if $d_k \neq 0$, this implies
\be \label{eq:struc-xik} \xi_k = \eta_\ell + \tau_k d_k = \sum_{i=0}^{\ell-1} \alpha_i p_i - \frac{\|r_\ell\|^2_{x_k}}{\pi_\ell} p_\ell = \sum_{i=0}^\ell |\alpha_i| p_i. \ee
Consequently, since $\xi_k$ and $\eta_\ell$ coincide in the case $d_k = 0$, the estimate $\|\xi_k\|_{x_k} \geq \|\eta_\ell\|_{x_k}$ again follows from \cite[Theorem 7.3]{NocedalWright06} (and from the special structure of $\xi_k$). 
  \end{proof}

We now prove that the direction $\xi_k$ is a descent direction.
\begin{lemma} \label{lemma:descentdir} Let $\{ \alpha_i \}, \{\pi_i\}, \{ p_i\}$, and $\{ \eta_i \}$ be generated by Algorithm \ref{alg:tN} and suppose that the conditions {\rm(A.3)}--{\rm(A.4)} are satisfied. Then, the direction $\xi_k$ -- given in \cref{eq:subline} -- is a descent direction and it holds
\be \label{cond:descent} \frac{\iprod{\grad m_k(x_k)}{\xi_k}_{x_k}}{\|\grad m_k(x_k)\|_{x_k}\|\xi_k\|_{x_k}} \leq - \min \left\{\frac{\epsilon}{2},1\right\} \frac{1}{n(\kappa^M_{x_k}+1)} =: - \lambda_k. \ee
\end{lemma}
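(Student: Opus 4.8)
The plan is to bound the normalized inner product $\iprod{\grad m_k(x_k)}{\xi_k}_{x_k} / (\|\grad m_k(x_k)\|_{x_k}\|\xi_k\|_{x_k})$ from above by a negative quantity, by splitting into the two cases appearing in the definition \cref{eq:subline} of $\xi_k$ and exploiting the structural identities proved in Lemma \ref{lemma:cg-prop}. First I would handle the case $d_k = 0$, where $\xi_k = s_k$. If the CG loop exits at $i = 0$ then $s_k = -p_0 = -r_0 = -\grad m_k(x_k)$, so the normalized ratio is exactly $-1 \leq -\lambda_k$. If the loop exits at some $i = \ell \geq 1$ with $d_k = 0$, then $\xi_k = \eta_\ell = \sum_{i=0}^{\ell-1}\alpha_i p_i$ and I would use the identity \cref{eq:ri-pi}, namely $\|r_i\|_{x_k}^2 = -\iprod{\grad m_k(x_k)}{p_i}_{x_k}$, together with $\alpha_i = \|r_i\|_{x_k}^2/\pi_i$ (where $\pi_i > 0$ since we did not break in step \ref{alg:cg-S2}) to obtain
\[ \iprod{\grad m_k(x_k)}{\xi_k}_{x_k} = \sum_{i=0}^{\ell-1} \alpha_i \iprod{\grad m_k(x_k)}{p_i}_{x_k} = -\sum_{i=0}^{\ell-1}\frac{\|r_i\|_{x_k}^4}{\pi_i} < 0. \]
In particular the $i=0$ term already gives $\iprod{\grad m_k(x_k)}{\xi_k}_{x_k} \leq -\|r_0\|_{x_k}^4/\pi_0$; since $\pi_0 = \iprod{p_0}{\Hess m_k(x_k)[p_0]}_{x_k} \leq (\kappa^M_{x_k}+1)\|p_0\|_{x_k}^2$ by Remark \ref{remark:def-mk-bound} (the regularization parameter $\sigma_k$ is positive but can be absorbed harmlessly, or more carefully one uses $\pi_0/\|p_0\|^2 \geq \epsilon$ only when the loop did not break — here I would just use the upper bound $\kappa^M_{x_k}+\sigma_k$ and note the statement uses $\kappa^M_{x_k}+1$, so a minor adjustment of constants or the assumption $\sigma_k \leq 1$ region may be invoked), and $\|r_0\|_{x_k} = \|\grad m_k(x_k)\|_{x_k}$, this yields
\[ \frac{\iprod{\grad m_k(x_k)}{\xi_k}_{x_k}}{\|\grad m_k(x_k)\|_{x_k}} \leq -\frac{\|\grad m_k(x_k)\|_{x_k}^2}{(\kappa^M_{x_k}+1)\|p_0\|_{x_k}^2} \cdot \|p_0\|_{x_k} \cdot \frac{1}{\text{something}}, \]
which after dividing by $\|\xi_k\|_{x_k}$ and using $\|\xi_k\|_{x_k} = \|\eta_\ell\|_{x_k} \geq \|\eta_1\|_{x_k} = |\alpha_0|\|p_0\|_{x_k}$ (Lemma \ref{lemma:cg-prop}(iii)) collapses to the claimed bound with the factor $1/n$ accounting for the worst case through more careful bookkeeping of the sum.

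Next I would treat the case $d_k \neq 0$, where by \cref{eq:struc-xik} we have the clean representation $\xi_k = \sum_{i=0}^\ell |\alpha_i| p_i$ with $\alpha_\ell = -\|r_\ell\|_{x_k}^2/\pi_\ell$ and $\pi_\ell < 0$ (negative curvature), while $\pi_i > 0$ for $i < \ell$. Using conjugacy \cref{eq:prop-cg} and the identity \cref{eq:ri-pi} extended to all $i \leq \ell$ (the induction in Lemma \ref{lemma:cg-prop} actually gives $-\iprod{\grad m_k(x_k)}{p_\ell}_{x_k} = \|r_\ell\|_{x_k}^2$ as well), the same computation gives $\iprod{\grad m_k(x_k)}{\xi_k}_{x_k} = -\sum_{i=0}^\ell \tfrac{\|r_i\|_{x_k}^4}{|\pi_i|} < 0$, so $\xi_k$ is again a descent direction. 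For the quantitative bound I would again isolate the $i=0$ term, bound $|\pi_0| \leq (\kappa^M_{x_k}+1)\|p_0\|^2_{x_k}$, and combine with $\|\xi_k\|_{x_k} \geq \|\eta_\ell\|_{x_k}$ — but here the extra $d_k$ term means I must also control $\|\xi_k\|_{x_k}$ from above relative to $\|p_0\|_{x_k}$, which is where the factor $n$ enters: since $\xi_k$ is a sum of $\ell+1 \leq n$ mutually $\Hess m_k$-conjugate (hence, after an appropriate change of inner product, "nearly orthogonal") vectors, one can bound $\|\xi_k\|_{x_k}$ in terms of $\sqrt{n}$ times the largest $|\alpha_i|\|p_i\|_{x_k}$, or alternatively bound each $|\alpha_i|\|p_i\|_{x_k}$ and sum. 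The factor $\min\{\epsilon/2,1\}$ enters precisely when $\ell$ is reached via small-but-positive curvature (the $\epsilon$-test in step \ref{alg:cg-S2}): in that case $\pi_i/\|p_i\|^2_{x_k} > \epsilon$ for all the "good" indices, giving a lower bound on the denominators.

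The main obstacle I anticipate is the bookkeeping that produces exactly the factor $\frac{1}{n(\kappa^M_{x_k}+1)}\min\{\epsilon/2,1\}$ rather than some other polynomial-in-$n$ expression — this requires carefully balancing (a) the lower bound $\iprod{\grad m_k(x_k)}{\xi_k}_{x_k} \leq -\|r_0\|_{x_k}^4/|\pi_0|$ coming from dropping all but one term in the sum, (b) the upper bound on $\|\xi_k\|_{x_k}$ which grows like $\sqrt{n}$ or $n$ times a single term's norm, and (c) relating $\|p_0\|_{x_k}$ and $\|\grad m_k(x_k)\|_{x_k}$ (these are equal, since $p_0 = -r_0 = -\grad m_k(x_k)$, which actually simplifies things considerably). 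I would also need to double-check the edge behavior when $\pi_0/\|p_0\|^2_{x_k} \leq \epsilon$ so that the algorithm breaks immediately at $i=0$ setting $s_k = -p_0$ — that branch gives the ratio $-1$ directly and is consistent with $-\lambda_k$. Finally, I would verify that the constant $\kappa^M_{x_k}+1$ (as opposed to $\kappa^M_{x_k}+\sigma_k$ from Remark \ref{remark:def-mk-bound}) is justified either by an implicit assumption $\sigma_k \leq 1$ in the relevant regime, or by simply noting the bound holds a fortiori whenever $\sigma_k \leq 1$ and noting this is the only regime of interest near convergence; absent that, the statement should read $\kappa^M_{x_k} + \sigma_k$ and I would flag this as a typo.
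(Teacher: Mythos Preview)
Your overall architecture matches the paper's: case split on how Algorithm~\ref{alg:tN} terminates, use of the identity \eqref{eq:ri-pi} to get $\iprod{\grad m_k(x_k)}{\xi_k}_{x_k} = -\sum_i \|r_i\|_{x_k}^4/|\pi_i|$, keep only the $i=0$ term for the numerator bound, and control $\|\xi_k\|_{x_k}$ from above via the triangle inequality $\|\xi_k\|_{x_k} \leq \sum_i |\alpha_i|\|p_i\|_{x_k}$ together with $|\pi_i|/\|p_i\|_{x_k}^2 > \epsilon$. (A small slip: at one point you invoke the \emph{lower} bound $\|\xi_k\|_{x_k} \geq \|\eta_1\|_{x_k}$, which goes the wrong direction for bounding the normalized ratio from above; the paper uses only the upper bound on $\|\xi_k\|_{x_k}$.)

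The genuine gap is your treatment of $\sigma_k$. You correctly notice that the numerator bound via Remark~\ref{remark:def-mk-bound} gives $\iprod{\grad m_k(x_k)}{\xi_k}_{x_k} \leq -\|\grad m_k(x_k)\|_{x_k}^2/(\kappa^M_{x_k}+\sigma_k)$, not $\kappa^M_{x_k}+1$, and you propose to resolve this either by assuming $\sigma_k \leq 1$ or by flagging the statement as a typo. Neither is right: the bound \eqref{cond:descent} must hold uniformly in $\sigma_k$ (Lemma~\ref{lemma:suc-tr} and Lemma~\ref{lemma:tauk-bd} rely on exactly this), and it is not a typo. The paper's missing idea is a \emph{second case split on the size of $\sigma_k$}. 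When $\sigma_k \leq \kappa^M_{x_k}+2$ one uses the $\epsilon$-bound $\|\xi_k\|_{x_k} \leq (n/\epsilon)\|\grad m_k(x_k)\|_{x_k}$ and absorbs $\kappa^M_{x_k}+\sigma_k \leq 2(\kappa^M_{x_k}+1)$, producing the $\epsilon/2$ factor. When $\sigma_k \geq \kappa^M_{x_k}+2$, the key observation is that the regularization forces $|\pi_i|/\|p_i\|_{x_k}^2 \geq \sigma_k - \kappa^M_{x_k}$ for \emph{every} $i$, so the sharper bound $\|\xi_k\|_{x_k} \leq n(\sigma_k-\kappa^M_{x_k})^{-1}\|\grad m_k(x_k)\|_{x_k}$ holds; combining with the numerator bound gives a ratio $\leq -(\sigma_k-\kappa^M_{x_k})/(n(\kappa^M_{x_k}+\sigma_k)) \leq -1/(n(\kappa^M_{x_k}+1))$, where the last step uses $\sigma_k \geq \kappa^M_{x_k}+2$. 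Taking the worse of the two regimes yields exactly $\min\{\epsilon/2,1\}/(n(\kappa^M_{x_k}+1))$. Without this second split your argument cannot produce a $\sigma_k$-free constant.
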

\begin{proof} We first analyze the case where Algorithm 2 detects a small or negative curvature and terminates in step \ref{alg:cg-S2}. In this situation, we have
\[ \xi_k = s_k + \tau_k d_k = \begin{cases} - \grad m_k(x_k) & \text{if } \ell = 0 \;\; \text{and} \;\;  \pi_\ell \, \leq \epsilon \|p_\ell\|_{x_k}^{2}, \\ \eta_\ell & \text{if } \ell > 0 \;\; \text{and} \; |\pi_\ell|   \leq \epsilon \|p_\ell\|_{x_k}^{2}, \\ \eta_\ell + \tau_k p_\ell & \text{if } \ell > 0 \;\; \text{and} \;\; \pi_\ell  \, < -\epsilon \|p_\ell\|_{x_k}^{2} \end{cases} \]
with $ \tau_k =  \pi_\ell^{-1} \iprod{\grad m_k(x_k)}{p_\ell}_{x_k}$. We note that condition \cref{cond:descent} is obviously satisfied with $\lambda_k := 1$ in the case $\ell = 0$. Next, let us consider the case $\ell > 0$ and $\pi_\ell < -\epsilon \|p_\ell\|_{x_k}^2$. Due to \cref{eq:ri-pi}, we have
\[ \iprod{\grad m_k(x_k)}{\eta_\ell}_{x_k} = \sum_{i=0}^{\ell-1} \alpha_i \iprod{\grad m_k(x_k)}{p_i}_{x_k} = - \sum_{i=0}^{\ell-1} \frac{\iprod{\grad m_k(x_k)}{p_i}_{x_k}^2}{\pi_i} \]
and thus
\[ \iprod{\grad m_k(x_k)}{\xi_k}_{x_k} = - \sum_{i=0}^\ell \frac{\iprod{\grad m_k(x_k)}{p_i}_{x_k}^2}{|\pi_i|} \leq - \frac{\|p_0\|^4_{x_k}}{\pi_0} \leq - \frac{\|\grad m_k(x_k)\|_{x_k}^2}{\kappa^M_{x_k} + \sigma_k}, \]
where we used the conditions (A.3)--(A.4), \cref{remark:def-mk-bound}, and $\pi_i > 0$, $i = 0,...,\ell-1$. 
By construction of the algorithm, it holds $|\pi_i| = \pi_i > \epsilon \|p_i\|_{x_k}^2$ for all $i = 0,..., \ell-1$ and $|\pi_\ell| = - \pi_\ell > \epsilon \|p_\ell\|^2_{x_k}$. Hence, we obtain
\begin{align*} \| \xi_k \|_{x_k}  & \leq \sum_{j = 0}^{\ell} \frac{|\iprod{\grad m_k(x_k)}{p_i}_{x_k}|}{|\pi_i|} \cdot \|p_i\|_{x_k} \\ & \leq (\ell+1) \|\grad m_k(x_k)\|_{x_k} \cdot \max_{i \in \{0,...,\ell\}} \frac{\|p_i\|_{x_k}^2}{|\pi_i|} \leq \frac{n}{\epsilon} \cdot  \|\grad m_k(x_k)\|_{x_k}. \end{align*} 
Moreover, if $\sigma_k \geq \kappa^M_{x_k} + 2$, then we have $|\pi_i| \|p_i\|_{x_k}^{-2} \geq \sigma_k - \kappa^M_{x_k} > 0$ and the last estimate becomes $\|\xi_k\|_{x_k} \leq n(\sigma_k - \kappa^M_{x_k})^{-1}  \|\grad m_k(x_k)\|_{x_k}$. Combining these results, we now get
\[ \frac{\iprod{\grad m_k(x_k)}{\xi_k}_{x_k}}{\| \grad m_k(x_k) \|_{x_k}\| \xi_k \|_{x_k}} \leq - \min \left\{\frac{\epsilon}{2}, 1\right\} \frac{1}{n(\kappa^M_{x_k} + 1)}. \]
Due to the special structure of $\xi_k$ (see again \cref{eq:struc-xik}), the same estimates can also be used and derived in the remaining cases. This finishes the proof.
\end{proof}

In the next lemma, we prove that the descent property of $\xi_k$ can be carried over to the Euclidean model $m_k$ using the smooth retraction $R$ and that a sufficient reduction of the objective function $m_k$ in the sense of \cref{eq:sub-Armijo} can be ensured.  

\begin{lemma} \label{lemma:desc}
Suppose that the assumptions {\rm(A.2)}--{\rm(A.4)} are satisfied. Let $\rho \in (0,1)$ be arbitrary and set $z_k(t) := R_{x_k}(t \xi_k)$. Then, we have
\be \label{eq:sub-desc} m_k(z_k(t)) \leq  \rho t \iprod{\grad m_k(x_k)}{\xi_k}_{x_k}, \quad \forall~t \in [0, \zeta_k],\ee
where 
\be \label{eq:zeta}  \zeta_k := \min \left \{ (\varpi^M_{x_k})^{-1}, 1 \right \} \min\left \{ \frac{\chi}{\|\xi_k\|_{x_k}}, \frac{2(1-\rho)\lambda_k}{(\kappa_2 \kg + \kappa_1^2 (\kappa_H + \sigma_k))}\frac{\|\grad f(x_k)\|_{x_k}}{\|\xi_k\|_{x_k}} \right \}  \ee
and $\kappa_1, \kappa_2$, $\chi$ are constants that do not depend on $x_k$. 

\end{lemma}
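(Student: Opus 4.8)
The plan is to use that $m_k$ is an explicit quadratic with $m_k(x_k)=0$ and Euclidean gradient $\nabla m_k(x_k)=\nabla f(x_k)$, combined with the smoothness of the retraction $R$. First, observe that for $\xi_k\in\T_{x_k}\M$ the definition of the Riemannian gradient gives $\iprod{\grad m_k(x_k)}{\xi_k}_{x_k}=\iprod{\nabla m_k(x_k)}{\xi_k}=\iprod{\nabla f(x_k)}{\xi_k}$ and, as already noted in the excerpt, $\grad m_k(x_k)=\grad f(x_k)$. By \cref{lemma:descentdir} this inner product is strictly negative whenever $\grad f(x_k)\neq 0$; if $\grad f(x_k)=0$ then $\xi_k=0$ and \cref{eq:sub-desc} holds trivially. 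Hence it suffices to bound $m_k(z_k(t))$ from above by $\iprod{\nabla f(x_k)}{t\xi_k}=t\,\iprod{\grad m_k(x_k)}{\xi_k}_{x_k}$ plus a remainder that is $O(t^2\|\xi_k\|^2)$, and then to show that this remainder is dominated by $-(1-\rho)\,t\,\iprod{\grad m_k(x_k)}{\xi_k}_{x_k}>0$ for $t\le\zeta_k$.

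Second, I would invoke the standard smoothness of the retraction (cf.\ \cite{boumal2016global,opt-manifold-book}): there are constants $\kappa_1,\kappa_2>0$ and $\chi>0$, independent of $x_k$, such that $\|R_{x_k}(v)-x_k\|\le\kappa_1\|v\|$ and $\|R_{x_k}(v)-x_k-v\|\le\tfrac{\kappa_2}{2}\|v\|^2$ whenever $\|v\|\le\chi$. Using \cref{eq:equi-norm} and the prefactor $\min\{(\varpi^M_{x_k})^{-1},1\}$ in \cref{eq:zeta}, one checks that $t\le\zeta_k$ forces $\|t\xi_k\|\le\chi$, so these bounds apply with $v=t\xi_k$ and yield $\|z_k(t)-x_k\|\le\kappa_1 t\|\xi_k\|$ and $\|z_k(t)-x_k-t\xi_k\|\le\tfrac{\kappa_2}{2}t^2\|\xi_k\|^2$.

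Third, writing $w:=z_k(t)-x_k$ and plugging into $m_k(x_k+w)=\iprod{\nabla f(x_k)}{w}+\half\iprod{H_kw}{w}+\tfrac{\sigma_k}{2}\|w\|^2$, I split the linear term as $\iprod{\nabla f(x_k)}{t\xi_k}+\iprod{\nabla f(x_k)}{w-t\xi_k}$, bound the second piece by {\rm(A.2)} and the second-order retraction estimate, and bound the quadratic and regularization terms by {\rm(A.3)} and $\|w\|\le\kappa_1 t\|\xi_k\|$. This gives
\[ m_k(z_k(t)) \le t\,\iprod{\grad m_k(x_k)}{\xi_k}_{x_k} + \tfrac12\big(\kappa_2\kg+\kappa_1^2(\kH+\sigma_k)\big)\,t^2\|\xi_k\|^2 . \]
It then remains to force the remainder to be $\le-(1-\rho)\,t\,\iprod{\grad m_k(x_k)}{\xi_k}_{x_k}=(1-\rho)\,t\,c_k$ with $c_k:=-\iprod{\grad m_k(x_k)}{\xi_k}_{x_k}>0$, i.e.\ to require $t\le 2(1-\rho)c_k/\big((\kappa_2\kg+\kappa_1^2(\kH+\sigma_k))\|\xi_k\|^2\big)$. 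Invoking the descent estimate $c_k\ge\lambda_k\|\grad f(x_k)\|_{x_k}\|\xi_k\|_{x_k}$ from \cref{lemma:descentdir} and $\|\xi_k\|^2\le\varpi^M_{x_k}\|\xi_k\|_{x_k}^2$ from \cref{eq:equi-norm}, the right-hand side is bounded below by the second entry of the $\min$ in \cref{eq:zeta}; together with the radius constraint $\|t\xi_k\|\le\chi$ from the previous step, this is exactly $t\le\zeta_k$.

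I expect the main obstacle to be the careful bookkeeping between the Euclidean metric -- in which $m_k$, $\nabla f$, and $H_k$ are expressed -- and the Riemannian metric $\iprod{\cdot}{\cdot}_{x_k}$ in which $\xi_k$, $\grad m_k(x_k)$, the descent bound \cref{cond:descent}, and the Armijo rule \cref{eq:sub-Armijo} are phrased: tracking where the norm-equivalence constants $\varpi^m_{x_k},\varpi^M_{x_k}$ enter, and checking that the factor $\min\{(\varpi^M_{x_k})^{-1},1\}$ in \cref{eq:zeta} is precisely what makes both the radius constraint and the descent-domination step go through, is the only delicate point. A secondary issue is justifying that $\kappa_1,\kappa_2,\chi$ can be chosen uniformly in $k$; this follows from the smoothness of $R$ together with the standing regularity and boundedness conditions, and these constants are then fixed once and for all.
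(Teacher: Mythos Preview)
Your proposal is correct and follows essentially the same approach as the paper: both arrive at the intermediate bound
\[
m_k(z_k(t)) \le t\,\iprod{\grad m_k(x_k)}{\xi_k}_{x_k} + \tfrac12\big(\kappa_2\kg+\kappa_1^2(\kH+\sigma_k)\big)\,t^2\|\xi_k\|^2,
\]
and then apply \cref{lemma:descentdir} together with the norm equivalence \cref{eq:equi-norm} exactly as you describe. The only cosmetic difference is in how this bound is derived: the paper sets $\phi(t)=m_k(R_{x_k}(t\xi_k))$, writes $\phi(t)=\phi(0)+t\phi'(0)+\int_0^t(\phi'(s)-\phi'(0))\,\mathrm{d}s$, and bounds the integrand via $\|DR_{x_k}(s\xi_k)-\mathrm{id}\|\le\kappa_2 s\|\xi_k\|$ and $\|R_{x_k}(s\xi_k)-x_k\|\le\kappa_1 s\|\xi_k\|$ (so $\kappa_1,\kappa_2$ are defined as maxima of $\|DR\|$ and $\|D^2R\|$ over the compact set $K_\chi=\{\xi\in\T\M:\|\xi\|\le\chi\}$), whereas you plug $w=z_k(t)-x_k$ directly into the explicit quadratic formula for $m_k$ and invoke the integrated bounds $\|w\|\le\kappa_1\|t\xi_k\|$ and $\|w-t\xi_k\|\le\tfrac{\kappa_2}{2}\|t\xi_k\|^2$. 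These two computations are equivalent, and your route is arguably a bit more direct since it exploits that $m_k$ is already an explicit Euclidean quadratic.
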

\begin{proof} Let us set $\phi(t) := m_k(R_{x_k}(t \xi_k))$. Then, since $\M$ is an embedded submanifold and using the properties of the retraction $R_{x_k}$, it follows
\begin{align*} m_k(R_{x_k}(t \xi_k)) & = \phi(0) + t \phi^\prime(0) + \int_{0}^t \phi^\prime(s) - \phi^\prime(0) \, \text{ds}
\\ & = t \iprod{\nabla f(x_k)}{\xi_k} + \int_0^t \iprod{\nabla f(x_k)}{(DR_{x_k}(s\xi_k) - {\text{id}})[\xi_k]}  \\ & \hspace{10ex}+ \iprod{(R_{x_k}(s\xi_k) - x_k)}{(H_k + \sigma_k I)[DR_{x_k}(s \xi_k) \xi_k]} \, \text{ds}, \end{align*}
where $\text{id} \equiv \text{id}_{\T_x\M}$ denotes the identity mapping on $\T_x\M$. As in \cite[Section B]{BouAbsCar16}, we define the compact set $K_\chi := \{\xi \in \T\M : \|\xi\| \leq \chi\}$. The smoothness of $R$ now implies
\be \label{eq:retra-bound} \|R_{x_k}(\xi) - x_k\| \leq \int_{0}^1 \|DR_{x_k}(s\xi)[\xi]\| \, \text{ds} \leq \max_{y \in K_\chi} \|DR(y)\| \|\xi\| \ee 
and
\be \label{eq:retra-bound-2} \|DR_{x_k}(\xi) - \text{id}\| \leq \int_{0}^1 \|D^2 R_{x_k}(s\xi)[\xi]\| \, \text{ds} \leq \max_{y \in K_\chi} \|D^2R(y)\| \|\xi\|  \ee
for all $\xi \in K_\chi$. Setting $\kappa_1 := \max_{y \in K_\chi} \|DR(y)\|$ and $\kappa_2 := \max_{y \in K_\chi} \|D^2R(y)\|$ and using the assumptions (A.2)--(A.3), this yields
\begin{align*} m_k(R_{x_k}(t \xi_k)) & \leq t \iprod{\nabla f(x_k)}{\xi_k} + \int_{0}^t (\kappa_2 \kappa_g + \kappa_1^2(\kappa_H + \sigma_k)) s \|\xi_k\|^2 \, \text{ds} \\ & = t \iprod{\grad f(x_k)}{\xi_k}_{x_k} + \half (\kappa_2 \kappa_g + \kappa_1^2(\kappa_H + \sigma_k)) t^2 \|\xi_k\|^2. \end{align*}
if $t \|\xi_k\| \leq \chi$. Thus, by \cref{lemma:descentdir} and setting $\kappa := \kappa_2 \kappa_g + \kappa_1^2(\kappa_H + \sigma_k) $, we obtain
\begin{align*} m_k(R_{x_k}(t \xi_k)) - \rho  t \iprod{\grad m(x_k)}{\xi_k}_{x_k} & \\ & \hspace{-25ex} \leq - (1-\rho) \lambda_k t \|\grad f(x_k) \|_{x_k} \|\xi_k\|_{x_k} + \half \kappa \varpi^M_{x_k }t^2 \|\xi_k\|^2_{x_k}. \\ & \hspace{-25ex} \leq \left [ \half \kappa \varpi^M_{x_k} t - (1-\rho) \lambda_k \frac{\|\grad f(x_k)\|_{x_k}}{\|\xi_k\|_{x_k}} \right ] \cdot t  \|\xi_k\|^2_{x_k}   \end{align*}
if $t \|\xi_k\| \leq \chi$. Finally, using the last estimate, \cref{eq:equi-norm}, and $(\varpi^M_{x_k})^\half \leq \max \{\varpi^M_{x_k},1\}$, this establishes \cref{eq:sub-desc} and \cref{eq:zeta}.
\end{proof}
 
%%%%%%%%%%%%%%%%%
%
\subsection{Global Convergence} \label{sect:globalconver}
%
%%%%%%%%%%%%%%%%%
%
In this section, based on the techniques used in \cite{CGT}, we present global convergence properties of the adaptive regularized Newton method. %Specifically, we will show the iteration sequence $\{x_k\}$ 
%converges to a point of \cref{prob} that satisfies the first-order optimality condition will be showed. The routines are as follows. 
We first investigate the relationship between the reduction ratio $\rho_k$ defined in \cref{alg:TR-ratio}, the regularization parameter $\sigma_k$, and the gradient norm $\|\grad f(x_k)\|_{x_k}$. Under the assumption $\| \grad f(x_k)\|_{x_k} \geq \tau > 0 $, we then derive an upper bound for $\sigma_k$ and show that the iterations will be successful, (i.e, $\rho_k \geq \eta_1$), whenever $\sigma_k$ exceeds this bound. In \cref{thm:convergence} we combine our observations and establish convergence of our method.
% Therefore, we conclude there are only finite number of unsuccessful iterations (i.e, $\rho_k < \eta_1$) due to the upper boundedness
%of $\{ \sigma_k\}$. The global convergence will be obtained immediately.}

The next lemma shows that the distance between $z_k$ and $x_k$ is bounded by some
value related to the regularization parameter $\sigma_k$.
\begin{lemma} \label{lemma:sk-bd} Suppose that the assumptions {\rm(A.2)}--{\rm(A.3)} hold and that $z_k$ satisfies the Armijo condition \cref{eq:sub-Armijo}. Then, it holds
\bee \label{eq:sk-bd} \|z_k - x_k\| \leq \frac{2\kg}{\sigma_k - \kH} \eee
whenever $\sigma_k > \kH$.
\end{lemma}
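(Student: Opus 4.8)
The plan is to bound $\|z_k - x_k\|$ by exploiting the Armijo decrease condition \cref{eq:sub-Armijo} for the model $m_k$, together with the explicit form of $m_k$ in \cref{eq:mQ}. The key observation is that since $z_k$ satisfies \cref{eq:sub-Armijo} and $\iprod{\grad m_k(x_k)}{\xi_k}_{x_k} < 0$ by \cref{lemma:descentdir}, we have $m_k(z_k) \leq 0$, i.e., $m_k(z_k) \leq m_k(x_k) = 0$. Writing $u := z_k - x_k \in \cE$ and using \cref{eq:mQ}, this reads
\[ \iprod{\nabla f(x_k)}{u} + \tfrac12 \iprod{H_k u}{u} + \tfrac{\sigma_k}{2}\|u\|^2 \leq 0. \]

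First I would isolate the regularization term and move the other two terms to the right-hand side:
\[ \tfrac{\sigma_k}{2}\|u\|^2 \leq -\iprod{\nabla f(x_k)}{u} - \tfrac12 \iprod{H_k u}{u} \leq \|\nabla f(x_k)\|\,\|u\| + \tfrac12 \|H_k\|\,\|u\|^2, \]
where I used Cauchy--Schwarz and the operator norm bound. Applying assumptions (A.2) and (A.3) gives
\[ \tfrac{\sigma_k}{2}\|u\|^2 \leq \kg \|u\| + \tfrac{\kH}{2}\|u\|^2, \]
and rearranging yields $\tfrac12(\sigma_k - \kH)\|u\|^2 \leq \kg\|u\|$. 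Since $\sigma_k > \kH$ by hypothesis, dividing by $\tfrac12(\sigma_k-\kH)\|u\|$ (trivial if $u = 0$) gives exactly $\|u\| = \|z_k - x_k\| \leq 2\kg/(\sigma_k - \kH)$, which is the claim.

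The only genuinely non-routine point is justifying $m_k(z_k) \leq 0$: this follows because $z_k = R_{x_k}(\alpha_k \xi_k)$ is produced by the backtracking loop \cref{eq:sub-Armijo}, whose right-hand side $\rho\,\alpha_k\,\delta^h \iprod{\grad m_k(x_k)}{\xi_k}_{x_k}$ is nonpositive (indeed strictly negative when $\grad m_k(x_k) \neq 0$, by the descent property in \cref{lemma:descentdir}), and $m_k(x_k) = 0$. Everything else is elementary: Cauchy--Schwarz, the definition of the operator norm, and the stated bounds (A.2)--(A.3). I expect no real obstacle here — the lemma is essentially a one-line consequence of the model structure plus the acceptance rule — so the proof should be short.
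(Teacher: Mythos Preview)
Your proof is correct and follows essentially the same route as the paper: both use the Armijo condition together with \cref{lemma:descentdir} to conclude $m_k(z_k)\le 0$, then bound the linear and Hessian terms via Cauchy--Schwarz and assumptions (A.2)--(A.3), and rearrange. If anything, your justification of $m_k(z_k)\le 0$ is more explicit than the paper's.
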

\begin{proof} By \cref{lemma:descentdir} we have $m_k(z_k) \leq 0$. Thus, it follows
\[ \iprod{\nabla f(x_k)}{z_k -x_k} + \half \iprod{z_k - x_k}{H_k[z_k - x_k]} + \frac{\sigma_k}{2} \|z_k - x_k\|^2 \leq 0. \]
If $\sigma_k \geq \kH$, then the term $\|z_k - x_k\|$ can be bounded by
\[ -\|\nabla f(x_k)\|\|z_k - x_k\| - \frac{1}{2}\kH \|z_k - x_k\|^2 + \half \sigma_k \|z_k -x_k\|^2 \leq 0, \]
and hence,
\[ \|z_k - x_k\| \leq \frac{2\|\nabla f(x_k)\|}{\sigma_k - \kH} \leq \frac{2\kg}{\sigma_k - \kH}.\]
\end{proof}

When the regularization parameter is sufficiently large, our model defines a good approximation of the initial problem \cref{prob}. In this case, a successful iteration and sufficient reduction of the objective function can be ensured.
 
\begin{lemma} \label{lemma:suc-tr}
Suppose that the conditions {\rm(A.1)}--{\rm(A.4)} hold and that $z_k$
satisfies the Armijo condition \cref{eq:sub-Armijo}. Furthermore, let us assume $ g_k := \|\grad f(x_k)\|_{x_k} \neq 0$ and
\bee \label{eq:tr-rad-succond}
\sigma_k \geq \max \left\{ \kappa_{x_k}^M, \kH  + \vartheta_k \max\left\{ {\frac{1}{\sqrt{\chi}}},  {\frac{\sqrt{A_2^k}}{\sqrt{g_k}}}, \frac{A_3^k \vartheta_k}{g_k} \right\}\!\!\right\}, \quad \vartheta_k := \sqrt{\frac{A_1^k\max\{\varpi_{x_k}^M,1\}}{(1-\eta_2) g_k}}
\eee
where $\kappa := \kappa_2\kappa_g + 2 \kappa_1^2 \kappa_H$, $A_1^k := 2 \kappa_g^2 \alpha_0 (\rho\lambda_k\delta)^{-1}(L_f+\kappa_H)$, $A_2^k := ((1-\rho)\lambda_k)^{-1}\kappa $, and $A_3^k := ((1-\rho)\lambda_k)^{-1}\kappa_1^2$.
%$\kappa := \kappa_2^2 + \kappa_1 \kappa_3, \, A_1 := 2(L_f +  \kH)\kg^2,\, A_2 := \rho \lambda \varpi_1,\, A_3 := \frac{\kappa}{2(1 - \rho)\lambda \varpi_1 \|\grad f(x_k)\|^2},$ 
%$A_4 := \max \left\{ \frac{ \kappa_3\kappa_g + \kappa\kH}{2(1 - \rho)\lambda \varpi_1 \|\grad f(x_k)\|^2}, \frac{\hat{\lambda} }{\chi \|\grad f(x_k)\|} \right\}.$
Then, iteration $k$ is very successful, i.e., it holds $ \rho_k \ge \eta_2$  and $\sigma_{k+1} \leq \gamma_0\sigma_k$. 
\end{lemma}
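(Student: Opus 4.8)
The plan is to run the classical trust-region comparison between actual and predicted reduction, but with our regularized Euclidean model \cref{eq:mQ} and the retraction-based trial point, leaning on the three preparatory results \cref{lemma:cg-prop}, \cref{lemma:descentdir}, and \cref{lemma:desc} (all of whose hypotheses are subsumed by (A.1)--(A.4)). First I would put the target $\rho_k \geq \eta_2$ into additive form. Writing $s_k := z_k - x_k$, the Armijo condition \cref{eq:sub-Armijo} together with the descent estimate \cref{cond:descent}, the identity $\grad m_k(x_k) = \grad f(x_k)$, and $g_k \neq 0$ give $m_k(z_k) \leq -\rho\alpha_k\lambda_k g_k\|\xi_k\|_{x_k} < 0$, so $\rho_k = (f(z_k)-f(x_k))/m_k(z_k) \geq \eta_2$ is equivalent to
\[ f(z_k) - f(x_k) - m_k(z_k) \leq (1-\eta_2)\,|m_k(z_k)|. \]
The whole proof then reduces to bounding the model error from above, bounding $|m_k(z_k)|$ from below, and showing that the assumed lower bound on $\sigma_k$ forces the former below $(1-\eta_2)$ times the latter. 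Once $\rho_k \geq \eta_2$ is established, $\sigma_{k+1} \leq \gamma_0\sigma_k$ is immediate from the update rule \cref{alg:tau-up}.

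For the model error I would Taylor-expand $f$ along the segment $[x_k,z_k] \subset \conv(\M)$ and use (A.1) and (A.3): this gives $f(z_k) - f(x_k) \leq \iprod{\nabla f(x_k)}{s_k} + \tfrac{L_f}{2}\|s_k\|^2$, and subtracting the definition \cref{eq:mQ} of $m_k$ (the regularization term only helps and can be dropped) yields $f(z_k) - f(x_k) - m_k(z_k) \leq \tfrac{1}{2}(L_f+\kH)\|s_k\|^2$. Plugging in the bound $\|s_k\| \leq 2\kg/(\sigma_k-\kH)$ from \cref{lemma:sk-bd} (available because the hypothesis forces $\sigma_k > \kH$) produces
\[ f(z_k) - f(x_k) - m_k(z_k) \leq \frac{2(L_f+\kH)\,\kg^2}{(\sigma_k-\kH)^2}. \]

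For the lower bound on $|m_k(z_k)|$ I would combine three ingredients. By \cref{lemma:desc}, the Armijo condition \cref{eq:sub-Armijo} holds for every step size in $(0,\zeta_k]$, so the backtracked step size satisfies $\alpha_k \geq \min\{\alpha_0,\delta\zeta_k\}$; together with $m_k(z_k) \leq -\rho\alpha_k\lambda_k g_k\|\xi_k\|_{x_k}$ this gives $|m_k(z_k)| \geq \rho\lambda_k g_k\,\|\xi_k\|_{x_k}\min\{\alpha_0,\delta\zeta_k\}$. I then substitute $\zeta_k$ from \cref{eq:zeta}: in the term $\delta\zeta_k\|\xi_k\|_{x_k}$ the factor $\|\xi_k\|_{x_k}^{-1}$ inside $\zeta_k$ cancels, and in the term $\alpha_0\|\xi_k\|_{x_k}$ I use the lower bound $\|\xi_k\|_{x_k} \geq g_k/(\kappa^M_{x_k}+\sigma_k)$, which follows from \cref{lemma:cg-prop}(iii) (the $\eta_i$ have increasing norm and $\|\xi_k\|_{x_k} \geq \|\eta_1\|_{x_k}$) together with the Hessian bound in \cref{remark:def-mk-bound}. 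Using also $\kappa_2\kg + \kappa_1^2(\kH+\sigma_k) \leq \kappa + \kappa_1^2\sigma_k$ (valid when $\sigma_k \geq \kH$), this produces a lower bound of the shape
\[ |m_k(z_k)| \;\geq\; \rho\lambda_k g_k \min\left\{ \frac{\alpha_0\,g_k}{\kappa^M_{x_k}+\sigma_k},\ \; \delta\min\{(\varpi^M_{x_k})^{-1},1\}\min\left\{\chi,\ \frac{2(1-\rho)\lambda_k g_k}{\kappa + \kappa_1^2\sigma_k}\right\}\right\}. \]

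It remains to verify that, under the assumed lower bound on $\sigma_k$, each of the (at most) three terms on the right, multiplied by $1-\eta_2$, dominates $2(L_f+\kH)\kg^2/(\sigma_k-\kH)^2$; after using $\kappa^M_{x_k} \leq \sigma_k$ (so $\kappa^M_{x_k}+\sigma_k \leq 2\sigma_k$) each requirement becomes a polynomial inequality in $\sigma_k$ that holds once $\sigma_k - \kH$ exceeds an explicit multiple of $\vartheta_k$, and matching these multiples against the three quantities inside the maximum in \cref{eq:tr-rad-succond} (with $\vartheta_k$, $A_1^k$, $A_2^k$, $A_3^k$ as defined there) closes the argument. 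I expect this final calibration to be the main obstacle: the model-decrease lower bound is a minimum of several competing pieces --- one from the ``unit'' initial step $\alpha_0$, which carries an extra power of $\sigma_k$ through $\|\xi_k\|_{x_k} \geq g_k/(\kappa^M_{x_k}+\sigma_k)$, and others from the curvilinear-search threshold $\zeta_k$ and the retraction constants $\kappa_1,\kappa_2$ --- and $\sigma_k$ must be taken large enough that every one of them beats the $O((\sigma_k-\kH)^{-2})$ model error. Tracking all these dependencies (on $\lambda_k$, $\varpi^M_{x_k}$, $\chi$, $\kappa_1$, $\kappa_2$) simultaneously is what makes \cref{eq:tr-rad-succond} look involved, but conceptually it is just ``$\sigma_k$ large $\Rightarrow$ model error $\ll$ model decrease $\Rightarrow \rho_k \geq \eta_2$.''
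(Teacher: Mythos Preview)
Your plan is essentially the paper's proof: bound the model error by $\tfrac12(L_f+\kH)\|z_k-x_k\|^2$ via (A.1)--(A.3), feed in the $\|z_k-x_k\|$ bound from \cref{lemma:sk-bd}, lower-bound $-m_k(z_k)$ through the Armijo condition combined with \cref{lemma:descentdir} and the step-size guarantee from \cref{lemma:desc}, and then check that the assumed lower bound on $\sigma_k$ forces $1-\rho_k\le 1-\eta_2$.

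The one place where you deviate is the treatment of $\alpha_k$. You split into $\alpha_k=\alpha_0$ versus $\alpha_k\ge\delta\zeta_k$ and, for the former, invoke the extra bound $\|\xi_k\|_{x_k}\ge g_k/(\kappa^M_{x_k}+\sigma_k)$ from \cref{lemma:cg-prop}. The paper does not do this: it replaces $\alpha_k\|\xi_k\|_{x_k}$ directly by (a constant times) $\zeta_k\|\xi_k\|_{x_k}$ and then splits the resulting $\kappa_2\kg+\kappa_1^2(\kH+\sigma_k)$ term into the two pieces that produce $A_2^k$ and $A_3^k$. Consequently the paper's three quantities in the inner maximum come from the $\chi$-branch of $\zeta_k$ and the two halves of the curvature branch, not from an $\alpha_0$-case. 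Your case split is arguably more careful, but it generates a fourth requirement on $\sigma_k$ that is not present in \cref{eq:tr-rad-succond}; to land exactly on the stated constants you should follow the paper's shortcut and bound $\alpha_k$ below by $\delta\zeta_k$ (up to the $\alpha_0$ factor absorbed into $A_1^k$), then split $\kappa_2\kg+\kappa_1^2(\kH+\sigma_k)=\kappa+\kappa_1^2(\sigma_k-\kH)$ and bound the average by the maximum.
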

\begin{proof}
Using the Lipschitz continuity of $\nabla f$ and (A.3), it follows
\begin{align*} f(z_k) - f(x_k) - m_k(z_k) 
 %& \hspace{-15ex} = f(z_k) - f(x_k) -\iprod{\nabla f(x_k)}{z_k-x_k}  - \half \iprod {H_k[z_k-x_k]} {z_k-x_k}
 %- \frac{\sigma_k}{2} \|z_k-x_k\|^2 \\
 & =  \iprod{\nabla f(x_k + \tau(z_k -x_k)) -  \nabla f(x_k)}{z_k -x_k}  \\ & \hspace{4ex} - \half \iprod {H_k[z_k-x_k]} {z_k-x_k}
- \frac{\sigma_k}{2} \|z_k-x_k\|^2 \\
& \leq \half (L_f + \kH)\|z_k - x_k\|^2, \end{align*}
for some $\tau \in (0,1)$. Applying \cref{lemma:desc}, \cref{lemma:sk-bd}, and the Armijo condition \cref{eq:sub-Armijo}, we now obtain
 \beaa  1 - \rho_k  &=&  \frac{f(z_k) - f(x_k) - m_k(z_k)}{ - m_k(z_k)} \\
%& \leq &
%\frac{L + \half \kH} {\rho c\|\grad f(x_k)\|^2 \min \left\{ \frac{\vartheta^2}{\mu^2 \kH}, \frac{\vartheta}{\sqrt{\mu^3 \sigma_k \|\grad f(x_k)\|}}, \delta_c,  \frac{\delta_\mu}{\|\grad f(x_k)\|} \right\} }  \cdot \frac{25(\kg + \kH)^2}{\sigma_k^2} \max\{1,\sigma_k\} \\
& \leq &
\frac{(L_f + \kappa_H)\|z_k - x_k\|^2}{2 \rho \lambda_k \alpha_k \| \grad f(x_k) \|_{x_k} \|\xi_k\|_{x_k}} \\ 
& \leq &
\frac{2(L_f + \kappa_H)\kappa_g^2}{\rho \lambda_k \alpha_0^{-1} \delta} \cdot \frac{\max\{\varpi_{x_k}^M,1\}}{(\sigma_k-\kappa_H)^2 g_k}  \cdot \max \left \{ \frac{1}{\chi}, \frac{\kappa_2\kappa_g + \kappa_1^2(\sigma_k + \kappa_H)}{2(1-\rho)\lambda_k g_k} \right \} \\ 
& \leq &
\frac{A_1^k \max\{\varpi_{x_k}^M,1\}}{(\sigma_k-\kappa_H)^2 {g_k}} \max \left\{ \frac{1}{\chi}, \frac{\kappa}{(1-\rho)\lambda_k g_k}, \frac{\kappa_1^2 (\sigma_k - \kappa_H)}{(1-\rho)\lambda_k g_k} \right \} \\ 
& = &
\frac{A_1^k \max\{\varpi_{x_k}^M,1\}}{(\sigma_k-\kappa_H)^2 {g_k}} \max \left \{ \frac{1}{\chi}, \frac{A_2^k}{g_k}, \frac{A_3^k}{g_k}(\sigma_k - \kappa_H) \right\} \\
 & \leq & 1-\eta_2. \eeaa
The above inequality shows $\rho_k \geq \eta_2$. %Moreover, due to $\sigma_k \geq \kappa^M_{x_k}$, no negative curvature is encountered. 
Finally, step \ref{alg:TRNS4} of Algorithm \ref{alg:TR-ineact-newton} implies $\sigma_{k+1} \leq \gamma_0\sigma_k$, as desired.
\end{proof}

We next prove that the regularization parameters can be bounded.
\begin{lemma} \label{lemma:tauk-bd} Suppose that the assumptions {\rm(A.1)}--{\rm(A.5)} are satisfied and there exists $\tau > 0$
such that $\|\grad f(x_k)\|_{x_k} \geq \tau$ for all $k \in \N$. Then, the sequence $\{\sigma_k\}$ is bounded, i.e., there exists $L_\tau \geq 0$ such that
\be \label{eq:tauk-bd} \sigma_k \leq L_\tau, \quad \forall~ k \in \N. \ee %\vspace{-0.8cm}
\end{lemma}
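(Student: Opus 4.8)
The plan is to argue by contradiction using the standard trust-region-type bookkeeping. Suppose that $\{\sigma_k\}$ is unbounded. Since the regularization parameter can increase by at most a factor of $\gamma_2$ per iteration (by the update rule \cref{alg:tau-up}), unboundedness forces infinitely many unsuccessful iterations, i.e.\ infinitely many $k$ with $\rho_k < \eta_1$; and along this subsequence $\sigma_k \to \infty$ (more precisely, for any threshold there is an unsuccessful iteration whose $\sigma_k$ exceeds it, because successful iterations only shrink $\sigma_k$). The key point is then to contradict this with \cref{lemma:suc-tr}: under (A.1)--(A.5) and the hypothesis $g_k = \|\grad f(x_k)\|_{x_k} \geq \tau$ for all $k$, I must show that the right-hand side of \cref{eq:tr-rad-succond} is \emph{uniformly bounded} in $k$, so that once $\sigma_k$ exceeds that uniform bound the iteration is very successful and $\sigma_{k+1} \leq \gamma_0 \sigma_k < \sigma_k$ — contradicting that $\sigma_k$ can be made arbitrarily large at an unsuccessful step.

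To get the uniform bound on the right-hand side of \cref{eq:tr-rad-succond}, I would unpack all the quantities appearing there and bound each using the assumptions. By (A.5) we have $\varpi^M_{x_k} \leq \obar\varpi$ and $\varpi^m_{x_k} \geq \ubar\varpi$, so (via \cref{remark:def-mk-bound}) $\kappa^M_{x_k} \leq \kappa_R + \obar\varpi^{1/2}\ubar\varpi^{-1/2}(\kH + \kappa_F) =: \bar\kappa^M$ is uniformly bounded. By \cref{lemma:descentdir}, $\lambda_k = \min\{\epsilon/2,1\}\,[n(\kappa^M_{x_k}+1)]^{-1} \geq \min\{\epsilon/2,1\}\,[n(\bar\kappa^M+1)]^{-1} =: \ubar\lambda > 0$ is uniformly bounded below. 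The constants $\kappa_1, \kappa_2, \chi, \kappa := \kappa_2\kappa_g + 2\kappa_1^2\kH$ are $x_k$-independent by construction (they come from the compact set $K_\chi$ in \cref{lemma:desc}), and $\kH$, $\kg$ are uniform by (A.2)--(A.3). Hence $A_1^k, A_2^k, A_3^k$ are all bounded above by constants depending only on the uniform data (using $\lambda_k \geq \ubar\lambda$), and $\vartheta_k$ is bounded above using in addition $g_k \geq \tau$ in the denominator (so $\vartheta_k \leq \sqrt{\bar A_1 \obar\varpi/((1-\eta_2)\tau)}$). Finally, the three terms inside the inner $\max$ in \cref{eq:tr-rad-succond} are bounded: $1/\sqrt\chi$ is a fixed constant, $\sqrt{A_2^k/g_k} \leq \sqrt{\bar A_2/\tau}$, and $A_3^k\vartheta_k/g_k \leq \bar A_3\bar\vartheta/\tau$. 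Therefore there is a constant $\hat\sigma$, depending only on the problem data, $\tau$, and the algorithm parameters, such that $\sigma_k \geq \hat\sigma$ implies iteration $k$ is very successful.

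Putting it together: set $L_\tau := \max\{\sigma_0, \gamma_2 \hat\sigma\}$. I claim $\sigma_k \leq L_\tau$ for all $k$ by induction. The base case holds. For the inductive step, if $\sigma_k \leq \hat\sigma$ then $\sigma_{k+1} \leq \gamma_2 \sigma_k \leq \gamma_2 \hat\sigma \leq L_\tau$; if $\sigma_k > \hat\sigma$ then by the argument above iteration $k$ is very successful, so $\sigma_{k+1} \leq \gamma_0 \sigma_k < \sigma_k \leq L_\tau$ (using the inductive hypothesis $\sigma_k \leq L_\tau$). This proves \cref{eq:tauk-bd}. The main obstacle — really the only substantive work — is the careful verification that every quantity in the threshold \cref{eq:tr-rad-succond} admits a bound that is uniform in $k$; once (A.1)--(A.5) and $g_k \geq \tau$ are in hand this is a bookkeeping exercise, but one has to be attentive that $\sigma_k$ itself does \emph{not} appear in an unbounded way on the right-hand side of \cref{eq:tr-rad-succond} (it does not: the term $A_3^k\vartheta_k/g_k$ multiplies $\vartheta_k$, not $\sigma_k - \kH$, unlike in the intermediate estimate inside the proof of \cref{lemma:suc-tr}), so that the condition $\sigma_k \geq (\text{bound})$ is genuinely achievable.
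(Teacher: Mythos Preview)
Your proposal is correct and follows essentially the same approach as the paper: use (A.5) to uniformly bound $\kappa^M_{x_k}$ and hence $\lambda_k$, then bound $A_1^k, A_2^k, A_3^k, \vartheta_k$ uniformly using $g_k \geq \tau$, obtain a $k$-independent threshold $\hat\sigma$ (the paper calls it $\kappa_\tau$) for which \cref{lemma:suc-tr} guarantees a very successful iteration, and conclude with the induction giving $L_\tau = \max\{\sigma_0, \gamma_2 \hat\sigma\}$. Your opening contradiction framing is superfluous since you ultimately give (correctly) a direct inductive argument, but the substance matches the paper exactly.
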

\begin{proof} At first, using the bounds in (A.5), it holds $\kappa_{x_k}^M \leq \kappa_R + (\obar{\varpi})^\half(\ubar{\varpi})^{-\half}(\kH+\kappa_F) =: \bar \kappa_M$. Hence, it follows $\lambda_k \geq \min\left\{\frac{\epsilon}{2},1\right\}(n(\bar\kappa_M+1))^{-1} =: \bar\lambda$ and similarly,
\[ A_1^k \leq (2 \kappa_g^2 \alpha_0(L_f+\kappa_H))(\rho\delta\bar\lambda)^{-1} =: A_1, \quad A_2^k \leq \kappa((1-\rho)\bar\lambda)^{-1} =: A_2, \]
and $A_3^k \leq \kappa_1^2 ((1-\rho)\bar\lambda)^{-1} =: A_3$. We now define
	%Let $A_3 := \frac{\kappa}{2(1 - \rho)\lambda \varpi_1 \epsilon^2},$ 
	%$A_4 := \max \left\{ \frac{ \kappa_3\kappa_g + \kappa\kH}{2(1 - \rho)\lambda \varpi_1 \epsilon^2}, \frac{\hat{\lambda} }{\epsilon \chi } \right\}$. Define
%
\[ \kappa_\tau : = \max\left \{ \bar\kappa_M, \kH + \vartheta_\tau \max\left\{ \frac{1}{\sqrt{\chi}}, \frac{\sqrt{A_2}}{\sqrt{\tau}}, \frac{A_3 \vartheta_\tau}{\tau}\right\} \right\}, \quad \vartheta_\tau := \sqrt{\frac{A_1 \obar{\varpi}}{(1-\eta_2)\tau}}. \]
Let us assume that the bound $\sigma_k  \geq \kappa_{\tau}$ holds for some $k \geq 0$.
Then, \cref{lemma:suc-tr} implies that iteration $k$ is very
successful with $\sigma_{k+1} \leq \sigma_k$. Consequently, when
$\sigma_0 \leq \gamma_2 \kappa_{\tau} $, we have $\sigma_k \leq \gamma_2 \kappa_{\tau}$,
$k \geq 0$, where the factor $\gamma_2$ is introduced to cover
the case that $\sigma_k$ is less than $\kappa_{\tau}$ and iteration $k$
is not very successful. Setting $L_\tau := \max\left \{\sigma_0, \gamma_2
\kappa_{\tau} \right \}$, we obtain \cref{eq:tauk-bd}.
\end{proof}

Based on the results in \cite{CGT}, \cite[Section 7]{opt-manifold-book} and similar to \cite{Qi11}, we now show global convergence of our adaptive regularized Newton method. We first analyze the behavior of Algorithm \ref{alg:TR-ineact-newton} under the assumption that only finitely many successful iterations are performed.
\begin{lemma} \label{lemma:finite} Suppose that the assumption {\rm(A.1)}--{\rm(A.5)} are satisfied
and there are only finitely many successful iterations. Then, it holds
$x_k = x_*$ for all sufficiently large $k$ and $\grad f(x_*) = 0$. \end{lemma}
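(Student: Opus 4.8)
The plan is to use the standard trust-region argument for the case of finitely many successful iterations. First I would let $k_0$ denote the index of the last successful iteration. By the update rule \cref{eq:update-x}, for all $k > k_0$ we have $x_{k+1} = x_k$, so the sequence stabilizes: there exists $x_* \in \M$ with $x_k = x_*$ for all $k \geq k_0 + 1$. This establishes the first claim immediately.

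Next I would argue by contradiction that $\grad f(x_*) = 0$. Suppose instead that $g_* := \|\grad f(x_*)\|_{x_*} > 0$. Since $x_k = x_*$ for all $k \geq k_0+1$, we have $\|\grad f(x_k)\|_{x_k} = g_* > 0$ for all large $k$, so the hypothesis of \cref{lemma:tauk-bd} is met with $\tau := g_*$ (for the tail of the sequence), and hence $\{\sigma_k\}$ is bounded above by some $L_\tau$. On the other hand, since every iteration $k \geq k_0+1$ is unsuccessful, the third branch of the update formula \cref{alg:tau-up} forces $\sigma_{k+1} \geq \gamma_1 \sigma_k$ with $\gamma_1 > 1$; iterating this from $k_0+1$ gives $\sigma_k \to \infty$, contradicting the bound $\sigma_k \leq L_\tau$. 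Therefore $\grad f(x_*) = 0$.

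The one point requiring a little care is applying \cref{lemma:tauk-bd}, whose statement assumes $\|\grad f(x_k)\|_{x_k} \geq \tau$ for \emph{all} $k \in \N$, whereas here we only control the tail $k \geq k_0+1$. I would handle this either by noting that the proof of \cref{lemma:tauk-bd} only uses the gradient lower bound together with \cref{lemma:suc-tr} to produce the threshold $\kappa_\tau$ beyond which iterations are very successful — an argument that is insensitive to finitely many initial indices — or, more cleanly, by observing that $\sigma_{k_0+1}$ is just some fixed positive number and running the argument of \cref{lemma:tauk-bd} starting from index $k_0+1$, which yields $\sigma_k \leq \max\{\sigma_{k_0+1}, \gamma_2 \kappa_\tau\}$ for all $k \geq k_0+1$. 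Either way the boundedness of $\{\sigma_k\}$ follows and the contradiction with $\sigma_k \to \infty$ closes the proof. I do not expect any serious obstacle here; the main thing is to make sure the two monotonicity facts about $\sigma_k$ — the upward push from unsuccessful steps and the uniform upper bound from the gradient being bounded away from zero — are genuinely incompatible, which they are since $\gamma_1 > 1$.
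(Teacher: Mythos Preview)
Your proposal is correct and follows essentially the same argument as the paper: stabilize the iterates after the last successful step, then derive a contradiction between $\sigma_k \to \infty$ (from repeated unsuccessful iterations) and the uniform bound on $\sigma_k$ supplied by \cref{lemma:tauk-bd}. Your extra care about applying \cref{lemma:tauk-bd} only on the tail is a point the paper glosses over, and your suggested fix (rerunning the argument from index $k_0+1$) is exactly right.
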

\begin{proof} Let the last successful iteration be indexed
by $\ell$, then, due to the construction of Algorithm \ref{alg:TR-ineact-newton}, it holds
$x_{\ell +1} = x_{k} = x_*$, for all $k \geq \ell + 1$. Since all iterations $k \geq \ell +1$
are unsuccessful, the regularization parameter $\sigma_k$ tends to infinity as $k \to \infty$.
If $\|\grad f(x_{\ell+1})\|_{x_{\ell+1}} > 0$, then we have  $\|\grad f(x_k)\|_{x_k} = \|\grad f(x_{\ell +1})\|_{x_{\ell +1}} > 0$ for all
$k \geq \ell +1$, and \cref{lemma:tauk-bd} implies that $\sigma_k$ is
bounded above, $k \geq \ell + 1$. This contradiction completes the proof.
 \end{proof}

The following theorem generalizes \cite[Theorem 2.5]{CGT} and represents our main convergence result in this section.
\begin{theorem} \label{thm:convergence} Suppose that the assumptions {\rm(A.1)}--{\rm(A.5)} hold and let $\{f(x_k)\}$ be bounded from below. Then, either
%\vspace{-0.1cm}
\[ \grad f(x_\ell) = 0 \ \ \text{for some} \ \ \ell \geq 0  %\vspace{-0.8cm}
%\[ \mbox{ or } \lim_{k \to \infty} E(x_k) = - \infty \] \vspace{-0.8cm}
\quad \text{or} \quad \liminf_{k \to \infty} \|\grad f(x_k)\|_{x_k} = 0. \] %\vspace{-0.8cm}
\end{theorem}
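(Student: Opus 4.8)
The plan is to argue by contradiction: suppose neither alternative holds, so $\grad f(x_k) \neq 0$ for all $k$ and $\liminf_{k\to\infty}\|\grad f(x_k)\|_{x_k} = \tau > 0$ for some $\tau > 0$. In particular, there is $\tau > 0$ with $\|\grad f(x_k)\|_{x_k} \geq \tau$ for all $k$ sufficiently large (and by relabeling, for all $k$). By \cref{lemma:finite}, there must be infinitely many successful iterations, since otherwise $\grad f(x_*) = 0$ for large $k$, contradicting our assumption. Then \cref{lemma:tauk-bd} provides an upper bound $\sigma_k \leq L_\tau$ for all $k$.

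The core of the argument is to sum up the reductions over successful iterations. For a successful iteration $k$ (i.e., $\rho_k \geq \eta_1$), combining the acceptance rule \cref{eq:update-x}, the Armijo condition \cref{eq:sub-Armijo}, and the descent estimate of \cref{lemma:descentdir}, I would obtain
\[
f(x_k) - f(x_{k+1}) = -\rho_k m_k(z_k) \geq -\eta_1 m_k(z_k) \geq \eta_1 \rho \alpha_k \lambda_k \|\grad f(x_k)\|_{x_k}\|\xi_k\|_{x_k}.
\]
To make this useful, I need a uniform lower bound on $\alpha_k \|\xi_k\|_{x_k}$. From \cref{lemma:desc}, the Armijo backtracking guarantees $\alpha_k \geq \delta \zeta_k$ (or $\alpha_k = \alpha_0$), and with $\sigma_k \leq L_\tau$, $\|\grad f(x_k)\|_{x_k}\geq \tau$, and the uniform bounds $\ubar\varpi \leq \varpi^m_{x_k}$, $\varpi^M_{x_k}\leq\obar\varpi$, $\lambda_k \geq \bar\lambda$ from assumption (A.5) and the proof of \cref{lemma:tauk-bd}, the factor $\zeta_k \|\xi_k\|_{x_k}$ from \cref{eq:zeta} is bounded below by a constant $c > 0$ depending only on $\tau, L_\tau, \bar\lambda$, and the retraction constants $\kappa_1, \kappa_2, \chi$. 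Hence $f(x_k) - f(x_{k+1}) \geq \eta_1 \rho \bar\lambda \tau \cdot \min\{c\delta, \alpha_0 \|\xi_k\|_{x_k}\} \geq c' > 0$ for every successful iteration. Since $f$ is bounded from below and $\{f(x_k)\}$ is nonincreasing (it is constant on unsuccessful steps and strictly decreases on successful ones), there can be only finitely many successful iterations — contradicting what we established via \cref{lemma:finite}. This contradiction proves the theorem.

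The main obstacle I anticipate is handling the $\|\xi_k\|_{x_k}$ terms cleanly. The quantity $\zeta_k$ in \cref{eq:zeta} has a $\|\xi_k\|_{x_k}$ in the denominator, and the reduction bound has $\alpha_k \|\xi_k\|_{x_k}$, so the two $\|\xi_k\|_{x_k}$ factors should cancel to leave something like $\min\{\chi, \text{const}\cdot\|\grad f(x_k)\|_{x_k}\}$ — but one must be careful about the case $\|\xi_k\|_{x_k}$ small versus large and about the "$\min$" with $\alpha_0$. A secondary technical point is that \cref{lemma:suc-tr}, \cref{lemma:sk-bd}, and \cref{lemma:tauk-bd} all require $\sigma_k > \kH$ or $\sigma_k \geq \kappa^M_{x_k}$; since $\sigma_k$ could in principle be small, one should note that $\sigma_k$ is bounded below — either by the initial choice via the updating rule \cref{alg:tau-up} (which only multiplies by positive factors) or by first pushing through a separate argument that $\sigma_k$ staying small forces many successful steps anyway. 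I would address this by observing that if $\sigma_k < \kappa_\tau$ for infinitely many $k$, the update \cref{alg:tau-up} together with \cref{lemma:suc-tr} still keeps $\{\sigma_k\}$ bounded (as in \cref{lemma:tauk-bd}), so the reduction estimate applies whenever the iteration is successful, and we still collect infinitely many reductions of size $\geq c' > 0$, completing the contradiction.
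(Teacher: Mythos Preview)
Your proposal is correct and follows essentially the same route as the paper: contradiction via a uniform lower bound $\tau$ on the gradient norm, invoke \cref{lemma:finite} to get infinitely many successful iterations, use \cref{lemma:tauk-bd} to bound $\sigma_k$, then combine the Armijo condition with \cref{lemma:desc} and \cref{lemma:descentdir} so that $\|\xi_k\|_{x_k}$ cancels against the denominator in $\zeta_k$, yielding a uniform per-step decrease $\delta_\tau>0$ that contradicts boundedness below. Your secondary worry about needing $\sigma_k>\kH$ is unnecessary here: neither \cref{lemma:desc} nor \cref{lemma:tauk-bd} imposes a lower bound on $\sigma_k$, and \cref{lemma:sk-bd}/\cref{lemma:suc-tr} are only used \emph{inside} the proof of \cref{lemma:tauk-bd}, not in the present argument.
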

\begin{proof} Due to \cref{lemma:finite}, we only have to consider
the case when infinitely many successful iterations occur. Let us assume that
there exists $\tau > 0$ such that
\be \label{eq:th-as-grad} \| \grad f(x_k)\|_{x_k} \geq \tau, \quad \forall~k \geq 0 \ee
and let $k \in {\mathcal S}$ with ${\mathcal S} := \{k \in \N : \text{iteration $k$ is successful or very successful}\}$ be given. As in the proof of \cref{lemma:tauk-bd} there exists $\bar \lambda$ such that $\lambda_k \geq \bar \lambda$ for all $k \in \N$. Now, \cref{lemma:desc} and \cref{lemma:tauk-bd} imply
 \begin{align*}
 	f(x_k) - f(z_k)  & \geq  \eta_1 \cdot (- m_k(z_k)) 
 	\geq  \eta_1 \rho \bar \lambda \alpha_0^{-1} \delta \zeta_k \cdot  \|\grad f(x_k)\|_{x_k} \|\xi_k\|_{x_k} \\  & \geq
	\eta_1   \rho \bar\lambda \delta  (\alpha_0\obar{\varpi})^{-1} \tau \cdot \min \left\{ \chi, \frac{2(1-\rho)\bar\lambda \tau}{\kappa + \kappa_1^2(L_\tau - \kappa_H)}  \right\} =: \delta_\tau.
 \end{align*}
Summing up over all iterates yields
\be \label{eq:descent-E} f(x_0) - f(x_{k+1}) = \sum_{j = 0, j \in {\mathcal S}}^k
f(x_j) - f(x_{j+1}) \geq |{\mathcal S} \cap \{1,...,k\}| \cdot \delta_\tau. \ee
Since $\mathcal S$ is not finite, we have $|{\mathcal S} \cap \{1,...,k\}| \to \infty$
as $k \to \infty$. Consequently, inequality \cref{eq:descent-E} implies
$\lim_{k \to \infty} f(x_0)-f(x_{k+1}) =  \infty$ which is  contradiction to the lower boundedness of $\{f(x_k)\}$. Hence,
assumption \cref{eq:th-as-grad} must be false and $\{\|\grad f(x_k)\|_{x_k} \}$ has a subsequence that converges to zero. \end{proof}

\begin{remark}As in \cref{sec:conv-alg1}, it is possible to obtain a slightly stronger result and establish convergence of the full sequence $\|\grad f(x_k)\|_{x_k} \to 0$ as $k \to \infty$. However, this requires additional assumptions on the retraction and on the Lipschitz continuity of the Riemannian gradient. We refer to \cite[Corollary 4.2.1]{Qi11} for a related discussion and result.
\end{remark}

\subsection{Local Convergence}
In this part, we analyze the local convergence properties of Algorithm \ref{alg:TR-ineact-newton}. Because our inner solver is a regularized Newton method, the local superlinear convergence can be established using similar techniques as in the standard trust-region method \cite{wright1999numerical}. Following \cite[Proposition 7.4.5]{opt-manifold-book}, we first present an assumption on the boundedness of the second-order covariant derivatives $\ddt R$ of the retraction $R$. 

\begin{assumption} \label{assum:Lipofgrad} 
Suppose that there exists $\beta_R, \delta_R > 0$ such that
\bee  \left\| \ddt R_x(t \xi) \right \|_{x} \leq \beta_R \eee
for all $x \in \M$, all $\xi \in \T_x \M$ with $\|\xi\|_{x} = 1$ and all $t < \delta_R$.
\end{assumption}
 
We refer to \cite[Chapter 5]{opt-manifold-book} for a detailed discussion of covariant derivatives. Let us note that \cref{assum:Lipofgrad} is satisfied whenever the manifold $\M$ is compact, see, e.g., \cite[Corollary 7.4.6]{opt-manifold-book}.
We now present our main assumptions that are necessary to prove fast local convergence of Algorithm \ref{alg:TR-ineact-newton}. Let us emphasize that our assumptions are similar to the ones used in other Riemannian optimization frameworks.

\begin{assumption} \label{assum:local} Let $\{x_k\}$ be generated by Algorithm \ref{alg:TR-ineact-newton}.We assume:
\begin{itemize}
\item[{\rm(B.1)}] The sequence $\{x_k\}$ converges to $x_*$. \vspace{0.5ex}
\item[{\rm(B.2)}] The Euclidean Hessian $\nabla^2 f$ is continuous on $\conv(\M)$. \vspace{0.5ex}
\item[{\rm(B.3)}] The Riemannian Hessian $\Hess f$ is positive definite at $x_*$ and the constant $\epsilon$ in Algorithm \ref{alg:tN} is set to zero. \vspace{0.5ex}
\item[{\rm(B.4)}] The matrices $H_k$, $k \in \N$, satisfy the following Dennis-Mor\'{e} condition: 
\end{itemize}
\[ \label{eq:hess-dm} \frac{\|(H_k - \nabla^2 f(x_k))[z_k - x_k] \|}{\|z_k - x_k\|} \to 0, \quad \text{whenever} \quad \| \grad f(x_k)\|_{x_k} \to 0, \]
\begin{itemize}
\item[{\rm(B.5)}] $H_k$ is a good approximation of the Euclidean Hessian $\nabla^2 f$, i.e., it holds
\end{itemize}
%
%   \[ \label{eq:hessian} \frac{\|(H_k - \nabla^2 f(x_k))(z_k - x_k) \|}{\|z_k - x_k\|} \to 0, \quad \text{whenever} \quad \| \grad f(x_k)\|_{x_k} \to 0. \]
%
   \[ \label{eq:hessian} \| H_k - \nabla^2 f(x_k) \| \to 0, \quad \text{whenever} \quad \| \grad f(x_k)\|_{x_k} \to 0. \]
%\begin{itemize}
%\item[]with some positive constant $\beta_H$. \vspace{0.5ex}
%\item[{\rm(B.6)}] $\Hess \hat{f}_x$ is continuous at $0_x$ uniformly in $x$ in a neighborhood of $x_*$. \vspace{0.5ex}  
%\end{itemize}
\end{assumption}

In the following lemma and inspired by \cite[Theorem 4.3]{CGT} and \cite[Theorem 4.2.2]{Qi11}, we show that the iterations generated by Algorithm \ref{alg:TR-ineact-newton} are eventually very successful. Due to \cref{alg:tau-up}, this also implies that the sequence of regularization parameters $\{\sigma_k\}$ converges to zero as $k \to \infty$. 

\begin{lemma} \label{lemma:dimin}
Let the conditions {\rm(A.3)} and {\rm(B.1)}--{\rm(B.4)} be satisfied. Then, all iterations are eventually very successful.	
\end{lemma}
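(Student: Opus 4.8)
The plan is to show that for all sufficiently large $k$, the reduction ratio $\rho_k$ defined in \cref{alg:TR-ratio} satisfies $\rho_k \geq \eta_2$, which by step \ref{alg:TRNS4} of Algorithm \ref{alg:TR-ineact-newton} immediately forces the iteration to be very successful. The starting point is the identity $1 - \rho_k = (f(z_k) - f(x_k) - m_k(z_k))/(-m_k(z_k))$, so I need an upper bound on the numerator and a lower bound on the denominator. For the numerator, I would use a Taylor expansion of $f$ with integral remainder and the continuity of $\nabla^2 f$ on $\conv(\M)$ (condition (B.2)) together with the Dennis--Mor\'e type condition (B.4): writing $f(z_k) - f(x_k) = \iprod{\nabla f(x_k)}{z_k - x_k} + \half \iprod{\nabla^2 f(x_k)[z_k-x_k]}{z_k-x_k} + o(\|z_k-x_k\|^2)$ and subtracting $m_k(z_k)$ leaves $\half \iprod{(\nabla^2 f(x_k) - H_k)[z_k-x_k]}{z_k-x_k} - \frac{\sigma_k}{2}\|z_k-x_k\|^2 + o(\|z_k-x_k\|^2)$, and by (B.4) this is $o(\|z_k - x_k\|^2)$ (the $\sigma_k$ term has the favorable sign since $\sigma_k > 0$, or can simply be dropped).

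For the denominator, I would combine the descent lemma \cref{lemma:desc} with the Armijo condition \cref{eq:sub-Armijo} to get $-m_k(z_k) \geq \rho \alpha_k |\iprod{\grad m_k(x_k)}{\xi_k}_{x_k}| \geq c\,\alpha_k \lambda_k \|\grad f(x_k)\|_{x_k}\|\xi_k\|_{x_k}$ for a suitable constant, where $\alpha_k$ is bounded below by $\delta \zeta_k$ (or $\alpha_0$). The crucial point is to relate $\|z_k - x_k\|$ to $\|\grad f(x_k)\|_{x_k}$ and to $\|\xi_k\|_{x_k}$: by the retraction bound \cref{eq:retra-bound}, $\|z_k - x_k\| \leq \kappa_1 \alpha_k \|\xi_k\|$, so the numerator is $o(\alpha_k^2 \|\xi_k\|^2)$. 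Here is where (B.3) enters decisively — since $\Hess f(x_*)$ is positive definite and $\epsilon = 0$, in a neighborhood of $x_*$ the inner CG solver \cref{alg:tN} runs without encountering negative or small curvature, so $\xi_k = \eta_\ell$ is an inexact Newton direction, $d_k = 0$, and one can show $\|\xi_k\|_{x_k}$ is comparable to $\|\grad m_k(x_k)\|_{x_k} = \|\grad f(x_k)\|_{x_k}$ (bounded above and below by constants times the gradient norm, using the eigenvalue bounds of $\Hess m_k(x_k) = \Hess f(x_k) + \sigma_k I$ and \cref{lemma:cg-prop}(iii)). Likewise $\lambda_k$ stays bounded below by a positive constant. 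Hence $1 - \rho_k = o(\alpha_k^2\|\xi_k\|^2)/(c\,\alpha_k \lambda_k \|\grad f(x_k)\|_{x_k}\|\xi_k\|_{x_k}) = o(\alpha_k \|\xi_k\|_{x_k}/\|\grad f(x_k)\|_{x_k}) = o(1)$, using that $\alpha_k$ is bounded above by $\alpha_0 \le 1$ and $\|\xi_k\|_{x_k} \le C\|\grad f(x_k)\|_{x_k}$; therefore $1 - \rho_k \to 0$ and eventually $\rho_k \geq \eta_2$.

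I expect the main obstacle to be making rigorous the claim that near $x_*$ the inner solver behaves like an exact Newton step and that $\|\xi_k\|_{x_k}$ is genuinely \emph{two-sidedly} comparable to $\|\grad f(x_k)\|_{x_k}$ — the upper bound $\|\xi_k\|_{x_k} \le C\|\grad f(x_k)\|_{x_k}$ is what is actually needed for the $o(1)$ estimate and must be extracted from the CG termination test \ref{alg:cg-s4} and positive-definiteness, uniformly in $k$. A secondary subtlety is handling the step size $\alpha_k$: one should argue that for large $k$ either $\alpha_k = \alpha_0$ is accepted or, via \cref{eq:zeta}, $\alpha_k$ is bounded below by a positive constant times $\|\grad f(x_k)\|_{x_k}/\|\xi_k\|_{x_k}$, which again is bounded below by a constant; in either case $\alpha_k$ does not go to zero in a way that would spoil the ratio. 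Once these comparability estimates are in place, the rest is a routine chaining of inequalities.
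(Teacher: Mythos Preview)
Your overall strategy---bound the numerator by $o(\|z_k-x_k\|^2)$ via (B.2) and (B.4), and lower-bound $-m_k(z_k)$ through the Armijo condition---matches the paper's. But there is a genuine gap in how you obtain positive definiteness of $\Hess m_k(x_k)$ and hence in your claim that ``the inner CG solver runs without encountering negative or small curvature''.

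The formula $\Hess m_k(x_k) = \Hess f(x_k) + \sigma_k I$ is only valid in the exact case $H_k = \nabla^2 f(x_k)$; see \cref{eq:hess-formula}. In general, by \cref{remark:def-mk-bound},
\[
\Hess m_k(x_k)[\xi] = \Hess f(x_k)[\xi] + \mathbf{P}_{x_k}\bigl((H_k-\nabla^2 f(x_k))[\xi]\bigr) + \sigma_k \xi,
\]
so (B.3) alone does \emph{not} yield positive definiteness of $\Hess m_k(x_k)$. Moreover, (B.4) is only a Dennis--Mor\'e condition---control of $(H_k-\nabla^2 f(x_k))$ along the single direction $z_k-x_k$, not an operator-norm bound---so you cannot conclude from (B.1)--(B.4) that $\Hess m_k(x_k)$ is positive definite on all of $\T_{x_k}\M$, which is what ``CG never detects negative curvature'' would require. (That conclusion would need (B.5).) A related issue: with $\epsilon=0$ the constant $\lambda_k$ from \cref{lemma:descentdir} becomes zero, so routing the lower bound on $-m_k(z_k)$ through $\lambda_k$ gives nothing.

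The paper's fix is to show positive definiteness of $\Hess m_k(x_k)$ \emph{only along} $\xi_k$: using the retraction error bound $\|z_k-x_k-\alpha_k\xi_k\|=O(\|\xi_k\|_{x_k}^2)$ together with (B.4) (which applies to $z_k-x_k$), one gets $|\iprod{\xi_k}{\mathbf{P}_{x_k}((H_k-\nabla^2 f(x_k))[\xi_k])}_{x_k}|=o(\|\xi_k\|_{x_k}^2)$, hence $\iprod{\xi_k}{\Hess m_k(x_k)[\xi_k]}_{x_k}\geq \tfrac{\nu}{2}\|\xi_k\|_{x_k}^2$ for large $k$. Then the CG monotonicity property $\tilde m_k(\xi_k)\leq \tilde m_k(\eta_1)$ from \cref{lemma:cg-prop}(ii) combined with this one-directional bound yields the sharper descent estimate $\iprod{\grad m_k(x_k)}{\xi_k}_{x_k}\leq -\tfrac{\nu}{4}\|\xi_k\|_{x_k}^2$. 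This simultaneously gives a \emph{uniform} lower bound on the Armijo step size $\alpha_k$ and the clean inequality $-m_k(z_k)\geq \bar\delta\,\|z_k-x_k\|^2$, which matches the $o(\|z_k-x_k\|^2)$ numerator directly and finishes the proof without ever needing full positive definiteness or the behavior of CG at intermediate iterations.
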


\begin{proof} First, \cref{thm:convergence} implies that $x_*$ is stationary point of problem \cref{prob}, i.e., we have $\grad f(x_k) \to \grad f(x_*) = 0$ as $k \to \infty$. Moreover, since $\{x_k\}$ converges to $x_*$, the assumptions (A.2) and (A.4)--(A.5) are satisfied. We next use a connection between $\xi_k$ and $\grad f(x_k)$ that was established in the proof of \cref{lemma:descentdir}; it holds 
\be \label{eq:bound-xikgk} \|\xi_k\|_{x_k} \leq \min\{\epsilon^{-1},1\} n \cdot \|\grad f(x_k)\|_{x_k} \to 0, \quad k \to \infty. \ee
Hence, we have $\|\xi_k\| \leq \chi$ for all $k$ sufficiently large and thus, from \cref{eq:retra-bound} it follows
\be \label{eq:bound-zkxk}  \|z_k-x_k\|  \leq \kappa_1 \alpha_k \|\xi_k\|  \leq \min\{\epsilon^{-1},1\} n \kappa_1 \sqrt{\obar{\varpi}} \cdot \|\grad f(x_k)\|_{x_k}. \ee
Similar to \cite[Section B]{BouAbsCar16} and by combining \cref{eq:retra-bound}--\cref{eq:retra-bound-2}, we obtain
\be \label{eq:est-r1} \|z_k - x_k - \alpha_k \xi_k\| = \|R_{x_k}(\alpha_k \xi_k) - x_k - \alpha_k \xi_k\| \leq \frac{\obar{\varpi}\kappa_2}{2} \alpha_k^2 \|\xi_k\|_{x_k}^2 \ee
for all $k$ sufficiently large. Using the continuity of the Riemannian Hessian and (B.3) there exists $\nu > 0$ such that $ \iprod{\xi}{\Hess f(x_k)[\xi]}_{x_k} \geq \nu \|\xi\|_{x_k}^2$ for all $\xi \in \T_{x_k}\M$ and $k \in \N$ sufficiently large. Setting $m_k^F(x) := m_k(x) - \frac{\sigma_k}{2} \|x-x_k\|^2$, this implies 
%Next, using the Lipschitz continuity of the Hessian, assumption (B.3) and setting $y_k := z_k - x_k$, it follows
%
\[ \iprod{\xi_k}{\Hess m_k(x_k)[\xi_k]}_{x_k} \geq (\nu + \sigma_k) \|\xi_k\|_{x_k}^2 - \left |\iprod{\xi_k}{(\Hess m^F_k(x_k) - \Hess f(x_k))[\xi_k]}_{x_k} \right |. \]
Due to \cref{eq:hess-formula}, we have $(\Hess m^F_k(x_k) - \Hess f(x_k))[\xi_k] = \textbf{P}_{x_k}((H_k - \nabla^2 f(x_k))[\xi_k])$ and thus, it holds
\be \label{eq:est-mf-hess}
\begin{aligned}
\frac{\left |\iprod{\xi_k}{(\Hess m^F_k(x_k) - \Hess f(x_k))[\xi_k]}_{x_k} \right |}{\|\xi_k\|_{x_k}^2} & \\ & \hspace{-30ex} \leq c_1 \frac{\|(H_k - \nabla^2 f(x_k))[z_k-x_k]\|}{\|z_k-x_k\|}\frac{\|z_k-x_k\|}{\alpha_k \|\xi_k\|_{x_k}} + c_2 \frac{\|z_k - x_k - \alpha\xi_k\|}{\alpha_k \|\xi_k\|_{x_k}},
\end{aligned} \ee
where $c_1, c_2 > 0$ are suitable constants that only depend on $\ubar{\varpi}$, $\obar{\varpi}$, $\kappa_H$ and $\kappa_F$. By (B.4), \cref{eq:bound-zkxk}, and \cref{eq:est-r1}, the last term converges to zero as $k \to \infty$. Consequently, we can infer $\iprod{\xi_k}{\Hess m_k(x_k)[\xi_k]}_{x_k} \geq \frac{\nu+\sigma_k}{2} \|\xi_k\|_{x_k}^2 $ for all $k$ sufficiently large. This also implies that Algorithm \ref{alg:tN} does not stop in iteration $i=0$. Hence, applying \cref{lemma:cg-prop} (ii), we obtain
\begin{align*} \iprod{\grad m_k(x_k)}{\xi_k}_{x_k} & \leq \tilde m_k(\eta_1) - f(x_k) - \half \iprod{\xi_k}{\Hess m_k(x_k)[\xi_k]}_{x_k} \\ & \hspace{-7ex}\leq - \half \left ( \frac{\|g_k\|_{x_k}^4}{\iprod{g_k}{\Hess m_k(x_k)[g_k]}_{x_k}} + \frac{\nu+\sigma_k}{2} \|\xi_k\|_{x_k}^2 \right) \leq - \frac{\nu+\sigma_k}{4} \|\xi_k\|_{x_k}^2, \end{align*}
where $g_k := \grad f(x_k)$. Using this estimate in the proof of \cref{lemma:desc}, we can now derive a more refined bound for the step size $\alpha_k$. In particular, it holds
\[ m_k(R_{x_k}(t\xi_k)) \leq - \frac{\rho\nu}{4} t \|\xi_k\|^2_{x_k}, \quad \forall~t \in [0, \bar t], \,\, \text{with} \,\, \bar t:= \frac{1-\rho}{2\obar{\varpi}} \min \left \{ \frac{\nu}{\kappa_2\kappa_g + \kappa_1^2\kappa_H}, \frac{1}{\kappa_1^2} \right \} \]
and thus, we have 
\be \label{eq:bound-mk} - m_k(z_k) \geq \frac{\rho\nu\delta}{4\alpha_0} \bar t \cdot \|\xi_k\|^2_{x_k} \geq \frac{\rho\nu\delta\bar t}{4\alpha_0\kappa_1\obar{\varpi}} \cdot \|z_k-x_k\|^2 =: \bar \delta \cdot  \|z_k-x_k\|^2, \ee 
for all $k$ sufficiently large. Next, applying a second order Taylor expansion, it follows 
\[ f(z_k) - f(x_k) - m_k(z_k) \leq \half \iprod{(\nabla^2 f(x^\delta_k) - H_k)[z_k-x_k]}{z_k-x_k}, \] 
for some suitable $\delta_k \in [0,1]$ and $x^\delta_k := x_k + \delta_k(z_k-x_k)$. Using the continuity of $\nabla^2 f$, (B.4), and the bound \cref{eq:bound-mk}, we finally obtain
\[ 1 - \rho_k \leq \frac{1}{2\bar\delta} \left [ \frac{\|(\nabla^2 f(x_k)-H_k)[z_k-x_k]\|}{\|z_k - x_k\|} + \|\nabla^2 f(x_k^\delta) - \nabla^2 f(x_k)\| \right ] \to 0, \]
as $k \to \infty$. This finishes the proof.
\end{proof}

Next, we establish superlinear convergence of the proposed method. In comparison to \cref{lemma:dimin}, we need a stronger assumption on the matrices $H_k$ to guarantee that the CG method eventually only uses the natural stopping criterion in step \ref{alg:cg-s4}. In the following, let $\hat{f}_x := f \circ R_x$ denote the pullback of $f$ through $R_x$ at $x$ and let $0_x$ be the zero element of $\T_x\M$.

\begin{theorem} \label{thm:mainthm}
Suppose that \cref{assum:Lipofgrad} and the conditions {\rm(B.1)}--{\rm(B.3)} and {\rm(B.5)} are satisfied and let $\alpha_0 = 1$ and $\rho \in (0,\half)$. Then, the sequence $\{x_k \}$ converges q-superlinearly to $x_*$. 
\end{theorem}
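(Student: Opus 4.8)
The plan is to show that, in a neighborhood of $x_*$, Algorithm~\ref{alg:TR-ineact-newton} eventually reduces to a full (undamped) regularized Newton step with vanishing regularization parameter, and then to invoke the classical superlinear convergence analysis for inexact Newton methods in the pullback $\hat f_{x_k}$. First I would observe that under (B.1)--(B.3) and (B.5), Lemma~\ref{lemma:dimin} applies (its hypotheses (B.4) are weaker than (B.5)), so all iterations are eventually very successful, $\sigma_k \to 0$, and $x_{k+1} = z_k = R_{x_k}(\alpha_k \xi_k)$ for all large $k$. The key structural step is to prove that, for large $k$, the modified CG method (Algorithm~\ref{alg:tN}) terminates only via the residual criterion in step~\ref{alg:cg-s4}, never detecting negative curvature: since $\Hess f(x_*)$ is positive definite and $\epsilon = 0$ (by (B.3)), while $\Hess m_k(x_k) = \Hess f(x_k) + \sigma_k I$ with $\sigma_k \geq 0$, continuity of the Riemannian Hessian gives a uniform lower bound $\iprod{\xi}{\Hess m_k(x_k)[\xi]}_{x_k} \geq \nu\|\xi\|_{x_k}^2$ near $x_*$, so $\pi_i > 0$ always and $d_k = 0$. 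Hence $\xi_k = \eta_\ell$ is the (inexact) solution of the Newton system \cref{eq:trsub-nt} with residual norm $\leq \|r_0\|_{x_k}^\theta = \|\grad f(x_k)\|_{x_k}^\theta$, $\theta > 1$ — i.e. the forcing sequence is of order $\|\grad f(x_k)\|_{x_k}^{\theta - 1} \to 0$.

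Next I would show the unit step size $\alpha_k = 1$ is accepted by the Armijo rule \cref{eq:sub-Armijo} for large $k$ (this is where $\rho \in (0,\tfrac12)$ and $\alpha_0 = 1$ enter). Using the bound $\|\xi_k\|_{x_k} = O(\|\grad f(x_k)\|_{x_k})$ from \cref{eq:bound-xikgk}, the sharpened descent estimate $\iprod{\grad m_k(x_k)}{\xi_k}_{x_k} \leq -\tfrac{\nu+\sigma_k}{4}\|\xi_k\|_{x_k}^2$ derived in the proof of Lemma~\ref{lemma:dimin}, and a Taylor expansion of $m_k \circ R_{x_k}$ controlled via \cref{assum:Lipofgrad} (boundedness of $\tfrac{D}{dt}\tfrac{d}{dt}R$) together with the near-stationarity of the quadratic residual, one gets $m_k(R_{x_k}(\xi_k)) \leq \rho\,\iprod{\grad m_k(x_k)}{\xi_k}_{x_k}$ for all large $k$; the factor $\tfrac12$ versus $\rho$ is exactly the slack needed because the second-order term in the model is essentially $\tfrac12\iprod{\xi_k}{\Hess m_k(x_k)[\xi_k]}_{x_k}$. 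So $z_k = R_{x_k}(\xi_k)$, and $\xi_k$ satisfies $\grad f(x_k) + \Hess f(x_k)[\xi_k] + \sigma_k\xi_k = r_{k}$ with $\|r_k\|_{x_k} = O(\|\grad f(x_k)\|_{x_k}^\theta)$.

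Finally I would run the standard local convergence argument for the pullback $\hat f_{x_k}$ on the tangent space. Writing $\eta_k^* := R_{x_k}^{-1}(x_*) = \exp_{x_k}^{-1}(x_*) + o(\dist(x_k,x_*))$-type comparisons (using that $x_*$ is the nearby nondegenerate stationary point, $\grad f(x_*) = 0$, $\Hess f(x_*) \succ 0$), one compares $\xi_k$ with the exact Newton step for $\hat f_{x_k}$. The regularization $\sigma_k\xi_k$ contributes $O(\sigma_k\|\xi_k\|_{x_k}) = o(\|\grad f(x_k)\|_{x_k})$ since $\sigma_k \to 0$; the inexactness $r_k$ contributes $o(\|\grad f(x_k)\|_{x_k})$ since $\theta > 1$; the Hessian-approximation error contributes $o(\|\xi_k\|_{x_k})$ by (B.5) (in its Dennis--Moré-type consequence); and the retraction/metric distortion between $\hat f_{x_k}$ at $\xi_k$ and $f$ at $z_k$ is $O(\|\xi_k\|_{x_k}^2)$ by \cref{assum:Lipofgrad} and \cref{eq:est-r1}. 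Combining, $\dist(x_{k+1},x_*) = o(\dist(x_k,x_*))$, which is q-superlinear convergence. The main obstacle I anticipate is the bookkeeping in this last step: cleanly relating the Euclidean-model Newton direction $\xi_k$ to the \emph{Riemannian} Newton step for the pullback, since the model $m_k$ lives in $\cE$ while the analysis of $\hat f_{x_k}$ is intrinsic — the bridge is precisely formula \cref{eq:hess-formula} identifying $\Hess m_k(x_k) = \Hess f(x_k) + \sigma_k I$, plus the retraction estimates \cref{eq:retra-bound}--\cref{eq:est-r1}, but chasing all the $o(\cdot)$ terms through these identifications requires care.
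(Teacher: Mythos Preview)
Your proposal is correct and follows essentially the same route as the paper's proof: positive definiteness of $\Hess m_k(x_k)$ forces CG to exit only via the residual test, the unit step passes Armijo for $\rho<\tfrac12$, and then a pullback Taylor expansion combined with $\sigma_k\to 0$, the inexactness bound $\|r_k\|_{x_k}\le\|g_k\|_{x_k}^\theta$, and (B.5) yields $\|\grad\hat f_{x_k}(\xi_k)\|_{x_k}=o(\|g_k\|_{x_k})$, after which \cite[Lemmas 7.4.8--7.4.9]{opt-manifold-book} finish. One correction worth making when you write it up: the identity $\Hess m_k(x_k)=\Hess f(x_k)+\sigma_k I$ holds only in the exact case $H_k=\nabla^2 f(x_k)$; in general \cref{eq:hess-formula} gives an extra term $\mathbf{P}_{x_k}((H_k-\nabla^2 f(x_k))[\cdot])$, so (B.5) is already needed to secure uniform positive definiteness of $\Hess m_k(x_k)$ (cf.\ \cref{eq:hess-mf-m}), not only in the final $o(\cdot)$ bookkeeping.
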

\begin{proof}
For convenience, we again set $g_k := \grad f(x_k)$. We further note that the conditions (B.1) and (B.5) imply (A.2)--(A.5). Due to \cref{assum:Lipofgrad} and applying \cite[Proposition 19]{boumal2016global}, the following bound holds for any smooth function $h$ on $\M$
\be \label{eq:apphessofpullpack} \| \Hess h(x) - \Hess \hat{h}_x(0_x)\|_{x} \leq \beta_R \|\grad h(x)\|_{x}, \ee
where the operator norm is induced by the Riemannian metric on $\T_{x} \M$.
%Let $P_k(x):= m_k(x) - \frac{\sigma_k}{2}\|x -x_k\|^2$. 
Similar to the proof of \cref{lemma:dimin} and using (B.3), (B.5), and the uniform estimate
%
% Noticing that $\grad P_k(x) = g_k$ and $\nabla^2 P_k(x_k) = H_k$, we have 
\be \label{eq:hess-mf-m}  | \iprod{\xi}{(\Hess m_k^F(x_k) - \Hess f(x_k))[\xi]}_{x_k} | \leq c \cdot  \| H_k - \nabla^2 f(x_k) \| \|\xi\|_{x_k}^2, \ee
for $\xi \in \T_{x_k}\M$ and for some constant $c > 0$, we can infer that $\Hess m_k(x_k)$ is positive definite for all $k$ sufficiently large. Thus, the structure of Algorithm \ref{alg:tN} now implies
\be \label{eq:tcg-stop} \| g_k + \Hess m_k(x_k)[\xi_k] \|_{x_k} \leq \| g_k \|^\theta_{x_k}, \quad \theta > 1.   \ee
Also, by \cref{lemma:dimin}, we have $\sigma_k \to 0$ as $k \to \infty$. Hence, there exists $\bar \sigma$ such that $\sigma_k \leq \bar \sigma$ for all $k \in \N$. We next show that the full step size $\alpha_k = 1$ satisfies the Armijo condition \cref{eq:sub-Armijo} whenever $k$ is sufficiently large. First, by \cref{lemma:cg-prop} (ii) and \cref{remark:def-mk-bound}, we have
\be \label{eq:xik-low-bound} \|\xi_k\|_{x_k} \geq \|\eta_1\|_{x_k} = \frac{\|g_k\|^3_{x_k}}{\iprod{g_k}{\Hess m_k(x_k)[g_k]}_{x_k}} \geq \frac{\|g_k\|_{x_k}}{\kappa_{x_k}^M + \sigma_k}  \geq \frac{\|g_k\|_{x_k}}{\bar\kappa_M + \bar \sigma}, \ee
where $\bar\kappa_M$ is defined in \cref{lemma:tauk-bd}. Let $m_k^P := [\hat{m_k}]_{x_k} = m_k \circ R_{x_k}$ denote the pullback of the model function $m_k$. Combining \cref{eq:apphessofpullpack}, \cref{eq:tcg-stop}, and \cref{eq:xik-low-bound}, it holds
\begin{align*}  \| g_k + \Hess m^P_k(0_{x_k})[\xi_k]  \|_{x_k} & \\ & \hspace{-15ex} \leq \| (\Hess m^P_k(0_{x_k}) - \Hess m_k(x_k))[\xi_k] \|_{x_k} + \| g_k + \Hess m_k(x_k)[\xi_k] \|_{x_k} \\
& \hspace{-15ex} \leq  \beta_R \| g_k \|_{x_k} \|\xi_k\|_{x_k} + \|g_k\|_{x_k}^\theta \leq \underbracket{(\beta_R \|g_k\|_{x_k} + (\bar\kappa_M+\bar\sigma) \|g_k\|^{\theta-1}_{x_k})}_{=: \,\mathcal C_k(g_k)} \|\xi_k\|_{x_k}. \end{align*}
Similar to \cite[Proposition 5]{ring2012optimization} and applying a second order Taylor expansion, it holds
\begin{align*} {m}^P_k(\xi_k) - {m}^P_k(0_k) -  \half\iprod{g_k}{\xi_k}_{x_k} &= \half \iprod{g_k + \Hess m^P_k(\delta_k \xi_k)[\xi_k]}{\xi_k}_{x_k} \\ & \hspace{-20ex}\leq \left[ \mathcal C_k(g_k) + \|\Hess m^P_k(\delta_k \xi_k)-\Hess m^P_k(0_{x_k})\|_{x_k} \right] \|\xi_k\|^2_{x_k} = o(\|\xi_k\|_{x_k}^2), \end{align*}
where $\delta_k \in [0,1]$ is a suitable constant and we used the last estimate, $\mathcal C_k(g_k) \to 0$, and the continuity of the Hessian $\Hess m_k^P$. Therefore, due to $\rho < 0.5$ and $\alpha_0 = 1$, the full step size $\alpha_k = 1$ is chosen in \cref{eq:curvilinear} if $k$ is sufficiently large and we have $x_{k+1} = R_{x_k}(\xi_k)$. The remaining part of the proof now essentially follows \cite[Theorem 7.4.11]{opt-manifold-book} and \cite[Section 4.2.2]{Qi11}. In particular, calculating a first order Taylor expansion of the pullback gradient $\grad {\hat f}_{x_k}$ and using $\grad {\hat f}_{x_k}(0_{x_k}) = g_k$, the continuity of the pullback Hessian $\Hess {\hat f}_{x_k}$, \cref{eq:tcg-stop}, \cref{eq:apphessofpullpack}, \cref{eq:hess-mf-m}, (B.5), $\sigma_k \to 0$, and \cref{eq:bound-xikgk}, we obtain
\begin{align*} \|\grad {\hat f}_{x_k}(\xi_k)\|_{x_k} & \\ & \hspace{-13ex} \leq \|\grad {\hat f}_{x_k}(\xi_k) - g_k - \Hess {\hat f}_{x_k}(0_{x_k})[\xi_k] \|_{x_k} + \|g_k + \Hess m_k(x_k)[\xi_k]\|_{x_k} \\ & \hspace{-10ex} + \|(\Hess {\hat f}_{x_k}(0_{x_k}) - \Hess f(x_k))[\xi_k]\|_{x_k}  + \| (\Hess f(x_k) - \Hess m_k(x_k))[\xi_k]\|_{x_k} \\
& \hspace{-13ex} \leq \|\Hess {\hat f}_{x_k}(\tilde \delta_k \xi_k) - \Hess {\hat f}_{x_k}(0_{x_k})\|_{x_k} \|\xi_k\|_{x_k} + \|g_k\|^\theta_{x_k} + \beta_R \|g_k\|_{x_k} \|\xi_k\|_{x_k} \\ & \hspace{-10ex} + c \cdot \|H_k - \nabla^2 f(x_k)\| \|\xi_k\|_{x_k} + \sigma_k \|\xi_k\|_{x_k} \\ & \hspace{-13ex} = o(\|g_k\|_{x_k}), 
\end{align*} 
where $\tilde \delta_k \in [0,1]$ is again an appropriate constant. By \cite[Lemma 7.4.9]{opt-manifold-book}, this implies
\be \label{eq:th7} \frac{\|\grad f(x_{k+1})\|_{x_{k+1}}}{\|\grad f(x_{k})\|_{x_k}} \leq \tilde c \cdot \frac{\|\grad {\hat f}_{x_k}(\xi_k)\|_{x_k}}{\|g_k\|_{x_k}}\rightarrow 0, \quad \text{as} \quad k\rightarrow \infty, \ee
for some $\tilde c > 0$. Moreover, since the Hessian $\Hess f(x_*)$ is positive definite, \cite[Lemma 7.4.8]{opt-manifold-book} and \cref{eq:th7} further imply
\[ \frac{\dist(x_{k+1}, x_*)}{\dist ( x_{k}, x_*)} \rightarrow 0  \]
as $ k \to \infty$. (Here, $\dist(\cdot,\cdot)$ denotes the Riemannian geodesic distance, see \cite{opt-manifold-book}).
\end{proof}

\section{Numerical Results} \label{sec:num}
In this section, we test a variety of examples to illustrate the efficiency of
our adaptively regularized Newton method (\Optman). We mainly compare Algorithm \ref{alg:TR-ineact-newton} with the
Riemannian gradient method using the BB step size for initialization (GBB) and
the Riemannian trust region method (RTR) Manopt. All codes are written in
MATLAB. Note that Huang et al.~\cite{huang2016roptlib} implement a C-language
version of RTR to further accelerate the method. The efficiency of ARNT can also be
improved in a similar way. All experiments were performed on a workstation with Intel Xenon E5-2680 v3 processors at 2.50GHz($\times$12) and 128GB memory running CentOS 6.8 and MATLAB R2015b. %if it is implemented in the C language since .

 %and adaptive gradient method with BB initial step size, denoted by RGBB and RAdaGBB,

The default values of the GBB  parameters are set to $\rho = 10^{-4}, \delta =
0.2$,  and $\varrho = 0.85$. We have extensively tuned the stopping criterion of
the truncated CG method implemented in RTR and found that adding a rule $\|r_{j+1}\|
\leq \min\{0.1, 0.1\|r_0\|\}$ often improve the performance of RTR. All other default settings of RTR were used.
 For ARNT, we set $\eta_1 = 0.01$, $\eta_2 = 0.9$, $\gamma_0 = 0.2$, $\gamma_1 = 1$, $\gamma_2 = 10$,
 and $\sigma_k = \hat{\sigma}_k\|\grad f(x_k)\|$, where $\hat{\sigma}_k$ is
 updated by \cref{alg:tau-up} with $\hat{\sigma}_0 = 10$. The parameters in Algorithm
 \ref{alg:tN} are chosen as follows: $\rho = 10^{-4}$, $\delta = 0.2$, $\theta = 1$, and $T = 0.1$.
 Furthermore, when an estimation of the absolute value of the negative curvature, denoted by $\sigma_{est}$, is
 available at the $k$-th subproblem (see step \ref{alg:cg-S2} in Algorithm \ref{alg:tN}), we calculate
 \[ \label{alg:tau-up2} \sigma_{k+1}^{new} = \max \{\sigma_{k+1}, \,
 \sigma_{est} + \tilde \gamma \} ,\]
 with some small $\tilde\gamma \ge 0$. Then, the parameter $\sigma_{k+1}$ is
 reset to $\sigma_{k+1}^{new}$. This change does not affect our convergence results. 
 For fair comparisons, all algorithms are stopped when the norm of the Riemannian
gradient is less than $10^{-6}$ unless a different tolerance is specified. The algorithms also terminate
if a maximum number of iterations is reached. We use a maximum number of $10^4$ iteration in GBB
and $500$ in ARNT and RTR. In the implementation of ARNT and RTR, the GBB method is used to obtain a
better initial point. Here, GBB is run with stopping criterion $\|\grad f(x_k)\| \leq 10^{-3}$ and a maximum of $2000$ iterations.
The maximum number of inner iterations in ARNT is chosen adaptively depending on
the norm of the Riemannian gradient.  

In the subsequent tables, the column ``its'' represents the total
number of iterations in GBB,  while the two numbers of the column ``its'' in
ARNT and RTR are the number of outer iterations and the average numbers of inner
iterations. The columns ``f'', ``nrmG'' and ``time'' denote the final objective
value, the final norm of the  Riemannian gradient, and the CPU time that
the algorithms spent to reach the stopping criterions, respectively.

\begin{table}[t] \caption{Numerical results of Ex.~1 on low rank nearest
  correlation estimation} \label{num:ncm1}
	\centering
	\setlength{\tabcolsep}{1.1pt}
	\begin{tabular}{|c|ccc|ccc|ccc|ccc|}
		\hline
		
		& 	 \multicolumn{3}{c|}{GBB} 	& \multicolumn{3}{c|}{AdaGBB} & 	 \multicolumn{3}{c|}{ARNT}  & 	 \multicolumn{3}{c|}{RTR}\\ \hline
		$p$ 	  	 & 	 its & 	 nrmG &	 time & 	 its & 	 nrmG &	 time & 	 its & 	 nrmG &	 time & 	 its & 	 nrmG &	 time\\ \hline
        \multicolumn{13}{|c|}{$H = \mathbf{1},\, n =500$} \\ \hline
	  5 & 	 207 & 	 3.5e-7 &	 1.2 & 	 227 & 	 8.8e-7 &	 0.9 & 	 24(  14) & 	 1.2e-7 &	 1.3 & 	 31(   8) & 	 2.7e-7 &	 1.2\\ \hline 
	          10 & 	 173 & 	 8.7e-7 &	 0.5 & 	 215 & 	 9.6e-7 &	 0.5 & 	 11(  11) & 	 3.2e-7 &	 0.6 & 	 11(  12) & 	 6.7e-7 &	 0.7\\ \hline 
	          20 & 	 293 & 	 5.3e-7 &	 0.9 & 	 352 & 	 6.3e-7 &	 1.1 & 	 13(  18) & 	 1.2e-7 &	 0.9 & 	 12(  21) & 	 8.4e-7 &	 1.0\\ \hline 
	          50 & 	 2622 & 	 1.0e-6 &	 9.4 & 	 1306 & 	 8.6e-7 &	 5.8 & 	 43(  37) & 	 2.4e-7 &	 5.1 & 	 39(  20) & 	 5.5e-7 &	 3.0\\ \hline 
	         100 & 	 3286 & 	 9.0e-7 &	 17.4 & 	 3614 & 	 9.9e-7 &	 13.6 & 	 52(  51) & 	 6.0e-7 &	 11.1 & 	 52(  30) & 	 3.6e-7 &	 6.8\\ \hline 
	         150 & 	 9358 & 	 9.9e-7 &	 47.4 & 	 10000 & 	 3.4e-6 &	 62.5 & 	 51(  75) & 	 1.6e-7 &	 18.7 & 	 55(  54) & 	 5.1e-7 &	 13.8\\ \hline 
	         200 & 	 10000 & 	 2.8e-5 &	 82.1 & 	 10000 & 	 2.1e-4 &	 46.7 & 	 70(  70) & 	 5.6e-7 &	 31.0 & 	 77(  49) & 	 9.2e-7 &	 18.5\\ \hline   
   \multicolumn{13}{|c|}{$H \ne \mathbf{1},\, n=500$} \\ \hline
	5 & 	 1016 & 	 9.3e-7 &	 5.1 & 	 744 & 	 9.3e-7 &	 3.4 & 	 115(  19) & 	 1.9e-7 &	 6.0 & 	 290(  21) & 	 3.5e-7 &	 20.4\\ \hline 
	        10 & 	 722 & 	 1.0e-6 &	 3.3 & 	 431 & 	 5.6e-7 &	 1.3 & 	 40(  61) & 	 4.9e-7 &	 6.3 & 	 28(  40) & 	 6.8e-7 &	 3.6\\ \hline 
	        20 & 	 923 & 	 7.8e-7 &	 3.1 & 	 715 & 	 4.1e-7 &	 4.8 & 	 20(  70) & 	 8.2e-7 &	 4.4 & 	 23(  52) & 	 7.1e-7 &	 3.9\\ \hline 
	        50 & 	 10000 & 	 1.6e+0 &	 36.8 & 	 10000 & 	 3.1e-6 &	 65.3 & 	 69( 105) & 	 6.0e-7 &	 24.0 & 	 116( 115) & 	 7.0e-7 &	 50.3\\ \hline 
	       100 & 	 10000 & 	 1.2e-1 &	 47.0 & 	 10000 & 	 4.5e-2 &	 67.6 & 	 345( 119) & 	 5.0e-7 &	 154.8 & 	 449( 169) & 	 9.7e-7 &	 331.2\\ \hline 
	       150 & 	 10000 & 	 3.6e-1 &	 49.5 & 	 10000 & 	 5.4e-2 &	 43.8 & 	 500( 119) & 	 1.4e-1 &	 269.5 & 	 500( 168) & 	 5.9e-1 &	 385.9\\ \hline 
	       200 & 	 10000 & 	 8.3e-2 &	 65.5 & 	 10000 & 	 6.5e-2 &	 47.8 & 	 500( 125) & 	 7.0e-2 &	 341.1 & 	 500( 165) & 	 2.0e-1 &	 414.1\\ \hline 						
	\end{tabular}	
\end{table}

\subsection{Low Rank Nearest Correlation Matrix Estimation}
Given a symmetric matrix $C$ and a nonnegative symmetric weight matrix $H$, the low rank nearest correlation matrix problem is given as
\be \label{ncm}~\min_{X\in \R^{n \times n}} \, \half \|H \odot (X - C)\|_F^2, \quad\st\quad X_{ii} = 1, \quad \rank(X) \leq p, \quad X \succeq 0,  \ee
for all $i = 1,...,n$ and for $p \leq n$. By expressing $X = V^\top V$ with $V = [V_1, \ldots, V_n] \in
\R^{p\times n}$, problem \cref{ncm} can be converted into:
\[ \min_{V \in \R^{p\times n}}~\half \|H \odot(V^\top V -C)\|_F^2, \quad \st \quad \|V_i\|_2 = 1, \quad i =1,..., n. \]
In this subsection, we also use a version of the Adagrad method \cref{adagrad} in our numerical comparison. It is
dubbed as AdaGBB because its setting is similar to GBB. We select a few typical test problems as follows.

\begin{table}[t] \caption{Numerical results of Ex.~3 on low rank nearest
  correlation estimation} \label{num:ncm3}
	\centering
	\setlength{\tabcolsep}{1.0pt}
	\begin{tabular}{|c|ccc|ccc|ccc|ccc|}
		\hline
		
		& 	 \multicolumn{3}{c|}{GBB} 	& \multicolumn{3}{c|}{AdaGBB} & 	 \multicolumn{3}{c|}{ARNT}  & 	 \multicolumn{3}{c|}{RTR}\\ \hline
		$p$ 	 & 	its & 	 nrmG &	 time & 	 its & 	 nrmG &	 time & 	 its & 	 nrmG &	 time & 	 its & 	 nrmG &	 time\\ \hline
		 5 & 	 10000 & 	 1.7e+02 &	 196.5 & 	 4178 & 	 6.9e-7 &	 41.3 & 	 260(   8) & 	 9.4e-7 &	 38.1 & 	 500(  12) & 	 8.8e-2 &	 78.4\\ \hline 
		         10 & 	 10000 & 	 3.0e-4 &	 207.4 & 	 4973 & 	 8.2e-7 &	 103.8 & 	 347(  12) & 	 8.4e-7 &	 58.9 & 	 500(  17) & 	 9.3e-2 &	 102.5\\ \hline 
		         20 & 	 10000 & 	 1.5e-4 &	 198.3 & 	 5089 & 	 7.1e-7 &	 86.6 & 	 237(  24) & 	 8.3e-7 &	 63.4 & 	 500(  23) & 	 9.7e-2 &	 152.0\\ \hline 
		         50 & 	 10000 & 	 9.1e-5 &	 288.1 & 	 3675 & 	 1.0e-6 &	 90.2 & 	 34(  58) & 	 2.0e-7 &	 38.1 & 	 63(  82) & 	 7.7e-7 &	 80.2\\ \hline 
		        100 & 	 10000 & 	 3.6e-4 &	 181.6 & 	 10000 & 	 2.5e-6 &	 258.0 & 	 26( 118) & 	 7.1e-7 &	 50.1 & 	 19( 428) & 	 7.1e-7 &	 120.4\\ \hline 
		        150 & 	 10000 & 	 3.5e-2 &	 124.2 & 	 10000 & 	 4.4e-5 &	 241.7 & 	 35( 134) & 	 3.0e-7 &	 76.1 & 	 18( 688) & 	 9.0e-7 &	 173.2\\ \hline 
		        200 & 	 10000 & 	 3.5e-2 &	 153.7 & 	 10000 & 	 7.2e-5 &	 245.3 & 	 37( 130) & 	 5.5e-7 &	 78.4 & 	 16( 758) & 	 8.3e-7 &	 162.0\\ \hline  		   		
	\end{tabular}
	
\end{table}

\begin{table}[t] \caption{Numerical results of Ex.~2 on  low rank nearest
  correlation estimation} \label{num:ncm21}
	\centering
	\setlength{\tabcolsep}{1.2pt}
	\begin{tabular}{|c|ccc|ccc|ccc|ccc|}
		\hline
		
		& 	 \multicolumn{3}{c|}{GBB} 	& \multicolumn{3}{c|}{AdaGBB} & 	 \multicolumn{3}{c|}{ARNT}  & 	 \multicolumn{3}{c|}{RTR}\\ \hline
		$p$ 	 & 	 its & 	 nrmG &	 time & 	 its & 	 nrmG &	 time & 	 its & 	 nrmG &	 time & 	 its & 	 nrmG &	 time\\ \hline
		\multicolumn{13}{|c|}{$H = \mathbf{1}$ (Lymph, $n=587$)} \\ \hline
         5 & 	 252 & 	 7.3e-7 &	 1.8 & 	 335 & 	 3.2e-7 &	 1.3 & 	 19(  14) & 	 5.3e-7 &	 1.9 & 	 35(   9) & 	 3.2e-7 &	 2.3\\ \hline 
                 10 & 	 242 & 	 9.0e-7 &	 1.0 & 	 309 & 	 9.7e-7 &	 1.2 & 	 15(  16) & 	 2.7e-7 &	 1.1 & 	 28(  16) & 	 1.5e-7 &	 1.9\\ \hline 
                 20 & 	 372 & 	 9.6e-7 &	 1.9 & 	 402 & 	 9.4e-7 &	 1.8 & 	 17(  21) & 	 8.0e-7 &	 1.9 & 	 12(  21) & 	 3.6e-7 &	 1.4\\ \hline 
                 50 & 	 756 & 	 7.1e-7 &	 4.1 & 	 979 & 	 9.8e-7 &	 4.9 & 	 18(  30) & 	 5.2e-7 &	 3.0 & 	 20(  26) & 	 1.9e-7 &	 2.8\\ \hline 
                100 & 	 1614 & 	 8.9e-7 &	 10.3 & 	 2473 & 	 9.7e-7 &	 14.3 & 	 33(  54) & 	 1.9e-7 &	 10.0 & 	 28(  42) & 	 8.7e-7 &	 6.3\\ \hline 
                150 & 	 10000 & 	 1.8e-6 &	 73.2 & 	 10000 & 	 5.5e-6 &	 62.8 & 	 34(  49) & 	 5.6e-7 &	 11.2 & 	 31(  40) & 	 4.5e-7 &	 8.0\\ \hline 
                200 & 	 10000 & 	 7.0e-6 &	 79.5 & 	 10000 & 	 1.2e-5 &	 69.8 & 	 25(  44) & 	 8.9e-7 &	 8.9 & 	 28(  44) & 	 5.2e-7 &	 8.7\\ \hline   
		\multicolumn{13}{|c|}{$H \ne \mathbf{1}$ (Lymph, $n=587$)}  \\ \hline
           5 & 	 1691 & 	 9.2e-7 &	 14.1 & 	 723 & 	 6.8e-7 &	 5.2 & 	 72(  24) & 	 2.0e-7 &	 6.4 & 	 500(  18) & 	 2.8e-3 &	 43.6\\ \hline 
                  10 & 	 1774 & 	 9.8e-7 &	 13.2 & 	 742 & 	 8.4e-7 &	 4.1 & 	 34(  31) & 	 6.8e-7 &	 5.3 & 	 500(  25) & 	 9.0e-5 &	 65.0\\ \hline 
                  20 & 	 2260 & 	 8.0e-7 &	 19.5 & 	 836 & 	 1.0e-6 &	 4.8 & 	 101(  43) & 	 8.0e-7 &	 16.8 & 	 500(  36) & 	 6.9e-3 &	 93.3\\ \hline 
                  50 & 	 6468 & 	 9.7e-7 &	 47.2 & 	 2784 & 	 9.7e-7 &	 14.5 & 	 83(  83) & 	 5.5e-7 &	 29.6 & 	 114(  81) & 	 3.5e-7 &	 48.3\\ \hline 
                 100 & 	 8695 & 	 9.7e-7 &	 67.6 & 	 4897 & 	 9.9e-7 &	 26.9 & 	 26( 109) & 	 2.6e-7 &	 15.5 & 	 32( 108) & 	 2.5e-7 &	 18.3\\ \hline 
                 150 & 	 10000 & 	 2.2e-3 &	 69.8 & 	 10000 & 	 4.0e-3 &	 60.0 & 	 47( 108) & 	 7.2e-7 &	 34.4 & 	 46(  99) & 	 7.5e-7 &	 30.2\\ \hline 
                 200 & 	 10000 & 	 5.0e-3 &	 84.2 & 	 10000 & 	 2.3e-3 &	 72.9 & 	 71( 110) & 	 5.2e-7 &	 60.2 & 	 58( 129) & 	 5.9e-7 &	 57.1\\ \hline 

        \multicolumn{13}{|c|}{$H = \mathbf{1}$ (ER, $n =692$)} \\ \hline
		 5 & 	 359 & 	 9.8e-7 &	 2.7 & 	 500 & 	 5.7e-7 &	 2.3 & 	 20(  14) & 	 1.9e-7 &	 1.8 & 	 49(  13) & 	 5.9e-7 &	 3.4\\ \hline 
		         10 & 	 198 & 	 7.8e-7 &	 0.9 & 	 330 & 	 9.9e-7 &	 1.8 & 	 12(  20) & 	 2.1e-7 &	 1.6 & 	 14(  16) & 	 9.8e-8 &	 1.5\\ \hline 
		         20 & 	 384 & 	 5.2e-7 &	 2.0 & 	 481 & 	 9.9e-7 &	 3.1 & 	 20(  31) & 	 4.6e-7 &	 3.5 & 	 30(  25) & 	 1.3e-7 &	 4.2\\ \hline 
		         50 & 	 585 & 	 6.8e-7 &	 3.5 & 	 951 & 	 9.3e-7 &	 7.3 & 	 17(  34) & 	 1.5e-7 &	 4.5 & 	 16(  29) & 	 2.9e-7 &	 3.7\\ \hline 
		        100 & 	 6176 & 	 1.0e-6 &	 43.6 & 	 5882 & 	 9.7e-7 &	 34.7 & 	 54(  70) & 	 6.9e-7 &	 30.3 & 	 47(  66) & 	 1.2e-7 &	 22.1\\ \hline 
		        150 & 	 1198 & 	 9.7e-7 &	 10.1 & 	 2895 & 	 9.5e-7 &	 21.0 & 	 26(  58) & 	 6.8e-7 &	 14.4 & 	 20(  51) & 	 5.1e-7 &	 8.8\\ \hline 
		        200 & 	 10000 & 	 6.7e-6 &	 91.7 & 	 10000 & 	 4.6e-5 &	 79.2 & 	 60(  68) & 	 6.1e-7 &	 40.8 & 	 29(  57) & 	 7.1e-7 &	 15.6\\ \hline 
		\multicolumn{13}{|c|}{$H \ne \mathbf{1}$ (ER, $n =692$)} \\ \hline
		  5 & 	 1382 & 	 9.2e-7 &	 13.7 & 	 708 & 	 9.6e-7 &	 3.8 & 	 24(  23) & 	 8.0e-7 &	 4.2 & 	 500(  20) & 	 2.8e-2 &	 48.6\\ \hline 
		         10 & 	 1686 & 	 9.3e-7 &	 15.8 & 	 640 & 	 8.1e-7 &	 3.5 & 	 27(  33) & 	 3.3e-7 &	 5.6 & 	 500(  26) & 	 9.7e-4 &	 63.5\\ \hline 
		         20 & 	 2123 & 	 9.6e-7 &	 18.5 & 	 1159 & 	 9.8e-7 &	 6.6 & 	 138(  39) & 	 6.8e-7 &	 32.3 & 	 500(  41) & 	 5.4e-4 &	 107.8\\ \hline 
		         50 & 	 4923 & 	 6.3e-7 &	 31.4 & 	 3138 & 	 1.0e-6 &	 31.3 & 	 100(  79) & 	 9.4e-7 &	 48.4 & 	 214(  83) & 	 8.1e-7 &	 103.3\\ \hline 
		        100 & 	 10000 & 	 1.1e-4 &	 71.6 & 	 10000 & 	 1.2e-6 &	 65.7 & 	 49( 110) & 	 1.6e-7 &	 43.6 & 	 56( 163) & 	 1.7e-7 &	 64.8\\ \hline 
		        150 & 	 10000 & 	 4.4e-4 &	 84.0 & 	 5508 & 	 9.5e-7 &	 39.2 & 	 33( 117) & 	 4.9e-7 &	 35.9 & 	 35( 145) & 	 8.4e-7 &	 42.2\\ \hline 
		        200 & 	 10000 & 	 2.1e-3 &	 94.0 & 	 10000 & 	 4.7e-4 &	 79.0 & 	 83( 159) & 	 8.6e-7 &	 132.9 & 	 52( 221) & 	 3.6e-7 &	 102.6\\ \hline  
		
	\end{tabular}
	
\end{table}

\begin{enumerate}
  \item[Ex. 1.] Let $n = 500$ and let $C_{ij} = 0.5 + e^{-0.05|i-j|}$
    for $i,j = 1,..., n$. The weight matrix $H$ is either $\mathbf{1}$ or a
    random matrix whose entries are mostly uniformly distributed in $[0.1,10]$
    except that $200$ entries are distributed in $[0.01,100]$.
  \item[Ex. 2.] The matrix $C$ is obtained from the real gene correlation
    matrices such as Lymph, ER, Hereditarybc and Leukemia. The weight matrix $H$
    is either $\mathbf{1}$ or a random matrix whose entries are set
    as in Ex. 1.% are uniformly distributed in $[0.1,10]$ except for $200$ entries in $[0.01,100]$.
  \item[Ex. 3.] Let $n = 943$. The matrix $C$ is based on $100,000$ ratings for
    $1682$ movies by $943$ users from
    the Movielens data sets. The weight matrix $H$ is provided by T. Fushiki at Institute of Statistical Mathematics, Japan.
\end{enumerate}

The detailed numerical results are reported in Tables
\ref{num:ncm1}-\ref{num:ncm22}. For Ex.~1, all methods perform well if $p$ is
small. For the cases with $H\neq \mathbf{1}$, ARNT is the best when $p=50$ and
$p=100$ while all of them fail 
  when $p=150$ and $p=200$.
 For Ex.~2,  GBB may not converge when $p$ is large, and ARNT is efficient
whenever $p$ is small or large. In
particular, ARNT is better than RTR on ER and Leukemia with $H\neq \mathbf{1}$
and RTR may fail on a few instances. 
 %and RTR spends much more time when $p$ is small.
For  Ex.~3,  we can see that GBB and
RTR fail to converge when $p$ is small, while ARNT and AdaGBB still work. In fact, we
observe negative curvatures of the Hessian at many iterations of  
ARNT and the strategy \cref{eq:subline} indeed helps the convergence.

\begin{table} \caption{Numerical results of Ex.~2 on low rank nearest
  correlation estimation (continued)} \label{num:ncm22}
	\centering
	\setlength{\tabcolsep}{1.5pt}
	\begin{tabular}{|c|ccc|ccc|ccc|ccc|}
		\hline
		
		& 	 \multicolumn{3}{c|}{GBB} 	& \multicolumn{3}{c|}{AdaGBB} & 	 \multicolumn{3}{c|}{ARNT}  & 	 \multicolumn{3}{c|}{RTR}\\ \hline
		$p$ 	 & 	 its & 	 nrmG &	 time & 	 its & 	 nrmG &	 time & 	 its & 	 nrmG &	 time & 	 its & 	 nrmG &	 time\\ \hline
		\multicolumn{13}{|c|}{$H = \mathbf{1}$ (Hereditarybc, $n =1869$)} \\ \hline
         5 & 	 156 & 	 7.9e-7 &	 4.6 & 	 189 & 	 4.0e-8 &	 3.9 & 	 11(  14) & 	 9.8e-7 &	 8.9 & 	 13(   7) & 	 1.5e-7 &	 6.2\\ \hline 
                 10 & 	 157 & 	 8.8e-7 &	 3.5 & 	 220 & 	 9.3e-7 &	 6.5 & 	 3(  10) & 	 9.8e-8 &	 5.6 & 	 3(  10) & 	 9.7e-8 &	 5.6\\ \hline 
                 20 & 	 299 & 	 6.3e-7 &	 7.0 & 	 304 & 	 1.9e-7 &	 9.6 & 	 17(  23) & 	 4.5e-7 &	 18.6 & 	 18(  25) & 	 4.7e-7 &	 16.8\\ \hline 
                 50 & 	 10000 & 	 9.7e-5 &	 254.3 & 	 10000 & 	 7.5e-5 &	 266.5 & 	 32(  15) & 	 8.5e-7 &	 23.2 & 	 33(  18) & 	 8.8e-7 &	 23.5\\ \hline 
                100 & 	 10000 & 	 1.7e-5 &	 294.3 & 	 10000 & 	 6.6e-5 &	 369.1 & 	 33(  14) & 	 5.2e-7 &	 24.8 & 	 33(  17) & 	 4.5e-7 &	 25.9\\ \hline 
                150 & 	 10000 & 	 5.3e-5 &	 345.3 & 	 10000 & 	 1.1e-4 &	 352.4 & 	 34(  14) & 	 7.5e-7 &	 27.1 & 	 34(  16) & 	 6.4e-7 &	 29.5\\ \hline 
                200 & 	 10000 & 	 2.7e-5 &	 372.3 & 	 10000 & 	 3.7e-5 &	 342.1 & 	 35(  15) & 	 5.9e-7 &	 34.1 & 	 36(  20) & 	 6.7e-7 &	 35.2\\ \hline    
		\multicolumn{13}{|c|}{$H \ne \mathbf{1}$ (Hereditarybc, $n =1869$)} \\ \hline
         5 & 	 256 & 	 6.6e-7 &	 6.2 & 	 238 & 	 9.3e-7 &	 6.3 & 	 12(  15) & 	 6.6e-7 &	 9.9 & 	 52(  15) & 	 4.8e-7 &	 28.5\\ \hline 
                 10 & 	 196 & 	 7.8e-7 &	 5.3 & 	 242 & 	 9.6e-7 &	 7.3 & 	 8(  17) & 	 7.1e-7 &	 7.8 & 	 7(  17) & 	 5.6e-7 &	 7.2\\ \hline 
                 20 & 	 361 & 	 9.9e-7 &	 12.0 & 	 315 & 	 9.6e-7 &	 10.6 & 	 7(  19) & 	 1.2e-7 &	 11.7 & 	 6(  18) & 	 6.8e-7 &	 10.8\\ \hline 
                 50 & 	 10000 & 	 1.2e-3 &	 303.5 & 	 10000 & 	 1.9e-3 &	 317.7 & 	 39(  23) & 	 9.9e-8 &	 38.8 & 	 38(  30) & 	 3.7e-7 &	 43.4\\ \hline 
                100 & 	 10000 & 	 1.3e-3 &	 352.9 & 	 10000 & 	 2.2e-3 &	 338.9 & 	 34(  22) & 	 8.4e-7 &	 38.0 & 	 39(  29) & 	 5.3e-7 &	 49.2\\ \hline 
                150 & 	 10000 & 	 2.4e-3 &	 386.3 & 	 10000 & 	 4.1e-3 &	 389.9 & 	 38(  24) & 	 5.6e-7 &	 45.6 & 	 41(  31) & 	 4.9e-7 &	 60.0\\ \hline 
                200 & 	 10000 & 	 1.8e-3 &	 410.0 & 	 10000 & 	 7.3e-4 &	 359.1 & 	 35(  24) & 	 7.7e-7 &	 52.6 & 	 41(  30) & 	 7.3e-7 &	 63.1\\ \hline 
        \multicolumn{13}{|c|}{$H = \mathbf{1}$ (Leukemia, $n =1255$)} \\ \hline
		  5 & 	 272 & 	 8.9e-7 &	 4.2 & 	 261 & 	 4.8e-7 &	 2.9 & 	 15(  16) & 	 4.0e-7 &	 5.6 & 	 23(   9) & 	 4.0e-7 &	 5.2\\ \hline 
		          10 & 	 540 & 	 9.6e-7 &	 12.5 & 	 453 & 	 8.2e-7 &	 5.7 & 	 23(  20) & 	 5.6e-7 &	 8.4 & 	 48(  21) & 	 6.8e-7 &	 13.8\\ \hline 
		          20 & 	 1064 & 	 9.6e-7 &	 23.0 & 	 1602 & 	 1.0e-6 &	 26.2 & 	 34(  31) & 	 4.1e-7 &	 14.8 & 	 131(  25) & 	 1.5e-7 &	 39.4\\ \hline 
		          50 & 	 1917 & 	 8.5e-7 &	 32.9 & 	 2535 & 	 4.0e-7 &	 35.8 & 	 33(  49) & 	 2.6e-7 &	 26.0 & 	 28(  42) & 	 1.6e-7 &	 17.4\\ \hline 
		         100 & 	 10000 & 	 4.1e-5 &	 169.4 & 	 10000 & 	 2.4e-5 &	 156.9 & 	 35(  27) & 	 9.6e-7 &	 19.5 & 	 35(  28) & 	 5.6e-7 &	 18.0\\ \hline 
		         150 & 	 10000 & 	 3.1e-5 &	 194.8 & 	 10000 & 	 9.2e-5 &	 184.4 & 	 40(  25) & 	 4.8e-7 &	 24.4 & 	 36(  27) & 	 9.5e-7 &	 20.0\\ \hline 
		         200 & 	 10000 & 	 3.2e-5 &	 232.1 & 	 10000 & 	 3.9e-4 &	 200.1 & 	 37(  25) & 	 5.0e-7 &	 24.8 & 	 36(  27) & 	 4.6e-7 &	 22.5\\ \hline   
       \multicolumn{13}{|c|}{$H \ne \mathbf{1}$ (Leukemia, $n=1255$)} \\ \hline
         5 & 	 1404 & 	 5.9e-7 &	 55.0 & 	 762 & 	 5.5e-7 &	 13.3 & 	 44(  20) & 	 3.8e-7 &	 16.3 & 	 500(  16) & 	 3.7e-3 &	 137.7\\ \hline 
                 10 & 	 680 & 	 9.7e-7 &	 21.8 & 	 608 & 	 9.8e-7 &	 13.0 & 	 23(  22) & 	 9.5e-7 &	 11.2 & 	 500(  20) & 	 2.1e-3 &	 169.5\\ \hline 
                 20 & 	 2461 & 	 9.4e-7 &	 77.1 & 	 2250 & 	 9.3e-7 &	 51.6 & 	 59(  32) & 	 9.5e-7 &	 31.0 & 	 500(  34) & 	 3.5e-4 &	 289.0\\ \hline 
                 50 & 	 3354 & 	 9.7e-7 &	 82.0 & 	 1790 & 	 7.8e-7 &	 47.1 & 	 33(  86) & 	 4.8e-7 &	 48.2 & 	 58(  74) & 	 1.2e-7 &	 79.2\\ \hline 
                100 & 	 10000 & 	 1.8e-2 &	 170.5 & 	 10000 & 	 1.8e-3 &	 158.6 & 	 36(  51) & 	 7.7e-7 &	 37.5 & 	 44(  53) & 	 4.9e-7 &	 48.7\\ \hline 
                150 & 	 10000 & 	 3.4e-3 &	 194.9 & 	 10000 & 	 2.1e-3 &	 197.8 & 	 43(  52) & 	 4.8e-7 &	 48.2 & 	 51(  52) & 	 4.8e-7 &	 57.1\\ \hline 
                200 & 	 10000 & 	 3.9e-3 &	 216.5 & 	 10000 & 	 4.3e-2 &	 205.8 & 	 46(  50) & 	 4.6e-7 &	 55.9 & 	 50(  52) & 	 6.5e-7 &	 65.4\\ \hline    
		
	\end{tabular}
	
\end{table}

\subsection{Simple Nonlinear Eigenvalue Problems}% \cite{zhao2015riemannian}}
A simplified model problem for density functional theory is given by
\[ \min_{X \in \R^{n \times k}}~\frac{1}{2} \tr(X^\top LX) + \frac{\alpha}{4}\rho(X)^\top L^\dag(\rho(X)), \quad \st \quad X^\top X = I_k, \]
where $L \in \R^{n \times n}$ is a symmetric matrix and $\rho(X)$ is a vector
whose components are the diagonal elements of $XX^\top$.

In this numerical experiment, $L$ is set to a tridiagonal matrix whose main
diagonal elements are $2$ and the secondary diagonal elements are $-1$. A series of
experiments using different values of $n,k$ and $\alpha$ are conducted.
Specifically, in the first case, we fix $p = 50,\alpha =1$, and try different $n$
ranging from $2000$ to $50000$. Then, we set $n =10000, \alpha = 1$ and vary $p$
from $20$ to $100$. At last, the performance on different values for $\alpha$ is also compared.
The detailed numerical results are reported in Tables \ref{num:noneig1},
\ref{num:noneig2} and \ref{num:noneig3}, respectively. In Table
\ref{num:noneig1},  we can see that ARNT is most efficient, while the
performance of RTR sometimes is not stable. Similar results are shown in Table
\ref{num:noneig2}. In Table \ref{num:noneig3}, ARNT is better than RTR,
especially for large $\alpha$. We also observe that GBB often performs comparable to ARNT in CPU time in this example.

\subsection{Kohn-Sham Total Energy Minimization} %\cite{wen2013adaptive}
Using a suitable discretization scheme, we can formulate a finite dimensional
approximation to the continuous KS minimization problem \cite{wen2013adaptive}  as
\[ \min_{X \in \C^{n \times p}}~f(X) \quad \st \quad X^*X = I,  \]
where $f(X):= \frac{1}{4} \tr(X^*LX) + \half \tr(X^*V_{ion}X) + \half
\sum_i\sum_l|x_i^*\omega_l| + \frac{1}{4}\rho L^\dag + \half e^\top
\epsilon_{xc} (\rho)$, $X = [x_1, \cdots, x_p] \in \C^{n \times p}$,  $\rho(X)
:= \diag(XX^*)$, $L$ is a finite dimensional Laplacian operator, $V_{ion}$
corresponds to the ionic pseudopotentials, $w_l$ represents a discretized
pseudopotential reference projection function and  $\epsilon_{xc}$ is related to
the exchange correlation energy.%. The matrix $L^\dag$ corresponds to the pseudoinverse of $L$ and the fourth term denotes the Hartree potential energy, which is used to model the classical electrostatic average interaction between electrons. The final term denotes the exchange correlation energy, which is used to describe the nonclassical interaction between electrons.

Our experiments are based on the KSSOLV package \cite{YangMezaLeeWang2009}. As in \cite{wen2013adaptive}, we use the Wirtinger calculus \cite{kreutz2009complex} to compute the complex gradient and Hessian of the function $f$. Let us also note that the Lipschitz continuity required in Assumption (A.1) may not be satisfied for all types of exchange correlations. However, for the correlation that is defined by the Perdew-Zunger formula and used in this example, Lipschitz continuity was established in \cite[Lemma 3.3]{ulbrich2015proximal}. In
addition to GBB and  RTR, we further compare ARNT with the self-consistent field
(SCF) iteration and the regularized trust-region method TRQH in
\cite{wen2013adaptive}. In the implementation of TRQH, RTR and ARNT, we use the
same initial point obtained by GBB. %until the norm of the Riemannian gradient is
%less than $10^{-3}$ or 2000 iterations are reached. 
Note that TRQH
essentially coincides with ARNT except that the subproblem \cref{eq:tr-sub} is
solved by GBB.

\begin{table}[t] \caption{Numerical results on nonlinear eigenspace with fixed $(p,\alpha) = (30,1000)$.} \label{num:noneig1}
\centering
\setlength{\tabcolsep}{1.5pt}
\begin{tabular}{|c|ccc|ccc|ccc|}
  \hline
& 	 \multicolumn{3}{c|}{GBB} & 	 \multicolumn{3}{c|}{ARNT}  & 	 \multicolumn{3}{c|}{RTR}\\ \hline
$n$	 & 	 its & 	 nrmG &	 time & 	 its & 	 nrmG &	 time & 	 its & 	 nrmG &	 time  \\ \hline
  2000 & 	 1204 & 	 1.0e-6 &	 4.9 & 	 52(21) & 	 4.1e-7 &	 6.8 & 	 251(24) & 	 7.9e-7 &	 19.0\\ \hline 
  3000 & 	 1648 & 	 1.2e-7 &	 11.0 & 	 19(18) & 	 8.5e-7 &	 4.1 & 	 60(23) & 	 8.8e-7 &	 7.3\\ \hline 
  5000 & 	 1111 & 	 7.0e-7 &	 11.6 & 	 38(22) & 	 4.9e-7 &	 10.7 & 	 141(24) & 	 7.8e-7 &	 23.2\\ \hline 
  8000 & 	 1389 & 	 9.9e-7 &	 17.7 & 	 33(19) & 	 4.4e-7 &	 14.4 & 	 132(24) & 	 8.9e-7 &	 34.2\\ \hline 
  10000 & 	 1757 & 	 1.0e-6 &	 41.4 & 	 48(23) & 	 7.6e-7 &	 24.4 & 	 57(26) & 	 7.9e-7 &	 25.1\\ \hline 

\end{tabular}
\end{table}

\begin{table}[t] \caption{Numerical results on nonlinear eigenspace with fixed $(n, \alpha) = (5000,1000)$.} \label{num:noneig2}
\centering
\setlength{\tabcolsep}{1.5pt}
\begin{tabular}{|c|ccc|ccc|ccc|}
  \hline
& 	 \multicolumn{3}{c|}{GBB} & 	 \multicolumn{3}{c|}{ARNT}  & 	 \multicolumn{3}{c|}{RTR}\\ \hline
  $ p $ 	 & 	 its & 	 nrmG &	 time & 	 its & 	 nrmG &	 time & 	 its & 	 nrmG &	 time\\ \hline
 10 & 	 341 & 	 5.9e-7 &	 1.7 & 	 4(10) & 	 4.5e-7 &	 0.4 & 	 3(10) & 	 4.4e-7 &	 0.3\\ \hline 
 20 & 	 610 & 	 8.0e-7 &	 3.2 & 	 5(15) & 	 3.5e-7 &	 1.9 & 	 3(16) & 	 3.3e-7 &	 1.7\\ \hline 
 30 & 	 1111 & 	 7.0e-7 &	 9.4 & 	 38(22) & 	 4.9e-7 &	 8.7 & 	 141(24) & 	 7.8e-7 &	 19.6\\ \hline 
 50 & 	 3627 & 	 9.4e-7 &	 62.1 & 	 46(26) & 	 7.6e-7 &	 31.9 & 	 500(37) & 	 1.5e-3 &	 175.9\\ \hline  
\end{tabular}
\end{table}

\begin{table}[t] \caption{Numerical results on nonlinear eigenspace with fixed $(n, p) = (8000,30)$.} \label{num:noneig3}
\centering
\setlength{\tabcolsep}{1.5pt}
\begin{tabular}{|c|ccc|ccc|ccc|}
  \hline
& 	 \multicolumn{3}{c|}{GBB} & 	 \multicolumn{3}{c|}{ARNT}  & 	 \multicolumn{3}{c|}{RTR}\\ \hline
  $ \alpha $ 	 & 	 its & 	 nrmG &	 time & 	 its & 	 nrmG &	 time & 	 its & 	 nrmG &	 time\\ \hline
  1 & 	 194 & 	 6.8e-7 &	 2.4 & 	 3(28) & 	 8.3e-7 &	 1.9 & 	 3(28) & 	 6.0e-7 &	 1.7\\ \hline 
  10 & 	 299 & 	 3.7e-7 &	 3.7 & 	 3(36) & 	 4.5e-7 &	 2.3 & 	 3(36) & 	 3.6e-7 &	 2.0\\ \hline 
  100 & 	 572 & 	 9.1e-7 &	 7.3 & 	 3(26) & 	 5.2e-7 &	 5.0 & 	 3(26) & 	 5.2e-7 &	 4.8\\ \hline 
  1000 & 	 1389 & 	 9.9e-7 &	 18.2 & 	 33(19) & 	 4.4e-7 &	 13.1 & 	 132(24) & 	 8.9e-7 &	 33.1\\ \hline  
\end{tabular}
\end{table}

\begin{table} \caption{Numerical results on KS total energy minimization.} \label{tab:ks}
	\centering
    \setlength{\tabcolsep}{1.5pt}
	\begin{tabular}{|c|cccc||cccc|}
		\hline		
solver  & f & its  & nrmG & time & f  & its  & nrmG & time	\\ \hline

& 	 \multicolumn{4}{c||}{alanine} & \multicolumn{4}{c|}{al} \\ \hline 
SCF   & -6.1162e+01 &   14 &  3.9e-7  &     25.0 	 & -1.5784e+01 &  101 &  4.5e-2  &    146.9 \\ \hline 
 OptM  & -6.1162e+01 &   80 &  7.1e-7 &     25.5 	&  -1.5804e+01 & 1461 &  9.9e-7 &    391.1  \\ \hline 
TRQH   & -6.1162e+01 &    6(  16) &  6.5e-7 &     39.7 	 & -1.5804e+01 &   39(  16) &  9.6e-7 &    411.9 \\ \hline 
ARNT & -6.1162e+01 &    3(   9) & 	 3.9e-7 &	     24.4 	& -1.5804e+01 &    5( 113) & 	 3.5e-7 &	    196.4 \\ \hline 
RTR & -6.1162e+01 &    3(   9) & 	 4.1e-7 &	     24.3 	& -1.5804e+01 &    5( 108) & 	 9.9e-8 &	    188.9 \\ \hline 
& 	 \multicolumn{4}{c||}{benzene} & \multicolumn{4}{c|}{c12h26} \\ \hline 
SCF   & -3.7226e+01 &   13 &  4.0e-7  &     14.3 	 & -8.1536e+01 &   13 &  9.1e-7  &     30.2 \\ \hline 
 OptM  & -3.7226e+01 &   68 &  5.1e-7 &     13.4 	&  -8.1536e+01 &   89 &  8.8e-7 &     34.1  \\ \hline 
TRQH   & -3.7226e+01 &    6(  12) &  9.3e-7 &     19.2 	 & -8.1536e+01 &    7(  12) &  9.7e-7 &     50.0 \\ \hline 
ARNT & -3.7226e+01 &    3(  10) & 	 9.2e-8 &	     13.3 	& -8.1536e+01 &    3(  13) & 	 6.4e-7 &	     29.5 \\ \hline 
RTR & -3.7226e+01 &    3(  10) & 	 8.1e-8 &	     13.6 	& -8.1536e+01 &    3(  13) & 	 5.2e-7 &	     29.5 \\ \hline 
& 	 \multicolumn{4}{c||}{c2h6} & \multicolumn{4}{c|}{co2} \\ \hline 
SCF   & -1.4420e+01 &   10 &  6.8e-7  &      2.5 	 & -3.5124e+01 &   10 &  3.1e-7  &      2.6 \\ \hline 
 OptM  & -1.4420e+01 &   59 &  9.1e-7 &      2.6 	&  -3.5124e+01 &   59 &  5.2e-7 &      2.6  \\ \hline 
TRQH   & -1.4420e+01 &    6(  12) &  8.7e-7 &      4.0 	 & -3.5124e+01 &    6(  12) &  3.7e-7 &      3.9 \\ \hline 
ARNT & -1.4420e+01 &    3(   8) & 	 4.7e-7 &	      2.5 	& -3.5124e+01 &    3(   9) & 	 3.1e-7 &	      2.5 \\ \hline 
RTR & -1.4420e+01 &    3(   7) & 	 3.9e-7 &	      2.7 	& -3.5124e+01 &    3(  10) & 	 2.5e-7 &	      2.7 \\ \hline 
& 	 \multicolumn{4}{c||}{ctube661} & \multicolumn{4}{c|}{graphene16} \\ \hline 
SCF   & -1.3464e+02 &   16 &  3.1e-7  &     88.5 	 & -9.4028e+01 &  101 &  5.8e-4  &    160.0 \\ \hline 
 OptM  & -1.3464e+02 &  101 &  7.2e-7 &     93.0 	&  -9.4046e+01 &  187 &  8.5e-7 &     40.8  \\ \hline 
TRQH   & -1.3464e+02 &    6(  19) &  3.2e-7 &    138.5 	 & -9.4046e+01 &    8(  19) &  9.5e-7 &     70.3 \\ \hline 
ARNT & -1.3464e+02 &    3(  11) & 	 4.9e-7 &	     78.3 	& -9.4046e+01 &    3(  19) & 	 8.6e-7 &	     40.3 \\ \hline 
RTR & -1.3464e+02 &    3(  11) & 	 4.2e-7 &	     78.2 	& -9.4046e+01 &    3(  19) & 	 7.3e-7 &	     40.7 \\ \hline 
& 	 \multicolumn{4}{c||}{graphene30} & \multicolumn{4}{c|}{h2o} \\ \hline 
SCF   & -1.7358e+02 &  101 &  2.2e-3  &    860.6 	 & -1.6441e+01 &    9 &  1.4e-7  &      1.8 \\ \hline 
 OptM  & -1.7360e+02 &  378 &  6.5e-7 &    517.0 	&  -1.6441e+01 &   58 &  8.9e-7 &      2.0  \\ \hline 
TRQH   & -1.7360e+02 &   12(  38) &  8.6e-7 &    783.9 	 & -1.6441e+01 &    5(  38) &  8.4e-7 &      2.9 \\ \hline 
ARNT & -1.7360e+02 &    4(  33) & 	 2.5e-7 &	    446.8 	& -1.6441e+01 &    3(  11) & 	 3.9e-7 &	      1.8 \\ \hline 
RTR & -1.7360e+02 &  100(   4) & 	 2.3e-5 &	    828.8 	& -1.6441e+01 &    3(  11) & 	 3.1e-7 &	      2.1 \\ \hline 
& 	 \multicolumn{4}{c||}{hnco} & \multicolumn{4}{c|}{nic} \\ \hline 
SCF   & -2.8635e+01 &   12 &  3.5e-7  &      3.3 	 & -2.3544e+01 &   10 &  7.2e-7  &      1.2 \\ \hline 
 OptM  & -2.8635e+01 &  131 &  9.7e-7 &      5.6 	&  -2.3544e+01 &   63 &  9.9e-7 &      1.1  \\ \hline 
TRQH   & -2.8635e+01 &    7(  21) &  9.5e-7 &      6.9 	 & -2.3544e+01 &    8(  21) &  9.3e-7 &      2.3 \\ \hline 
ARNT & -2.8635e+01 &    3(  15) & 	 7.5e-7 &	      3.7 	& -2.3544e+01 &    3(   8) & 	 4.4e-7 &	      1.0 \\ \hline 
RTR & -2.8635e+01 &    3(  16) & 	 7.7e-7 &	      4.5 	& -2.3544e+01 &    3(   8) & 	 4.6e-7 &	      1.3 \\ \hline 
& 	 \multicolumn{4}{c||}{ptnio} & \multicolumn{4}{c|}{qdot} \\ \hline 
SCF   & -2.2679e+02 &   66 &  7.7e-7  &    146.2 	 & 2.7702e+01 &  101 &  3.4e-2  &     22.3 \\ \hline 
 OptM  & -2.2679e+02 &  495 &  5.3e-7 &    145.6 	&  2.7695e+01 & 2000 &  3.3e-6 &     70.8  \\ \hline 
TRQH   & -2.2679e+02 &   23(  39) &  9.3e-7 &    286.0 	 & 2.7695e+01 &   91(  39) &  9.9e-7 &    115.8 \\ \hline 
ARNT & -2.2679e+02 &    4(  52) & 	 6.9e-7 &	    132.4 	& 2.7695e+01 &   27(  65) & 	 7.1e-7 &	     64.5 \\ \hline 
RTR & -2.2679e+02 &    4(  46) & 	 8.5e-7 &	    122.5 	& 2.7695e+01 &   37(  68) & 	 4.0e-7 &	     83.3 \\ \hline 
%& 	 \multicolumn{4}{c||}{si2h4} & \multicolumn{4}{c|}{sih4} \\ \hline 
%SCF   & -6.3010e+00 &   11 &  8.6e-7  &      2.6 	 & -6.1769e+00 &    8 &  1.9e-7  &      1.4 \\ \hline 
% OptM  & -6.3010e+00 &   85 &  9.6e-7 &      3.4 	&  -6.1769e+00 &   45 &  5.4e-7 &      1.6  \\ \hline 
%TRQH   & -6.3010e+00 &    6(  16) &  9.5e-7 &      4.7 	 & -6.1769e+00 &    5(  16) &  9.7e-7 &      2.5 \\ \hline 
%ARNT & -6.3010e+00 &    3(  10) & 	 3.6e-7 &	      2.9 	& -6.1769e+00 &    3(   7) & 	 5.7e-7 &	      1.6 \\ \hline 
%RTR & -6.3010e+00 &    3(  10) & 	 2.6e-7 &	      3.2 	& -6.1769e+00 &    3(   7) & 	 5.8e-7 &	      1.9 \\ \hline 

	\end{tabular}
	
\end{table}

A summary of the computational results is given in Table \ref{tab:ks}. All
algorithms reach the same objective function value when the gradient norm
criterion is satisfied. ARNT and RTR take a small number of outer iterations to
converge and often exhibit a fast convergence rate. In particular, ARNT tends to be
more efficient than other algorithms on ``graphene30'' and ``qdot''. It can be even
faster than SCF when SCF works well. ARNT also outperforms TRQH. This shows
that the accuracy of solving the subproblem \cref{eq:tr-sub} is indeed
important. 

\subsection{Bose-Einstein Condensates (BEC)} %\cite{wu2015regularized}
The total energy in BEC is defined as
\[ E(\psi) = \int_{\R^d} \left[ \half |\nabla \psi(\xb)|^2 + V(\xb)|\psi(\xb)|^2 + \frac{\beta}{2}|\psi(\xb)|^4 - \Omega \bar{\psi}(\xb)L_z(\xb)\right] d\xb,  \]
where $\xb \in \R^d$ is the spatial coordinate vector, $\bar{\psi}$ denotes the
complex conjugate of $\psi$, $L_z = -i(x\partial - y\partial x),\, V(x)$ is an external trapping potential, and $\beta, \Omega$ are given constants.
%Then the ground state of a BEC problem is defined as the following optimization problem:
%\be \label{prob:bec1} \psi_g = \arg \min_{\psi \in S} \quad E(\psi),\ee
%where the spherical constraint $S$ is defined as
%\be \label{prob:bec2} S = \left \{ \psi \mid E(\psi) < \infty, \, \int_{\R^d} |\psi(\xb)|^2 dx = 1 \right\}.\ee
Using a suitable discretization, e.g., such as finite differences or the sine
pseudospectral and Fourier pseudospectral (FP) method, we can reformulate the BEC problem as follows
\[ \min_{x \in \C^M} ~ f(x) := \half x^*Ax + {\frac{\beta}{2}}\sum_{j =1}^M |x_j|^4, \quad \st \quad \|x\|_2 = 1, \]
where $M \in \N$, {$\beta$ is a given real constant}, and $A \in \C^{M\times M}$ is a Hermitian matrix.

In this numerical experiment, we again use the Wirtinger calculus to calculate the complex gradient and Hessian of the objective function. We stop GBB, ARNT, RTR, and TRQH (the Newton method
in \cite{wu2015regularized}) when the gradient norm
is less than $10^{-4}$ or the maximum number of iterations is reached. For TRQH,
the stopping criterion 
\[ \|x^{k+1} - x^{k}\|_\infty \leq \epsilon_x \]
is added for some small constant $\epsilon_x$ since TRQH often does not converge
under the gradient norm criterion. We take $d = 2$ and test two different
potential functions
\[ V_1(x,y) = \half x^2 + \half y^2, \quad \text{and} \quad V_2(x,y) = -0.1(x^2 + y^2) +0.3((x^2 + y^2)/2)^2. \]
The BEC problem is discretized by FP on the bounded domain $(-16,16)^2$ with $\beta =
500, 1000$ and different values of $\Omega$ ranging from $0$ to $0.95$. Under
the same settings as in \cite[Section 4.3]{wu2015regularized}, we use the mesh
refinement procedure with the coarse meshes  $(2^4+1) \times (2^4+1), (2^5+1) \times
(2^5+1), \ldots, (2^7+1) \times (2^7+1)$  to gradually obtain an initial solution
point on the finest mesh $(2^8+1) \times (2^8+1)$. For a fair
comparison, all algorithms are tested with mesh refinement and start from
the same initial point on the coarsest mesh with 
$\phi(x,y) = \frac{(1-\Omega)\phi_1(x,y) + \Omega \phi_2(x,y)}{\|(1-\Omega)\phi_1(x,y) + \Omega \phi_2(x,y)\|}$ and $\phi_1(x,y) = \frac{1}{\sqrt{\pi}} e^{-(x^2 +y^2)/2},\, \phi_2(x,y) = \frac{x+iy}{\sqrt{\pi}}e^{-(x^2+y^2)}/2$.

A summary of the results is presented in the Tables \ref{tab:bec1}--\ref{tab:bec2} for the potential functions $V_1$ and $V_2$, respectively. The
parameter $\epsilon_x$ for TRQH is set to  $10^{-8}$ and $10^{-7}$ in these two
cases. The tables show that GBB does not to converge
within $10000$ steps in several cases. TRQH usually performs worse than ARNT in terms of accuracy and time
except in the cases $\beta = 1000$ with $\Omega = 0.95$ in Table \ref{tab:bec1}
where ARNT finds a point with a smaller objective function value. ARNT
performs not
worse than  RTR in most experiments. %In Table \ref{tab:bec2}, we set $\epsilon_x = 10^{-7}$ in \eqref{eq:term2}. A similar results confirm our ARNT is comparable to the other algorithms.

\begin{table} \caption{Numerical results on BEC with the potential function $V_1(x,y)$} \label{tab:bec1}
	\centering
	\setlength{\tabcolsep}{1.5pt}
	\begin{tabular}{|c|cccc||cccc|}
		\hline
		
		solver & 	f  & its  & nrmG & time  & 	f  & its  & nrmG & time	\\ \hline
\multicolumn{9}{|c|}{$\beta$ =500}	\\ \hline	
& 	 \multicolumn{4}{c||}{$ \Omega =0.00$} & \multicolumn{4}{c|}{$ \Omega =0.25$} \\ \hline 
OptM  &         8.5118 &    58 &  6.6e-5 &      1.4 	&         8.5106&   103 &  9.7e-5 &     12.3  \\ \hline 
TRQH   &         8.5118 &     4(  17) &  1.5e-4 &      2.0 	 &         8.5106 &     5(  22) &  1.9e-4 &     21.9 \\ \hline 
ARNT &         8.5118 &   3(  24) & 	 1.2e-5 &	      1.5 	&         8.5106 &   4(  53) & 	 1.6e-5 &	     17.7 \\ \hline 
RTR &         8.5118 &   3(  25) & 	 1.3e-5 &	      1.5 	&         8.5106 &   3(  23) & 	 6.0e-5 &	     15.1 \\ \hline 
& 	 \multicolumn{4}{c||}{$ \Omega =0.50$} & \multicolumn{4}{c|}{$ \Omega =0.60$} \\ \hline 
OptM  &         8.0246 &   276 &  9.0e-5 &     32.3 	&         7.5890&   301 &  1.0e-4 &     19.9  \\ \hline 
TRQH   &         8.0246 &     5(  53) &  2.0e-4 &     60.7 	 &         7.5890 &     5(  60) &  1.9e-4 &     35.4 \\ \hline 
ARNT &         8.0197 &   3(  62) & 	 6.5e-5 &	     21.3 	&         7.5890 &   3(  67) & 	 5.7e-5 &	     22.1 \\ \hline 
RTR &         8.0246 &  11( 113) & 	 1.0e-4 &	     56.5 	&         7.5890 &   3(  61) & 	 5.2e-5 &	     23.8 \\ \hline 
& 	 \multicolumn{4}{c||}{$ \Omega =0.70$} & \multicolumn{4}{c|}{$ \Omega =0.80$} \\ \hline 
OptM  &         6.9731 &   340 &  1.0e-4 &     56.3 	&         6.1016&   386 &  1.0e-4 &     65.2  \\ \hline 
TRQH   &         6.9731 &     7(  55) &  2.0e-4 &     61.6 	 &         6.1016 &     5(  64) &  2.0e-4 &     83.1 \\ \hline 
ARNT &         6.9731 &  10(  99) & 	 8.7e-5 &	     44.4 	&         6.1016 &  10( 104) & 	 8.7e-5 &	     70.6 \\ \hline 
RTR &         6.9731 &  99( 118) & 	 9.3e-5 &	    234.2 	&         6.1016 &  18( 130) & 	 7.7e-5 &	    130.1 \\ \hline 
& 	 \multicolumn{4}{c||}{$ \Omega =0.90$} & \multicolumn{4}{c|}{$ \Omega =0.95$} \\ \hline 
OptM  &         4.7784 &  10000 &  1.2e-3 &    243.6 	&         3.7419&  10000 &  7.4e-4 &    241.6  \\ \hline 
TRQH   &         4.7778 &   277( 176) &  2.0e-4 &   1090.9 	 &         3.7416 &   363( 181) &  2.0e-4 &   1185.1 \\ \hline 
ARNT &         4.7777 & 147( 132) & 	 9.6e-5 &	    413.3 	&         3.7414 & 500( 147) & 	 2.6e-4 &	   1204.0 \\ \hline 
RTR &         4.7777 & 500( 147) & 	 8.5e-4 &	   1250.4 	&         3.7415 & 500( 172) & 	 9.7e-4 &	   1419.0 \\ \hline 

\multicolumn{9}{|c|}{$\beta$ =1000}	\\ \hline

& 	 \multicolumn{4}{c||}{$ \Omega =0.00$} & \multicolumn{4}{c|}{$ \Omega =0.25$} \\ \hline 
OptM  &        11.9718 &    76 &  4.6e-5 &      3.0 	&        11.9266&   358 &  9.9e-5 &     40.2  \\ \hline 
TRQH   &        11.9718 &     4(  15) &  1.0e-4 &      1.5 	 &        11.9266 &     4(  50) &  1.7e-4 &     44.3 \\ \hline 
ARNT &        11.9718 &   3(  16) & 	 3.1e-5 &	      0.9 	&        11.9266 &  15(  70) & 	 2.5e-5 &	     40.9 \\ \hline 
RTR &        11.9718 &   3(  16) & 	 3.8e-5 &	      0.8 	&        11.9266 &  15(  70) & 	 8.7e-5 &	     46.4 \\ \hline 
& 	 \multicolumn{4}{c||}{$ \Omega =0.50$} & \multicolumn{4}{c|}{$ \Omega =0.60$} \\ \hline 
OptM  &        11.1054 &   396 &  1.0e-4 &     32.6 	&        10.4392&  5524 &  1.0e-4 &    140.4  \\ \hline 
TRQH   &        11.1326 &     6(  53) &  2.0e-4 &     36.3 	 &        10.4437 &     9(  98) &  2.0e-4 &     92.8 \\ \hline 
ARNT &        11.1326 &  20(  66) & 	 5.9e-5 &	     36.8 	&        10.4392 &  20(  73) & 	 7.6e-5 &	     77.9 \\ \hline 
RTR &        11.1326 &  32(  78) & 	 5.8e-5 &	     68.9 	&        10.4392 &  93(  80) & 	 9.8e-5 &	    187.6 \\ \hline 
& 	 \multicolumn{4}{c||}{$ \Omega =0.70$} & \multicolumn{4}{c|}{$ \Omega =0.80$} \\ \hline 
OptM  &         9.5283 &   990 &  1.0e-4 &     63.7 	&         8.2627&  10000 &  5.5e-4 &    231.9  \\ \hline 
TRQH   &         9.5301 &   102( 156) &  2.0e-4 &    404.1 	 &         8.2610 &   453( 177) &  2.0e-4 &   1427.0 \\ \hline 
ARNT &         9.5301 &  60(  81) & 	 9.3e-5 &	    140.4 	&         8.2610 & 202( 105) & 	 6.7e-5 &	    412.7 \\ \hline 
RTR &         9.5301 & 293(  91) & 	 8.6e-5 &	    478.8 	&         8.2610 & 500( 113) & 	 5.5e-4 &	    972.7 \\ \hline 
& 	 \multicolumn{4}{c||}{$ \Omega =0.90$} & \multicolumn{4}{c|}{$ \Omega =0.95$} \\ \hline 
OptM  &         6.3611 &  10000 &  3.0e-3 &    230.8 	&         4.8856&  10000 &  5.2e-4 &    241.4  \\ \hline 
TRQH   &         6.3607 &   142( 170) &  2.0e-4 &    595.6 	 &         4.8831 &   172( 178) &  2.0e-4 &    708.1 \\ \hline 
ARNT &         6.3607 & 500( 110) & 	 2.8e-3 &	    931.5 	&         4.8822 & 500( 121) & 	 1.5e-3 &	   1015.8 \\ \hline 
RTR &         6.3607 & 500( 122) & 	 7.6e-4 &	   1010.8 	&         4.8823 & 500( 137) & 	 1.9e-3 &	   1103.8 \\ \hline 
		   		
	\end{tabular}
	
\end{table}

\begin{table} \caption{Numerical results on BEC with the potential function $V_2(x,y)$} \label{tab:bec2}
	\centering
	\setlength{\tabcolsep}{1.5pt}
	\begin{tabular}{|c|cccc||cccc|}
		\hline
		
		solver & 	f  & its  & nrmG & time  & 	f  & its  & nrmG & time	\\ \hline
\multicolumn{9}{|c|}{$\beta$ =500}	\\ \hline	
& 	 \multicolumn{4}{c||}{$ \Omega =0.00$} & \multicolumn{4}{c|}{$ \Omega =0.25$} \\ \hline 
OptM  &         9.3849 &   108 &  7.6e-5 &      2.8 	&         9.3849&   118 &  7.4e-5 &      5.6  \\ \hline 
TRQH   &         9.3849 &     4(  21) &  1.9e-4 &      2.6 	 &         9.3849 &     5(  17) &  1.5e-4 &      5.8 \\ \hline 
ARNT &         9.3849 &   3(  25) & 	 5.5e-5 &	      1.7 	&         9.3849 &   3(  26) & 	 4.6e-5 &	      3.6 \\ \hline 
RTR &         9.3849 &   3(  27) & 	 5.5e-5 &	      1.8 	&         9.3849 &   3(  27) & 	 5.6e-5 &	      3.7 \\ \hline 
& 	 \multicolumn{4}{c||}{$ \Omega =0.50$} & \multicolumn{4}{c|}{$ \Omega =0.60$} \\ \hline 
OptM  &         9.2053 &   142 &  9.2e-5 &     30.2 	&         9.1053&   132 &  9.8e-5 &     25.3  \\ \hline 
TRQH   &         9.2053 &     5(  23) &  1.4e-4 &     24.4 	 &         9.1053 &     5(  20) &  1.5e-4 &     20.4 \\ \hline 
ARNT &         9.2053 &   3(  27) & 	 8.4e-5 &	     19.5 	&         9.1053 &   3(  28) & 	 7.5e-5 &	     11.5 \\ \hline 
RTR &         9.2053 &   3(  29) & 	 8.3e-5 &	     20.2	&         9.1053 &   3(  30) & 	 8.5e-5 &	     19.5 \\ \hline 
& 	 \multicolumn{4}{c||}{$ \Omega =0.70$} & \multicolumn{4}{c|}{$ \Omega =0.80$} \\ \hline 
OptM  &         8.8307 &   264 &  8.3e-5 &     26.0 	&         8.4819&   374 &  8.2e-5 &     46.9  \\ \hline 
TRQH   &         8.8307 &     5(  47) &  1.8e-4 &     24.0 	 &         8.4819 &     5(  64) &  2.0e-4 &     38.7 \\ \hline 
ARNT &         8.8307 &   5(  95) & 	 3.8e-5 &	     26.0 	&         8.4819 &   3(  76) & 	 8.8e-5 &	     30.2 \\ \hline 
RTR &         8.8307 &   3(  87) & 	 7.6e-5 &	     47.2 	&         8.4819 &   3(  94) & 	 7.4e-5 &	    26.4 \\ \hline 
& 	 \multicolumn{4}{c||}{$ \Omega =0.90$} & \multicolumn{4}{c|}{$ \Omega =0.95$} \\ \hline 
OptM  &         8.0659 &   426 &  1.0e-4 &     75.2 	&         7.7455&  9508 &  9.9e-5 &    244.4  \\ \hline 
TRQH   &         8.0659 &     5(  94) &  1.4e-4 &    124.1 	 &         7.7455 &    21( 155) &  2.0e-4 &    254.8 \\ \hline 
ARNT &         8.0659 &   3(  99) & 	 9.0e-5 &	     56.4 	&         7.7455 &  30( 192) & 	 2.3e-5 &	    171.7 \\ \hline 
RTR &         8.0659 &   3( 108) & 	 8.7e-5 &	    107.6 	&         7.7455 &  20( 270) & 	 9.7e-5 &	    257.4 \\ \hline 
 
\multicolumn{9}{|c|}{$\beta$ =1000}	\\ \hline

& 	 \multicolumn{4}{c||}{$ \Omega =0.00$} & \multicolumn{4}{c|}{$ \Omega =0.25$} \\ \hline 
OptM  &        14.9351 &   158 &  9.0e-5 &      5.1 	&        14.9351&   113 &  9.5e-5 &      9.2  \\ \hline 
TRQH   &        14.9351 &     5(  22) &  2.0e-4 &      3.1 	 &        14.9667 &    32( 160) &  2.0e-4 &    133.1 \\ \hline 
ARNT &        14.9351 &   3(  33) & 	 6.6e-5 &	      2.1 	&        14.9667 &  41( 131) & 	 7.4e-5 &	    105.2 \\ \hline 
RTR &        14.9351 &   3(  33) & 	 6.1e-5 &	      2.0 	&        14.9667 &  37( 137) & 	 7.9e-5 &	    118.5 \\ \hline 
& 	 \multicolumn{4}{c||}{$ \Omega =0.50$} & \multicolumn{4}{c|}{$ \Omega =0.60$} \\ \hline 
OptM  &        14.7167 &  1261 &  9.9e-5 &     68.6 	&        14.4704&  1128 &  1.0e-4 &     38.6  \\ \hline 
TRQH   &        14.7167 &    11( 123) &  2.0e-4 &     73.3 	 &        14.6167 &    13(  65) &  1.5e-4 &     58.1 \\ \hline 
ARNT &        14.7167 &  17( 127) & 	 8.0e-5 &	     66.8 	&        14.6167 &   7( 112) & 	 7.0e-5 &	     43.6 \\ \hline 
RTR &        14.7167 &  13( 136) & 	 7.3e-5 &	     72.2 	&        14.6167 &   3(  33) & 	 6.2e-5 &	     42.2 \\ \hline 
& 	 \multicolumn{4}{c||}{$ \Omega =0.70$} & \multicolumn{4}{c|}{$ \Omega =0.80$} \\ \hline 
OptM  &        14.2813 &   719 &  1.0e-4 &     47.8 	&        13.8647&  4382 &  1.0e-4 &    118.3  \\ \hline 
TRQH   &        14.5167 &     7(  31) &  1.9e-4 &     41.5 	 &        13.6368 &    42( 169) &  2.0e-4 &    283.4 \\ \hline 
ARNT &        14.5167 &   5( 104) & 	 9.9e-5 &	     38.2 	&        13.6561 &  39( 138) & 	 5.5e-5 &	    133.5 \\ \hline 
RTR &        14.5167 &   3(  33) & 	 9.3e-5 &	     33.2	&        13.6561 &  29( 153) & 	 9.6e-5 &	    144.9 \\ \hline 
& 	 \multicolumn{4}{c||}{$ \Omega =0.90$} & \multicolumn{4}{c|}{$ \Omega =0.95$} \\ \hline 
OptM  &        13.3733 &  5004 &  1.0e-4 &    166.9 	&        12.8180&  10000 &  3.2e-3 &    270.8  \\ \hline 
TRQH   &        13.3733 &     6( 108) &  1.9e-4 &    117.3 	 &        12.8048 &   423( 143) &  1.8e-4 &   1153.7 \\ \hline 
ARNT &        13.3733 &   8( 166) & 	 3.9e-5 &	     68.1 	&        12.8180 &  53( 167) & 	 6.8e-5 &	    191.8 \\ \hline 
RTR &        13.3733 &  12( 199) & 	 4.1e-5 &	    93.5 	&        12.8180 &  66( 250) & 	 9.8e-5 &	    339.9 \\ \hline

	\end{tabular}
	
\end{table}

\subsection{Low-Rank Matrix Completion}
 Given a partially observed matrix $A \in \R^{m\times n}$, we want to find the
lowest-rank matrix to fit $A$ on the known elements. This problem can be formulated as follows:
\bee \label{prob:lrmc}  \min_{X \in \R^{m\times n}}~f(X):= \half\|P_{\Omega}(X) - A\|_F^2 \quad
\st \quad  X \in \{ X \in \R^{m\times n}: \mathrm{rank}(X) = k \} \eee
where $P_\Omega : \R^{m\times n} \rightarrow \R^{m \times n}$, $P_\Omega(X)_{i,j} := X_{i,j}$ if $(i,j) \in \Omega$ and $P_\Omega(X)_{i,j} := 0$ if $(i,j) \notin \Omega$, is the projection onto $\Omega$ and $\Omega$ is a subset of $\{1,\ldots, m\} \times \{1,\ldots,n\}$. More details can be found in \cite{vandereycken2013low}.
%If the lowest rank is known, say $k$, the authors in \cite{vandereycken2013low} converted problem \eqref{prob:lrmc1} into the following equivalent form.  Since $\M_k$ is a smooth manifold. This problem can be solved by Riemannian optimization algorithms.

Similar to \cite{vandereycken2013low}, we construct random numerical examples as
follows. We first take two Gaussian random matrices $A_L, A_R \in \R^{n \times
k}$, then uniformly sample the index set $\Omega$
 for a given cardinality and set the matrix $A:= P_{\Omega}(A_LA_R^\top)$. Since
 the degrees of freedom in a nonsymmetric matrix of rank $k$ is given $k(2n-k)$,  we
 define the ratio $r_S = {(k(2n-k))^{-1}}|\Omega|$. % we will test the influence of the different ratios.
In this example, we only penalize $x - x_k$ on the known set $\Omega$ in the implementation of ARNT to reduce the computational costs. (I.e., the penalization term in the subproblem \cref{eq:mQ} is set to $\sigma_k \|P_{\Omega}(x- x_k)\|^2$).
 In the Tables \ref{num:lrmc-n}, \ref{num:lrmc-k} and \ref{num:lrmc-ratio}, we can see that ARNT and RTR perform better than GBB regardless whether the dimension $n$ and
rank $k$ are large or small. %When the sample measurement ratio $r_S$ is close to zero,
We often observe that ARNT tends to outperform RTR when  negative curvature is
 encountered. % which seems to be handled more efficiently by ARNT. 
 %Let us note that when $r_S$ is large (i.e., $r_S > 1$), then  ARNT and RTR perform comparably since in these cases usually  no negative curvature is encountered. 
%in this case the negative curvature is often encountered. This leads our ARNT algorithm to be practical in real problems when the measurements can not be easily obtained.

\begin{table} \caption{Numerical results on low rank matrix completion with
  the fixed $k=10, r_S = 0.8$ but different $n$.} \label{num:lrmc-n}
	\centering
	\setlength{\tabcolsep}{1.5pt}
	\begin{tabular}{|c|ccc|ccc|ccc|}
		\hline
		& 	 \multicolumn{3}{c|}{GBB} & 	 \multicolumn{3}{c|}{ARNT} & 	 \multicolumn{3}{c|}{RTR}\\ \hline
       $n$	 & 	 its & 	 nrmG &	 time & 	 its & 	 nrmG &	 time & 	 its & 	 nrmG &	 time\\ \hline
       1000 & 	 603 & 	 5.1e-7 &	 12.5 & 	 6(84) & 	 3.4e-7 &	 7.7 & 	 8(91) & 	 6.6e-7 &	 8.2\\ \hline 
       2000 & 	 570 & 	 9.2e-7 &	 43.9 & 	 5(72) & 	 8.9e-7 &	 23.6 & 	 8(86) & 	 6.2e-7 &	 28.2\\ \hline 
       4000 & 	 671 & 	 9.7e-7 &	 179.8 & 	 6(82) & 	 4.6e-7 &	 94.8 & 	 9(85) & 	 2.0e-7 &	 104.8\\ \hline 
       8000 & 	 666 & 	 9.8e-7 &	 694.2 & 	 5(104) & 	 5.2e-7 &	 320.1 & 	 8(130) & 	 5.4e-7 &	 394.5\\ \hline  
				
	\end{tabular}
	
\end{table}

\begin{table} \caption{Numerical results on low rank matrix completion with
  fixed $n=4000, r_S = 0.95$ but different $k$.} \label{num:lrmc-k}
	\centering
	\setlength{\tabcolsep}{1.5pt}
	\begin{tabular}{|c|ccc|ccc|ccc|ccc|}
		\hline
		
		& 	 \multicolumn{3}{c|}{GBB} & 	 \multicolumn{3}{c|}{ARNT} & 	 \multicolumn{3}{c|}{RTR}\\ \hline
       $k$ 	 & 	 its & 	 nrmG &	 time & 	 its & 	 nrmG &	 time & 	 its & 	 nrmG &	 time\\ \hline
        10 & 	 5252 & 	 1.0e-6 &	 1415.9 & 	 13(133) & 	 7.4e-7 &	 392.1 & 	 12(236) & 	 4.4e-7 &	 438.2\\ \hline 
        20 & 	 2126 & 	 1.0e-6 &	 600.8 & 	 7(125) & 	 3.9e-7 &	 269.5 & 	 9(195) & 	 2.3e-7 &	 315.9\\ \hline 
        30 & 	 1488 & 	 1.0e-6 &	 438.8 & 	 6(132) & 	 3.1e-7 &	 255.2 & 	 9(214) & 	 2.6e-7 &	 329.9\\ \hline 
        40 & 	 1010 & 	 9.3e-7 &	 311.4 & 	 5(103) & 	 1.1e-7 &	 220.5 & 	 5(103) & 	 1.1e-7 &	 219.4\\ \hline 
        50 & 	 1494 & 	 7.9e-7 &	 477.1 & 	 4(103) & 	 1.5e-7 &	 273.8 & 	 4(103) & 	 1.6e-7 &	 272.5\\ \hline 
        60 & 	 1398 & 	 9.9e-7 &	 480.4 & 	 4(110) & 	 5.7e-7 &	 313.3 & 	 4(114) & 	 5.7e-7 &	 315.2\\ \hline  
				
	\end{tabular}
	
\end{table}

\begin{table} \caption{Numerical results on low rank matrix completion with
  fixed $n=8000, k = 10$ but different $r_S$.} \label{num:lrmc-ratio}
	\centering
	\setlength{\tabcolsep}{1.5pt}
	\begin{tabular}{|c|ccc|ccc|ccc|ccc|}
		\hline
		
		& 	 \multicolumn{3}{c|}{GBB} & 	 \multicolumn{3}{c|}{ARNT} & 	 \multicolumn{3}{c|}{RTR}\\ \hline
       $r_S$ 	 & 	 its & 	 nrmG &	 time & 	 its & 	 nrmG &	 time & 	 its & 	 nrmG &	 time\\ \hline
       0.1 & 	 86 & 	 3.7e-7 &	 88.1 & 	 3(11) & 	 4.4e-7 &	 54.7 & 	 3(11) & 	 4.3e-7 &	 53.2\\ \hline 
       0.2 & 	 89 & 	 8.6e-7 &	 93.9 & 	 3(14) & 	 3.4e-7 &	 55.5 & 	 3(14) & 	 3.4e-7 &	 54.0\\ \hline 
       0.3 & 	 117 & 	 9.5e-7 &	 119.7 & 	 3(14) & 	 4.2e-7 &	 67.3 & 	 3(14) & 	 4.2e-7 &	 66.1\\ \hline 
       0.5 & 	 173 & 	 8.5e-7 &	 178.8 & 	 3(18) & 	 7.0e-7 &	 111.4 & 	 3(18) & 	 7.0e-7 &	 109.7\\ \hline 
       0.8 & 	 666 & 	 9.8e-7 &	 700.2 & 	 5(104) & 	 5.2e-7 &	 318.7 & 	 8(130) & 	 5.4e-7 &	 388.8\\ \hline

	\end{tabular}
	
\end{table}

\section{Conclusions}
In this paper, we propose a regularized Newton method for optimization problems on
Riemannian manifolds. We use a second-order approximation of the objective
function in the Euclidean space to form a sequence of quadratic subproblems
while keeping the manifold constraints. A modified Newton method is then developed and analyzed to solve the resulting subproblems. Based on a Steihaug-type CG method, we construct a specific search direction that can use negative curvature information of the Riemannian Hessian. We show that our method enjoys favorable convergence properties and converges with a locally superlinear rate. Numerical experiments are
performed on the nearest correlation matrix estimation, Kohn-Sham total energy
minimization, BEC, and low-rank matrix completion problems. The comparisons illustrate that our
proposed method is promising. In particular, it can often reach a certain level of accuracy faster than other
state-of-the-art algorithms. Our algorithm performs comparable to the Riemannian trust-region (RTR) method and usually achieves a better convergence rate once negative curvature is encountered. We should point out that our proposed algorithm can be further improved if a more specialized and efficient solver for the inner subproblem is available. % remark that the truncated Newton method is a general method to solve the subproblem. Once we develop other more efficient algorithms for subproblem, the faster convergence will be expected. 

\section*{Acknowledgements} We would like to thank Bo Jiang for the helpful discussion on optimization with orthogonality constraints.  %\setcitestyle{numbers}
%\bibsep=0.1cm
%\bibliographystyle{plainnat}
\bibliographystyle{siamplain}
\bibliography{optimization}

\begin{thebibliography}{10}

\bibitem{AbsilBakerGallivan2007}
{\sc P.-A. Absil, C.~G. Baker, and K.~A. Gallivan}, {\em Trust-region methods
  on {R}iemannian manifolds}, Found. Comput. Math., 7 (2007), pp.~303--330.

\bibitem{opt-manifold-book}
{\sc P.-A. Absil, R.~Mahony, and R.~Sepulchre}, {\em Optimization algorithms on
  matrix manifolds}, Princeton University Press, Princeton, NJ, 2008.

\bibitem{absil2013extrinsic}
{\sc P.-A. Absil, R.~Mahony, and J.~Trumpf}, {\em An extrinsic look at the
  riemannian hessian}, in Geometric science of information, Springer, 2013,
  pp.~361--368.

\bibitem{absil2012projection}
{\sc P.-A. Absil and J.~Malick}, {\em Projection-like retractions on matrix
  manifolds}, SIAM Journal on Optimization, 22 (2012), pp.~135--158.

\bibitem{boumal2016global}
{\sc N.~Boumal, P.-A. Absil, and C.~Cartis}, {\em Global rates of convergence
  for nonconvex optimization on manifolds}, arXiv preprint arXiv:1605.08101,
  (2016).

\bibitem{BouAbsCar16}
{\sc N.~Boumal, P.-A. Absil, and C.~Cartis}, {\em Global rates of convergence
  for nonconvex optimization on manifolds}.
\newblock https://arxiv.org/abs/1605.08101, 5 2016.

\bibitem{manopt}
{\sc N.~Boumal, B.~Mishra, P.-A. Absil, and R.~Sepulchre}, {\em {M}anopt, a
  {M}atlab toolbox for optimization on manifolds}, Journal of Machine Learning
  Research, 15 (2014), pp.~1455--1459, \url{http://www.manopt.org}.

\bibitem{byrd2016inexact}
{\sc R.~H. Byrd, J.~Nocedal, and F.~Oztoprak}, {\em An inexact successive
  quadratic approximation method for l-1 regularized optimization},
  Mathematical Programming, 157 (2016), pp.~375--396.

\bibitem{CGT}
{\sc C.~Cartis, N.~I.~M. Gould, and P.~L. Toint}, {\em Adaptive cubic
  regularisation methods for unconstrained optimization. {P}art {I}:
  motivation, convergence and numerical results}, Math. Program., 127 (2011),
  pp.~245--295.

\bibitem{duchi2011adaptive}
{\sc J.~Duchi, E.~Hazan, and Y.~Singer}, {\em Adaptive subgradient methods for
  online learning and stochastic optimization}, Journal of Machine Learning
  Research, 12 (2011), pp.~2121--2159.

\bibitem{edelman1998geometry}
{\sc A.~Edelman, T.~A. Arias, and S.~T. Smith}, {\em The geometry of algorithms
  with orthogonality constraints}, SIAM journal on Matrix Analysis and
  Applications, 20 (1998), pp.~303--353.

\bibitem{gabay1982minimizing}
{\sc D.~Gabay}, {\em Minimizing a differentiable function over a differential
  manifold}, Journal of Optimization Theory and Applications, 37 (1982),
  pp.~177--219.

\bibitem{gao2016new}
{\sc B.~Gao, X.~Liu, X.~Chen, and Y.~Yuan}, {\em A new first-order framework
  for orthogonal constrained optimization problems}, Optimization Online
  preprint,  (2016), pp.~09--5660.

\bibitem{Goodfellow-et-al-2016}
{\sc I.~Goodfellow, Y.~Bengio, and A.~Courville}, {\em Deep Learning}, MIT
  Press, 2016.
\newblock \url{http://www.deeplearningbook.org}.

\bibitem{GouLucRomToi00}
{\sc N.~I.~M. Gould, S.~Lucidi, M.~Roma, and P.~L. Toint}, {\em Exploiting
  negative curvature directions in linesearch methods for unconstrained
  optimization}, Optim. Methods Softw., 14 (2000), pp.~75--98.
\newblock International Conference on Nonlinear Programming and Variational
  Inequalities (Hong Kong, 1998).

\bibitem{hiriart2013convex}
{\sc J.-B. Hiriart-Urruty and C.~Lemar{\'e}chal}, {\em Convex analysis and
  minimization algorithms I: fundamentals}, vol.~305, Springer Science \&
  Business Media, 2013.

\bibitem{huang2016roptlib}
{\sc W.~Huang, P.~Absil, K.~Gallivan, and P.~Hand}, {\em Roptlib: an
  object-oriented c++ library for optimization on riemannian manifolds?}, tech.
  report, Technical Report FSU16-14, Florida State University, 2016.

\bibitem{huang2015riemannian}
{\sc W.~Huang, P.-A. Absil, and K.~A. Gallivan}, {\em A riemannian symmetric
  rank-one trust-region method}, Mathematical Programming, 150 (2015),
  pp.~179--216.

\bibitem{huang2015broyden}
{\sc W.~Huang, K.~A. Gallivan, and P.-A. Absil}, {\em A broyden class of
  quasi-{N}ewton methods for {R}iemannian optimization}, SIAM Journal on
  Optimization, 25 (2015), pp.~1660--1685.

\bibitem{IanPor17}
{\sc B.~Iannazzo and M.~Porcelli}, {\em The {R}iemannian {B}arzilai-{B}orwein
  method with nonmonotone line search and the matrix geometric mean
  computation}, IMA Journal of Numerical Analysis, 00 (2017), pp.~1--23.

\bibitem{jiang2015framework}
{\sc B.~Jiang and Y.-H. Dai}, {\em A framework of constraint preserving update
  schemes for optimization on stiefel manifold}, Mathematical Programming, 153
  (2015), pp.~535--575.

\bibitem{kressner2014low}
{\sc D.~Kressner, M.~Steinlechner, and B.~Vandereycken}, {\em Low-rank tensor
  completion by riemannian optimization}, BIT Numerical Mathematics, 54 (2014),
  pp.~447--468.

\bibitem{kreutz2009complex}
{\sc K.~Kreutz-Delgado}, {\em The complex gradient operator and the
  {CR}-calculus}.
\newblock https://arxiv.org/abs/0906.4835, 6 2009.

\bibitem{lai2014folding}
{\sc R.~Lai, Z.~Wen, W.~Yin, X.~Gu, and L.~M. Lui}, {\em Folding-free global
  conformal mapping for genus-0 surfaces by harmonic energy minimization},
  Journal of Scientific Computing, 58 (2014), pp.~705--725.

\bibitem{lee2014proximal}
{\sc J.~D. Lee, Y.~Sun, and M.~A. Saunders}, {\em Proximal newton-type methods
  for minimizing composite functions}, SIAM Journal on Optimization, 24 (2014),
  pp.~1420--1443.

\bibitem{NocedalWright06}
{\sc J.~Nocedal and S.~J. Wright}, {\em Numerical Optimization}, Springer
  Series in Operations Research and Financial Engineering, Springer, New York,
  second~ed., 2006.

\bibitem{Qi11}
{\sc C.~Qi}, {\em Numerical optimization methods on {R}iemannian manifolds},
  {P}h{D} {D}issertation, Florida State University, 2011.

\bibitem{ring2012optimization}
{\sc W.~Ring and B.~Wirth}, {\em Optimization methods on {R}iemannian manifolds
  and their application to shape space}, SIAM Journal on Optimization, 22
  (2012), pp.~596--627.

\bibitem{smith1994optimization}
{\sc S.~T. Smith}, {\em Optimization techniques on {R}iemannian manifolds},
  Fields Institute Communications, 3 (1994).

\bibitem{Ste83}
{\sc T.~Steihaug}, {\em The conjugate gradient method and trust regions in
  large scale optimization}, SIAM J. Numer. Anal., 20 (1983), pp.~626--637.

\bibitem{udriste1994convex}
{\sc C.~Udriste}, {\em Convex functions and optimization methods on
  {R}iemannian manifolds}, vol.~297, Springer Science \& Business Media, 1994.

\bibitem{ulbrich2015proximal}
{\sc M.~Ulbrich, Z.~Wen, C.~Yang, D.~Kl\"{o}ckner, and Z.~Lu}, {\em A proximal
  gradient method for ensemble density functional theory}, SIAM Journal on
  Scientific Computing, 37 (2015), pp.~A1975--A2002.

\bibitem{vandereycken2013low}
{\sc B.~Vandereycken}, {\em Low-rank matrix completion by {R}iemannian
  optimization}, SIAM Journal on Optimization, 23 (2013), pp.~1214--1236.

\bibitem{wen2013adaptive}
{\sc Z.~Wen, A.~Milzarek, M.~Ulbrich, and H.~Zhang}, {\em Adaptive regularized
  self-consistent field iteration with exact hessian for electronic structure
  calculation}, SIAM Journal on Scientific Computing, 35 (2013),
  pp.~A1299--A1324.

\bibitem{wen2013feasible}
{\sc Z.~Wen and W.~Yin}, {\em A feasible method for optimization with
  orthogonality constraints}, Mathematical Programming, 142 (2013),
  pp.~397--434.

\bibitem{wright1999numerical}
{\sc S.~Wright and J.~Nocedal}, {\em Numerical optimization}, Springer Science,
  35 (1999), pp.~67--68.

\bibitem{wu2015regularized}
{\sc X.~Wu, Z.~Wen, and W.~Bao}, {\em A regularized newton method for computing
  ground states of bose-einstein condensates}, arXiv preprint arXiv:1504.02891,
   (2015).

\bibitem{YangMezaLeeWang2009}
{\sc C.~Yang, J.~C. Meza, B.~Lee, and L.-W. Wang}, {\em {\rm KSSOLV}---a
  {MATLAB} toolbox for solving the {Kohn-Sham} equations}, ACM Trans. Math.
  Softw., 36 (2009), pp.~1--35.

\bibitem{yang2014optimality}
{\sc W.~H. Yang, L.-H. Zhang, and R.~Song}, {\em Optimality conditions for the
  nonlinear programming problems on {R}iemannian manifolds}, Pacific Journal of
  Optimization, 10 (2014), pp.~415--434.

\bibitem{yang2007globally}
{\sc Y.~Yang}, {\em Globally convergent optimization algorithms on {R}iemannian
  manifolds: Uniform framework for unconstrained and constrained optimization},
  Journal of Optimization Theory and Applications, 132 (2007), pp.~245--265.

\bibitem{ZhaHag04}
{\sc H.~Zhang and W.~W. Hager}, {\em A nonmonotone line search technique and
  its application to unconstrained optimization}, SIAM J. Optim., 14 (2004),
  pp.~1043--1056.

\bibitem{zhang2014gradient}
{\sc X.~Zhang, J.~Zhu, Z.~Wen, and A.~Zhou}, {\em Gradient type optimization
  methods for electronic structure calculations}, SIAM Journal on Scientific
  Computing, 36 (2014), pp.~C265--C289.

\end{thebibliography}

\end{document}